\newtheorem{thm}{Theorem}[section]
\newtheorem{lemma}[thm]{Lemma}
\newtheorem{cor}[thm]{Corollary}
\newtheorem{prop}[thm]{Proposition}
{}
{}
\theoremstyle{remark}
\newtheorem{definition}[thm]{Definition}
\newtheorem{remark}[thm]{Remark}
\newtheorem*{definition*}{Definition}
\newtheorem*{remark*}{Remark}
\newcommand{\bY}{{\bf Y}}
\renewcommand{\xi}{{\theta}}
\def\cC{{\mathcal C}}
\def\cX{{\mathcal X}}
\def\cP{{\mathcal P}}
\def\cA{{\mathcal A}}
\def\cB{{\mathcal B}}
\def\Z{{\mathbb Z}}
\def\H{{\mathbb H}}
\def\diam{\operatorname{diam}}
\begin{document}

\title{Acylindrical actions on projection complexes} \author[M. Bestvina, K. Bromberg, K. Fujiwara and A. Sisto]{Mladen
  Bestvina, Ken Bromberg, Koji Fujiwara and Alessandro
  Sisto}

  \thanks{The first two authors gratefully acknowledge the
    support by the National Science Foundation. The third author is
    supported in part by Grant-in-Aid for Scientific Research
    (No. 19340013). All four authors were supported by the National
    Science Foundation under Grant No. DMS-1440140 while the authors
    were in residence at the Mathematical Sciences Research Institute
    in Berkeley, California, during the Fall 2016 semester.}
    
\date{\today}

\begin{abstract}
 We simplify the construction of projection complexes from \cite{BBF}. To do so, we introduce a sharper version of the Behrstock inequality, and show that it can always be enforced. Furthermore, we use the new setup to prove acylindricity results for the action on the projection complexes.
 
 We also treat quasi-trees of metric spaces associated to projection complexes, and prove an acylindricity criterion in that context as well.
\end{abstract}

\maketitle

\section{Introduction}
In \cite{BBF} an axiomatic setup was given for showing that certain
groups act on quasi-trees. A typical example arises when $G =
\pi_1(\Sigma)$ where $\Sigma$ is a closed, hyperbolic surface. One
then takes a simple, closed geodesic on $\Sigma$ and lets $\bY$ be the
set of components of the pre-image of the geodesic in the universal cover
$\H^2$. Given distinct geodesics $X,Y \in \bY$ we let $\pi_Y(X)$ be the nearest
point projection of $X$ to $Y$ and observe that the diameters of these
sets will be uniformly bounded as $X$ and $Y$ vary through $\bY$.
More interestingly, if we have distinct elements $X,Y,Z \in \bY$ and
the projections $\pi_Y(X)$ and $\pi_Y(Z)$ are far apart on $Y$ then
the projections $\pi_Z(X)$ and $\pi_Z(Y)$ will be coarsely the same on
$Z$ (meaning that the diameter of the union is uniformly bounded). One
then perturbs the projections up to finite Hausdorff distance in a certain way and builds the {\em projection complex} $\cP_K(\bY)$ with vertex set
$\bY$ by fixing a large constant $K$ and adding an edge between
distinct vertices $X$ and $Z$ if the diameter of $\pi_Y(X) \cup
\pi_Y(Z)$ is at most $K$ for all $Y \in \bY\backslash\{X,Z\}$. The
central result of \cite{BBF} is that $\cP_K(\bY)$ is a quasi-tree, that is to say it is quasi-isometric
to a tree. In certain situations, the perturbation of the projections is necessary.

One of the technical challenges of \cite{BBF} is that when
the
diameter of $\pi_Y(X) \cup \pi_Y(Z)$ is large the projections
$\pi_Z(X)$ and $\pi_Z(Y)$ are coarsely equal, but are not exactly
equal. This causes problems in induction arguments, because of constants that might get worse at every step. We will see here that by assuming equality (instead of just
coarse equality) the proof that the projection complex is a quasi-tree (Theorem \ref{quasi-tree})
vastly simplifies. Unfortunately, in most naturally occurring
situations we do not have equality. In the second part of the paper we
introduce the notion of a {\em forcing sequence} and use it to show
that the projection maps can be modified (coarsely) so that we have
the desired equality (Theorem \ref{thm:forcing}). In this way one can replace the work of section
3 of \cite{BBF} with the much simpler arguments in this paper.

Besides simplifying the approach from \cite{BBF}, the new setup allows us to obtain acylindricity results. The action of a group $G$ on a metric space $\cX$ is {\em
  acylindrical} if for all $D>0$ there exist $L>0$ and $B>0$ such that
if $x,y \in \cX$ and $d_\cX(x,y) \ge L$ then there are at most $B$
elements $g\in G$ with $d_\cX(x, gx) \le D$ and $d_\cX(y, gy) \le D$. Another improvement of this paper is that, under some simple conditions, it is straightforward to show that the $G$-action on the projection complex will be acylindrical, see Theorem \ref{acylindrical}. We summarize the results described so far in the following statement (the condition for acylindricity in Theorem \ref{acylindrical} is less restrictive).

\begin{thm}[Theorems \ref{thm:forcing}, \ref{quasi-tree}, \ref{acylindrical}]
Assume that $\bY=\{(Y, \rho_Y)\}$ is a collection of metric spaces with projection distances $\{d_Y\}$ satisfying axioms (P0) - (P4) from Section \ref{sec:forcing}. After perturbing the projections as in Theorem \ref{thm:forcing}, pick a constant $K$ and consider the complex $\cP_K(\bY)$ with vertex set $\bY$ and vertices $X,Z$ connected by an edge if $d_Y(X,Z)>K$. If $K$ is sufficiently large, then $\cP_K(\bY)$ is a quasi-tree.

Moreover, if the group $G$ acts on $\bY$ preserving $d_Y$, then $G$ acts by isometries on $\cP_K(\bY)$. If in addition there exist $B$ and $N$ so that the common stabilizer of any collection of $N$ elements of $\bY$ has cardinality at most $B$, then the action is acylindrical. 
\end{thm}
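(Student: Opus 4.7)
The plan is to handle the three assertions in order: first perform the forcing construction to promote the Behrstock-type inequality to an exact equality, then exploit the resulting rigidity to establish the quasi-tree property, and finally deduce acylindricity from the structure of the standard paths together with the stabilizer hypothesis.

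For the forcing statement (Theorem \ref{thm:forcing}), starting from axioms (P0)--(P4) I would modify each $d_Y$ to a new projection distance within bounded additive distance of the original, so that whenever $d_Y(X,Z)$ exceeds a fixed threshold one has $d_Z(X,Y)=0$ exactly, not merely coarsely. The key device will be the forcing sequence: for each triple $X,Y,Z$ with $d_Y(X,Z)$ large, one lists the intermediate $W\in\bY$ on which $X$ and $Z$ also project far from $Y$, and uses this list to reset projection values in a coherent way. The main technical obstacle here is to show that this readjustment is well-defined (it does not cascade) and that the modified projections continue to satisfy (P0)--(P4); this is where the bulk of the real work should live.

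For the quasi-tree step (Theorem \ref{quasi-tree}), with exact equality in hand I would define $\bY_K(X,Z) = \{Y\in\bY : d_Y(X,Z) > K\}$ and use the sharper axioms to linearly order this set by $d_X(Z,\cdot)$, producing a natural ``standard path'' from $X$ to $Z$ in $\cP_K(\bY)$ by connecting consecutive vertices. Because equality now replaces coarse equality, the inductive arguments comparing this standard path with arbitrary alternative paths no longer suffer from a degradation of constants at each step, in contrast to the situation in \cite{BBF}. Once $K$ is chosen above an explicit threshold determined by the axioms, the standard paths satisfy the bottleneck property, and the result follows.

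For acylindricity (Theorem \ref{acylindrical}), I would use the fact that $d_{\cP_K}(X,Z)$ grows with $|\bY_K(X,Z)|$, up to a uniform multiplicative error coming from the quasi-tree structure. Suppose $d_{\cP_K}(X,Z)\ge L$ and $g\in G$ satisfies $d_{\cP_K}(X,gX)\le D$ and $d_{\cP_K}(Z,gZ)\le D$. Then $g\bY_K(X,Z) = \bY_K(gX,gZ)$, and by the bottleneck property this set lies within uniform $\cP_K$-distance of the standard path from $X$ to $Z$. Comparing the two induced orderings and applying pigeonhole, I expect to produce $N$ elements of $\bY$ fixed setwise by $g$, provided $L$ is taken sufficiently large relative to $D$, $K$, and $N$; the stabilizer hypothesis then caps the number of such $g$ by $B$. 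The delicate point in this step is quantifying how much $g$ can shift the ordered standard path, which is precisely where the quasi-tree geometry earned in the previous step must be invoked.
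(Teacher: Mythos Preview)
Your three-part outline matches the paper's architecture, and the forcing and quasi-tree sketches are broadly in the right spirit (though two small points: the sharpened axiom is $d_Z(X,W)=d_Z(Y,W)$ for all $W$, not $d_Z(X,Y)=0$; and the total order on $\bY_K(X,Z)$ is \emph{not} given by $d_X(Z,\cdot)$---indeed (SP~3) forces $d_X(Z,Y_0)=d_X(Z,Y_1)$ for any $Y_0,Y_1\in\bY_K(X,Z)$, so that function is constant. The paper orders by the condition $d_{Y_0}(X,Y_1)>\theta$).

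The real gap is in your acylindricity argument. You write that for each individual $g$ with $d(X,gX),d(Z,gZ)\le D$ you will ``produce $N$ elements of $\bY$ fixed setwise by $g$'' and then bound the number of such $g$ by $B$. This cannot work: a single $g$ need not fix any element of $\bY$ at all. Think of a loxodromic isometry of a simplicial tree with translation length $1$; for $X,Z$ on its axis, $g$ moves both by $1$ yet fixes no vertex. The stabilizer hypothesis gives you nothing about a single such $g$.

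What the paper does instead is pigeonhole on the \emph{set} of such $g$. Using Lemma~\ref{divide} (not the bottleneck property) one shows that a central segment $I\subset\bY_K(X,Z)$ of length $\ge N$ is mapped by $g$ into a slightly larger segment $J\subset\bY_K(X,Z)\cap\bY_K(gX,gZ)$, and that $g|_I$ is an order-preserving shift by some integer in $[-D,D]$. There are only $2D+1$ possible shifts, so if two elements $g,g'$ induce the same shift then $g^{-1}g'$ fixes $I$ pointwise, hence lies in the common stabilizer of $N$ elements and there are at most $B$ choices for $g^{-1}g'$. This yields the bound $B(2D+1)$, not $B$. Your sketch has the right raw ingredients (comparing the two ordered standard paths), but the pigeonhole must be applied to pairs $g,g'$, not to a single $g$.
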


In \cite{DGO}, Dahmani-Guirardel-Osin introduced the notion of a {\em hyperbolically embedded subgroup}, a generalization of the concept of a relatively hyperbolic group. To construct groups with hyperbolically embedded subgroups Dahmani-Guirardel-Osin start with the group acting on a $\delta$-hyperbolic space with a {\em WPD element} and then use the projection complexes from \cite{BBF} to construct the hyperbolically embedded subgroup. In \cite{Osin}, a converse is proven; a group with an infinite hyperbolically embedded subgroup has an  acylindrical action on a $\delta$-hyperbolic space. In \cite{Balas}, Balasubramanya improves this last theorem by showing that the $\delta$-hyperbolic space can be taken to be a quasi-tree. In section 5, we derive Balasubramanya's theorem using our methods (Theorem \ref{balas}).

As in \cite{BBF} we can also build a {\em quasi-tree of metric spaces}. In the final section we give necessary and sufficient conditions for it to be a quasi-tree or a hyperbolic space (Theorem \ref{thm.CY.quasi.tree} and Corollary \ref{CY.quasi.tree}-(2)) and, more generally, prove that it is in a natural way a tree-graded space. Furthermore, we show that (under some natural conditions) the group action on the quasi-tree of metric spaces is also acylindrical (Theorem \ref{thm:complex_acylind}).

While many of the arguments here follow a similar outline to what is in \cite{BBF} (a notable exception being the part about forcing sequences) this paper is completely self-contained, with the only exception of the use of Manning's bottleneck criterion and, in the last section, its generalization from \cite{Hume:tree_graded}, and does not require any of the results from \cite{BBF}. 
We also note that the section on hyperbolically embedded subgroups does not require the forcing sequence technology from the previous section as the projection maps defined there satisfy the equality condition without modification.

An abridged version of this paper, only containing the proof that projection complexes are quasi-trees and the perturbation of the projection distances, is available on the authors' websites. This shorter version already contains most of the ideas and techniques that we use in this paper.

\section{Axioms}
Let $\bY$ be a set and for each $Y \in \bY$ assume that we have a function 
$$d_Y\colon \bY^2 \backslash \{(Y,Y)\} \to [0, \infty]$$
such that the following {\em strong projection axioms} are satisfied
for some $\theta\geq 0$:
\begin{enumerate}[{\bf (SP 1)}]
\item $d_Y(X,Z) = d_Y(Z,X)$;
\item $d_Y(X,Z) + d_Y(Z,W) \ge d_Y(X,W)$;
\item if $d_Y(X,Z) > \theta$ then $d_Z(X,W) = d_Z(Y,W)$ for all $W\in \bY\backslash\{Z\}$;
\item $d_Y(X,X) \leq \theta$;
\item $\#\{Y|d_Y(X,Z) > \theta\}$ is finite for all $X,Z \in \bY$.
\end{enumerate}
The constant $\theta$ is the {\em projection constant}. Note that we
allow $d_Y(X,Z) = \infty$.

The most important axiom is arguably (SP 3), which is a version of the
Behrstock inequality \cite{Behrstock}. As in \cite{BBF}, we will use
it to order certain subsets of $\bY$, the idea being that if
$d_Y(X,Z)$ is large, then $Y$ is between $X$ and $Z$. We note that
(SP 3) is in fact a more precise version of the Behrstock inequality
because the conclusion is an actual equality, not an approximate
one. This allows us to know the exact value of certain $d_Y$, and it
is the key to our much simpler proofs, compared to \cite{BBF}.

\begin{lemma}\label{triples}
(SP 3) and (SP 4) imply
$$\min\{d_Y(X,Z), d_Z(X,Y)\} \leq \theta$$
\end{lemma}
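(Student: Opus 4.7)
The plan is to argue by contradiction, assuming both $d_Y(X,Z) > \theta$ and $d_Z(X,Y) > \theta$, and deriving a violation of (SP 4). The key observation is that (SP 3) lets us substitute $X$ for $Y$ (or vice versa) as the second argument of the distance $d_Z$, and (SP 4) gives us a nontrivial bound precisely when those two arguments coincide.

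Concretely, I would first apply (SP 3) to the hypothesis $d_Y(X,Z) > \theta$. This yields the equality
\[
d_Z(X,W) \;=\; d_Z(Y,W) \qquad \text{for every } W \in \bY \setminus \{Z\}.
\]
The natural choice to exploit this is $W = Y$, which is allowed since $Y \neq Z$ (otherwise $d_Y(X,Z) = d_Z(X,Z)$ would not even be defined in a useful way, and in any case the claim is vacuous when $Y = Z$). Substituting gives
\[
d_Z(X,Y) \;=\; d_Z(Y,Y).
\]

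Now (SP 4) bounds the right-hand side: $d_Z(Y,Y) \leq \theta$. Therefore $d_Z(X,Y) \leq \theta$, contradicting the standing assumption that $d_Z(X,Y) > \theta$. This rules out the case that both quantities exceed $\theta$, so at least one of them must be at most $\theta$, proving the lemma.

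There is no real obstacle here; the only subtlety is remembering to take $W = Y$ in (SP 3) so that (SP 4) becomes applicable. The whole point of the sharper axiom (SP 3), as emphasized in the discussion preceding the lemma, is that it provides an honest equality rather than a coarse one, which is exactly what makes this one-line deduction possible.
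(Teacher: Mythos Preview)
Your proof is correct and essentially identical to the paper's: both apply (SP 3) with $W=Y$ under the assumption $d_Y(X,Z)>\theta$ to obtain $d_Z(X,Y)=d_Z(Y,Y)\leq\theta$ via (SP 4). The only cosmetic difference is that you phrase it as a contradiction, whereas the paper states it directly as ``if $d_Y(X,Z)>\theta$ then $d_Z(X,Y)\leq\theta$''.
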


\begin{proof}
If $d_Y(X,Z) > \theta$ then letting $W= Y$ in (SP 3) we have $d_Z(X,Y) = d_Z(Y,Y) \leq \theta$ by (SP 4).
\end{proof}

Define $\bY_K(X,Z)$ to be the collection of $Y \in \bY\backslash \{X,Z\}$ such that $d_Y(X,Z) > K$.

Lemma \ref{conditions} and Proposition \ref{order} below say that, for large enough $K$, $\bY_K(X,Z)$ can be totally ordered using the idea, as mentioned above, that if $d_Y(X,Z)$ is large then $Y$ is between $X$ and $Z$. The order has several equivalent characterizations, which is good for applications, and they are listed in Lemma \ref{conditions}:

\begin{lemma}\label{conditions}
For $Y_0,Y_1 \in \bY_{2\theta}(X,Z)$ the following conditions are equivalent:
\begin{enumerate}[(1)]
\item $d_{Y_0}(X, Y_1) > \theta$;
\item $d_{Y_1}(Y_0, W) = d_{Y_1}(X,W)$ for all $W \neq Y_1$;
\item $d_{Y_1}(X,Y_0) \leq \theta$;
\item $d_{Y_1}(Y_0,Z) > \theta$;
\item $d_{Y_0}(W,Y_1) = d_{Y_0}(W,Z)$ for all $W \neq Y_0$;
\item $d_{Y_0}(Y_1, Z) \leq \theta$.
\end{enumerate}
\end{lemma}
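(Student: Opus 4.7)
The plan is to close the equivalences in a single cycle $(1)\Rightarrow(2)\Rightarrow(3)\Rightarrow(4)\Rightarrow(5)\Rightarrow(6)\Rightarrow(1)$, using only the four axioms (SP 1)--(SP 4) together with the standing hypothesis that $d_{Y_0}(X,Z),d_{Y_1}(X,Z)>2\theta$. The idea is that (SP 3) produces the "equality" statements (2) and (5), that (SP 4) specializes them to the "small" inequalities (3) and (6), and that the triangle inequality (SP 2) together with the $2\theta$ hypothesis upgrades a "small" inequality on one side into a "large" inequality on the other side. Once one sees this pattern, each implication is a one-line verification.

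Concretely, for $(1)\Rightarrow(2)$ I would apply (SP 3) with the triple $(Y,X,Z)$ in the axiom taken to be $(Y_0,X,Y_1)$: the hypothesis $d_{Y_0}(X,Y_1)>\theta$ is exactly what (SP 3) needs, and its conclusion $d_{Y_1}(X,W)=d_{Y_1}(Y_0,W)$ is (2). For $(2)\Rightarrow(3)$, specialize $W=X$ in (2) and invoke (SP 4): $d_{Y_1}(Y_0,X)=d_{Y_1}(X,X)\leq\theta$. For $(3)\Rightarrow(4)$, use (SP 2) and the assumption $Y_1\in\bY_{2\theta}(X,Z)$:
\[
d_{Y_1}(X,Y_0)+d_{Y_1}(Y_0,Z)\geq d_{Y_1}(X,Z)>2\theta,
\]
so $d_{Y_1}(Y_0,Z)>\theta$.

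The implications $(4)\Rightarrow(5)\Rightarrow(6)\Rightarrow(1)$ are the mirror of the first three, obtained by swapping the roles of $X$ and $Z$ and of $Y_0$ and $Y_1$. Specifically, for $(4)\Rightarrow(5)$, rewrite (4) as $d_{Y_1}(Z,Y_0)>\theta$ using (SP 1) and apply (SP 3) with $(Y,X,Z)$ taken to be $(Y_1,Z,Y_0)$; the conclusion is exactly (5). Then $(5)\Rightarrow(6)$ is $W=Z$ in (5) combined with (SP 4), and $(6)\Rightarrow(1)$ is the triangle inequality at $Y_0$ together with $d_{Y_0}(X,Z)>2\theta$.

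There is no real obstacle: the proof is a direct bookkeeping of the axioms. The only small subtlety worth being careful about is applying (SP 3) to the right triple — the axiom's equality holds at the space that appears as the second coordinate of the hypothesis distance, so symmetrizing with (SP 1) is needed to make (4) feed into (SP 3) and yield an equality in $Y_0$ rather than in $Z$. With that noted, all six implications are immediate.
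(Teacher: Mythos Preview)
Your proof is correct and is essentially the same as the paper's: both rely on (SP 3) to obtain the equality statements (2) and (5), (SP 4) to extract the ``small'' inequalities (3) and (6), and (SP 2) together with the $2\theta$ hypothesis to upgrade these to the ``large'' inequalities (4) and (1). The only cosmetic difference is that the paper closes the implications via the pattern $(1)\Rightarrow(3)\Rightarrow(4)\Rightarrow(6)\Rightarrow(1)$ (using Lemma~\ref{triples}, which itself is just (SP 3) specialized at $W=Y$ plus (SP 4)) and then attaches (2) and (5) separately, while you arrange everything as a single six-cycle and specialize (2) and (5) at $W=X$ and $W=Z$ respectively; the underlying logic is identical.
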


\begin{proof}
By Lemma \ref{triples},  $(1) \Rightarrow (3)$ and $(4) \Rightarrow (6)$. By (SP 2), $(3) \Rightarrow (4)$ and $(6) \Rightarrow (1)$. By (SP 3), $(1)\Rightarrow (2)$ and $(4)\Rightarrow (5)$. Since $Y_1 \in \bY_{2\theta}(X,Z)$ by letting $W=Z$ we have $(2)\Rightarrow (4)$ and similarly $(5)\Rightarrow (1)$.
\end{proof}

Given $Y_0, Y_1 \in \bY_{2\theta}(X,Z)$ we define $Y_0<Y_1$ if any one of $(1)-(6)$ hold. 

\begin{prop}\label{order}
The relation $<$ defines a total order on $\bY_{2\theta}(X,Z)$ that extends to a total order on $\bY_{2\theta}(X,Z)\cup \{X,Z\}$ with least element $X$ and greatest element $Z$. Furthermore if $Y_0<Y_1<Y_2$ then $d_{Y_1}(Y_0,Y_2) = d_{Y_1}(X,Z)$.
\end{prop}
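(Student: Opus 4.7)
The plan is to establish, in order, the four properties of a total order (irreflexivity, antisymmetry, totality, transitivity) on $\bY_{2\theta}(X,Z)$, then extend by fiat to $\bY_{2\theta}(X,Z)\cup\{X,Z\}$, and finally derive the equality $d_{Y_1}(Y_0,Y_2)=d_{Y_1}(X,Z)$ by chaining the substitution rules~(2) and~(5) of Lemma~\ref{conditions}. Throughout I will move freely between the six characterizations in Lemma~\ref{conditions}, picking whichever form is most convenient.

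Irreflexivity is automatic since $<$ is only defined on distinct elements. For antisymmetry, suppose $Y_0<Y_1$ and $Y_1<Y_0$; by characterization~(1) the first gives $d_{Y_0}(X,Y_1)>\theta$, while by characterization~(3) the second gives $d_{Y_0}(X,Y_1)\le\theta$, a contradiction. For totality, given distinct $Y_0,Y_1\in\bY_{2\theta}(X,Z)$, either $d_{Y_0}(X,Y_1)>\theta$, in which case~(1) yields $Y_0<Y_1$, or else $d_{Y_0}(X,Y_1)\le\theta$; in the latter case (SP~2) together with $d_{Y_0}(X,Z)>2\theta$ forces $d_{Y_0}(Y_1,Z)>\theta$, which is characterization~(4) for the pair $(Y_1,Y_0)$ and thus gives $Y_1<Y_0$. (Note $Y_0\ne X,Z$ since $Y_0\in\bY_{2\theta}(X,Z)$, so the $d_{Y_0}$ values are defined.)

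The main piece is transitivity: assume $Y_0<Y_1<Y_2$ and deduce $Y_0<Y_2$. The idea is to apply (SP~3) twice. First, $Y_0<Y_1$ gives $d_{Y_0}(X,Y_1)>\theta$, so (SP~3) with $Y=Y_0$, triple $(X,Y_1,Y_2)$ yields $d_{Y_1}(X,Y_2)=d_{Y_1}(Y_0,Y_2)$; since $Y_1<Y_2$ makes the left-hand side $>\theta$ by~(1), we obtain $d_{Y_1}(Y_0,Y_2)>\theta$. Now apply (SP~3) again with $Y=Y_1$, triple $(Y_0,Y_2,X)$, to get $d_{Y_2}(Y_0,X)=d_{Y_2}(Y_1,X)$; the right-hand side is $\le\theta$ by characterization~(3) of $Y_1<Y_2$, so $d_{Y_2}(X,Y_0)\le\theta$, which is characterization~(3) for $Y_0<Y_2$. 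Thus $<$ is a total order. I extend it by declaring $X<Y<Z$ for every $Y\in\bY_{2\theta}(X,Z)$ and $X<Z$; this trivially remains a total order.

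For the "Furthermore" claim, characterization~(2) applied to $Y_0<Y_1$ with $W=Y_2$ gives $d_{Y_1}(Y_0,Y_2)=d_{Y_1}(X,Y_2)$, and characterization~(5) applied to $Y_1<Y_2$ with $W=X$ gives $d_{Y_1}(X,Y_2)=d_{Y_1}(X,Z)$; concatenating yields $d_{Y_1}(Y_0,Y_2)=d_{Y_1}(X,Z)$. The only step that requires any care is transitivity, where one must pick the correct application of (SP~3) to flip between the $Y_0$-side and the $Y_2$-side of the order; everything else is a direct unpacking of the equivalences already proved in Lemma~\ref{conditions}.
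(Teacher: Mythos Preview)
Your proof is correct and follows essentially the same approach as the paper's: both use the equivalences of Lemma~\ref{conditions} to establish totality and antisymmetry, then chain two applications of the substitution rules (equivalently, (SP~3)) for transitivity and for the final equality. The only cosmetic difference is that the paper derives $d_{Y_1}(Y_0,Y_2)=d_{Y_1}(X,Z)$ in the course of the transitivity argument itself, whereas you establish transitivity via characterization~(3) and then prove the equality separately.
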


Notice that with a coarse version of (SP 3) there would be no hope to obtain the last conclusion as stated.

\begin{proof}
By swapping $Y_0$ and $Y_1$ we see that $Y_0 < Y_1$ if and only if $Y_1 \not< Y_0$. So any two elements of $\bY_{2\theta}(X,Z)$ can be compared.

Now we check transitivity of the order. If $Y_0< Y_1$ and $Y_1<Y_2$ we apply (2) for $Y_0<Y_1$ with $W=Y_2$ and then again to $Y_1<Y_2$ with $W=Z$ to see that $d_{Y_1}(Y_0, Y_2) = d_{Y_1}(X,Y_2) = d_{Y_1}(X,Z) > 2\theta$. Applying (SP 3) and then (5) we have $d_{Y_2}(Y_0, Z) = d_{Y_2}(Y_1,Z) = d_{Y_2}(X,Z) > 2\theta$. Therefore $Y_0<Y_2$ and the total order is well defined on $\bY_{2\theta}(X,Z)$. We can extend it to a total order on $\bY_{2\theta}(X,Z) \cup \{X,Z\}$ by declaring $X$ to be the least element and $Z$ the greatest element.

Observe that we have also shown that $d_{Y_1}(Y_0,Y_2) = d_{Y_1}(X,Z)$ if $Y_0<Y_1<Y_2$.
\end{proof}

The main use of the following lemma will be to construct free groups
(for other purposes, simpler versions would suffice). It is a kind of
local-to-global principle.

\begin{lemma}\label{tree}
For any $K\geq 2\theta$ the following holds. Let $Q$ be a connected
simplicial graph and $\phi: Q^{(0)}\to \bY$ a map such that adjacent
vertices are mapped to distinct elements of ${\bf Y}$, and if $x,y$
and $z$ are distinct vertices with $x$ and $z$ adjacent to $y$ then
$d_{\phi(y)}(\phi(x), \phi(z)) > K$. Then for any immersed path
$\{x_0, \dots, x_k\}$ in $Q$ we have $\{\phi(x_0) < \cdots <
\phi(x_k)\} \subset \bY_{K}(\phi(x_0), \phi(x_k))\cup \{\phi(x_0),
\phi(x_k)\}$. In particular, $\phi$ is injective and $Q$ is a tree.
\end{lemma}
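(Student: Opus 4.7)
The plan is to prove the lemma by strong induction on the path length $k$, with the cases $k \leq 2$ immediate (the $k = 2$ case is precisely the hypothesis $d_{\phi(x_1)}(\phi(x_0), \phi(x_2)) > K$). For the inductive step, I will establish, for each interior index $1 \leq i \leq k-1$, the upgraded inequality $d_{\phi(x_i)}(\phi(x_0), \phi(x_k)) > K$, which places $\phi(x_i) \in \bY_K(\phi(x_0), \phi(x_k))$. The total ordering along the path and the global consequences about $\phi$ and $Q$ will then follow from Lemma \ref{conditions}.

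The key computation begins from $d_{\phi(x_i)}(\phi(x_{i-1}), \phi(x_{i+1})) > K$, which holds by the adjacency hypothesis on $Q$. The goal is to replace $\phi(x_{i-1})$ by $\phi(x_0)$ and $\phi(x_{i+1})$ by $\phi(x_k)$ inside $d_{\phi(x_i)}$ without losing the inequality, by applying (SP 3) once on each side. For the left replacement, the inductive hypothesis applied to the sub-path $x_0, \ldots, x_i$ (strictly shorter than $k$) gives $d_{\phi(x_{i-1})}(\phi(x_0), \phi(x_i)) > K > \theta$; feeding this into (SP 3) with $Y = \phi(x_{i-1})$, $X = \phi(x_0)$, $Z = \phi(x_i)$, $W = \phi(x_{i+1})$ yields the \emph{exact} equality $d_{\phi(x_i)}(\phi(x_0), \phi(x_{i+1})) = d_{\phi(x_i)}(\phi(x_{i-1}), \phi(x_{i+1}))$. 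A symmetric application of (SP 3), justified by the inductive hypothesis on the sub-path $x_i, \ldots, x_k$, then replaces $\phi(x_{i+1})$ by $\phi(x_k)$. The boundary indices $i = 1$ and $i = k-1$ need only one such substitution, since the other endpoint is already $\phi(x_0)$ or $\phi(x_k)$.

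Once $\phi(x_i) \in \bY_K(\phi(x_0), \phi(x_k)) \subset \bY_{2\theta}(\phi(x_0), \phi(x_k))$ for every interior $i$, the total order $\phi(x_0) < \phi(x_1) < \cdots < \phi(x_k)$ is read off from Lemma \ref{conditions}: the comparison $\phi(x_i) < \phi(x_{i+1})$ for $1 \leq i \leq k-2$ reduces to its condition (1), $d_{\phi(x_i)}(\phi(x_0), \phi(x_{i+1})) > \theta$, which is exactly the intermediate value produced by the left-side (SP 3) computation above (and holds by hypothesis when $i = 1$); the comparisons involving the endpoints $\phi(x_0), \phi(x_k)$ are by convention in the extended order. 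Injectivity of $\phi$ and the tree property of $Q$ are then formal: any cycle in $Q$, or any pair $v \neq w$ with $\phi(v) = \phi(w)$, would yield a shortest (hence immersed) path whose two endpoints collide in $\bY$, contradicting the strict chain just established. The most delicate bookkeeping point is verifying the side condition $W \neq Z$ in each application of (SP 3), which reduces to the distinctness of consecutive vertex images (given by the adjacency hypothesis) together with the inductive distinctness of $\phi$-images on shorter sub-paths; once these are tracked, the argument is short precisely because (SP 3) gives equality rather than a coarse estimate, so no constants get lost along the induction.
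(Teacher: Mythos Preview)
Your proposal is correct and follows essentially the same approach as the paper: induction on the path length, using (SP~3) to propagate the inequality $d_{\phi(x_i)}(\cdot,\cdot)>K$ from short sub-paths to the full path, and then reading off the order via Lemma~\ref{conditions}. The only cosmetic difference is that the paper chops off just the last vertex (reducing a length-$k$ path to the single length-$(k-1)$ sub-path $x_0,\dots,x_{k-1}$ and applying (SP~3) once per interior index), whereas you split symmetrically at each $i$ into $x_0,\dots,x_i$ and $x_i,\dots,x_k$ and apply (SP~3) twice; both work for the same reason.
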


\begin{proof}
For $k\le 2$ the conclusion clearly holds. We proceed by induction on
$k$. Let $x_0, \dots, x_k$ be an immersed path and let $X_i =
\phi(x_i)$. We first show $X_i \in \bY_{K}(X_0,X_k)$ for $0<i<k$. If
$1\leq i< k-1$ then $d_{X_i}(X_0,X_{k-1}) > K$ and $d_{X_{k-1}}(X_i,X_k)
> K$ by induction. Then by (SP 3), $d_{X_i}(X_0, X_k) = d_{X_i}(X_0,
X_{k-1}) > K$ so $X_i \in \bY_{K}(X_0,X_k)$.

If $i=k-1$ we
reverse the roles of $X_0$ and $X_k$, and of $X_1$ and $X_{k-1}$.

For the order we have that $d_{X_i}(X_0,X_j) > K$ if $0<i<j< k$ since $X_i \in \bY_{K}(X_0,X_j)$ and therefore $X_i< X_j$.
\end{proof}

\begin{cor}\label{containment}
Let $K\geq 2\theta$. If $Y_0, Y_1 \in
\bY_{K}(X,Z)\cup\{X,Z\}$ and $d_Y(Y_0,Y_1) > K$ then $Y \in
\bY_{K}(X,Z)$. 
\end{cor}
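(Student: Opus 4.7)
The strategy is to apply Lemma \ref{tree} to the path graph with vertices labeled in order $X, Y_0, Y, Y_1, Z$, collapsing any coincidences when $Y_0$ or $Y_1$ lies in $\{X,Z\}$. Once Lemma \ref{tree} applies, its conclusion places $Y$ in the resulting $<$-chain between $X$ and $Z$, giving $Y \in \bY_K(X,Z)$ immediately.

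The middle triple $(Y_0, Y, Y_1)$ satisfies the required condition $d_Y(Y_0, Y_1) > K$ by hypothesis. For the outer triples in the main case $Y_0, Y_1 \in \bY_K(X,Z)$, we may assume WLOG that $Y_0 < Y_1$ in the order on $\bY_{2\theta}(X,Z) \cup \{X,Z\}$ supplied by Proposition \ref{order}. Applying (SP 3) to $d_Y(Y_0, Y_1) > \theta$ in both orderings of the arguments yields $d_{Y_0}(X, Y) = d_{Y_0}(X, Y_1)$ and $d_{Y_1}(Y, Z) = d_{Y_1}(Y_0, Z)$. The ``Furthermore'' clause of Proposition \ref{order}, applied to the chains $X < Y_0 < Y_1$ and $Y_0 < Y_1 < Z$, then identifies these with $d_{Y_0}(X,Z) > K$ and $d_{Y_1}(X,Z) > K$, giving the two outer triple conditions. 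Distinctness of adjacent labels in the path is automatic: if $Y = Y_0$ or $Y = Y_1$ then (SP 4) gives $d_Y(Y_0, Y_1) \leq \theta < K$, contradicting the hypothesis.

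The boundary cases, in which one or both of $Y_0, Y_1$ already belongs to $\{X,Z\}$, are handled by a shorter path and are in fact easier: the outer-triple condition on the ``collapsed'' side reduces directly to the hypothesis $d_Y(Y_0, Y_1) > K$, bypassing Proposition \ref{order} on that side. If both lie in $\{X,Z\}$ then $\{Y_0, Y_1\} = \{X, Z\}$ (else $d_Y(Y_0, Y_1) \leq \theta$ by (SP 4), contradicting the hypothesis), and the conclusion is immediate.

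The main obstacle is little more than bookkeeping through the case split together with deploying (SP 3) in the correct direction. A minor additional point is ruling out $Y \in \{X,Z\}$; this follows either from the implicit convention that $d_Y(Y_0, Y_1)$ requires $Y \notin \{Y_0, Y_1\}$ combined with the chains produced above, or by an auxiliary application of Lemma \ref{triples} and (SP 2) to bound $d_X(Y_0, Y_1)$ and $d_Z(Y_0, Y_1)$ by $2\theta < K$.
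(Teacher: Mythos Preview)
Your proof is correct and follows essentially the same approach as the paper: apply Lemma \ref{tree} to the path $X, Y_0, Y, Y_1, Z$ (shortened in the boundary cases), verify the middle triple by hypothesis, and obtain the outer triples via (SP 3) together with Proposition \ref{order}. One small slip: your distinctness argument invokes (SP 4) to conclude $d_Y(Y_0,Y_1)\le\theta$ when $Y=Y_0$, but (SP 4) only bounds $d_Y(W,W)$, not $d_Y(Y,\cdot)$; this is harmless since in that case either $Y_0\in\bY_K(X,Z)$ (and the conclusion is trivial) or $Y_0\in\{X,Z\}$ (handled by your final paragraph, where the inequality should read $2\theta\le K$ rather than $2\theta<K$).
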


\begin{proof}
We assume $Y_i\notin \{X,Z\}$, since in those cases the proof is similar and easier.

We can assume that $Y_0< Y_1$. We then apply Lemma \ref{tree} where
$Q$ is a closed interval subdivided into 4 segments with vertices
labeled $X, Y_0, Y, Y_1, Z$, in order. By assumption $d_Y(Y_0,Y_1) >K$
so we only need to check that $d_{Y_0}(X,Y)$ and
$d_{Y_1}(Y,Z)$ are greater than $K$. By (SP 3) $d_{Y_0}(X,Y) =
d_{Y_0}(X, Y_1)$. By Proposition \ref{order}, $d_{Y_0}(X,Y_1) =
d_{Y_0}(X,Z) >K$. Therefore $d_{Y_0}(X,Y) >K$.
Similarly, $d_{Y_1}(Y,Z) >K$.
\end{proof}

\section{The projection complex}
Fix $K\geq 2\theta$ and define the graph $\cP_K(\bY)$ with vertex set
$\bY$ and an edge between any two vertices $X$ and $Z$ with
$\bY_K(X,Z) = \emptyset$. We denote the distance in $\cP_K(\bY)$
simply by $d$, even though it depends on $K$.

We first note that $\cP_K(\bY)$ is connected.

\begin{lemma}\label{standard paths}
If $K\geq 2\theta$ and $X,Z\in\bY$ then $\bY_K(X,Z) \cup \{X,Z\} =
\{X<X_1< \cdots < X_k < Z\}$ is a path in $\cP_K(\bY)$.
\end{lemma}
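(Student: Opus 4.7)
The plan is to show that the listed set is well-defined and totally ordered, then verify that consecutive elements are joined by edges in $\cP_K(\bY)$.

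First I would observe that $\bY_K(X,Z)$ is finite by (SP 5), and since $K \geq 2\theta$ we have $\bY_K(X,Z) \subset \bY_{2\theta}(X,Z)$, so Proposition \ref{order} gives a total order, extending to a total order on $\bY_K(X,Z)\cup\{X,Z\}$ with $X$ least and $Z$ greatest. List the elements $X < X_1 < \cdots < X_k < Z$; these are automatically distinct. What remains is to check that consecutive pairs are adjacent in $\cP_K(\bY)$, i.e.\ $\bY_K(X_i,X_{i+1})=\emptyset$ for $0 \le i \le k$ (writing $X_0=X$, $X_{k+1}=Z$).

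The key step is a contradiction argument. Suppose $Y \in \bY_K(X_i,X_{i+1})$. By Corollary \ref{containment}, applied to $Y_0 = X_i$, $Y_1 = X_{i+1}$ (both in $\bY_K(X,Z)\cup\{X,Z\}$), we conclude $Y \in \bY_K(X,Z)$, so $Y = X_j$ for some $j$ with $j \neq i, i+1$. Either $j < i$ or $j > i+1$; by symmetry (swapping the roles of $X$ and $Z$) it suffices to handle $j < i$. Then in the total order we have $X_j < X_i < X_{i+1}$, and I would use Lemma \ref{conditions} applied to the pair $X_j < X_{i+1}$: condition (5) gives $d_{X_j}(X_i, X_{i+1}) = d_{X_j}(X_i, Z)$, and condition (6) applied to $X_j < X_i$ gives $d_{X_j}(X_i, Z) \leq \theta$. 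Hence $d_{X_j}(X_i, X_{i+1}) \leq \theta < K$, contradicting $X_j \in \bY_K(X_i, X_{i+1})$.

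The only mildly delicate point is the degenerate case of the endpoints, where one of $X_i, X_{i+1}$ coincides with $X$ or $Z$; there one has $X_j$ strictly between the two, and Lemma \ref{conditions} still applies since $X_j \in \bY_{2\theta}(X,Z)$ (condition (3) directly yields $d_{X_j}(X, X_1) \leq \theta$, etc.). I do not expect a serious obstacle here; the whole point is that the sharp equality in (SP 3), packaged by Lemma \ref{conditions} and Proposition \ref{order}, lets us convert the chain structure of the order into exact bounds rather than coarse ones, immediately ruling out any projection through a third $X_j$.
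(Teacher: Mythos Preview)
Your proof is correct and follows essentially the same route as the paper: both invoke Corollary~\ref{containment} to force any putative $Y\in\bY_K(X_i,X_{i+1})$ into $\bY_K(X,Z)$, and then derive a contradiction with the consecutiveness of $X_i,X_{i+1}$. The paper's one-line proof leaves that last contradiction implicit (it is hidden in the proof of Corollary~\ref{containment}, where Lemma~\ref{tree} actually yields $X_i<Y<X_{i+1}$ in the order), whereas you spell it out via Lemma~\ref{conditions}; this is a reasonable way to make the argument self-contained from the statement of the corollary alone.
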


\begin{proof}
By Corollary \ref{containment}, if $Y$ is the immediate predecessor of
$Y'$ in the total order on $\bY_K(X,Z) \cup \{X,Z\}$ then
$\bY_{K}(Y,Y') =\emptyset$ and therefore $d(Y,Y')=1$.
\end{proof}

The path $\bY_K(X,Z) \cup \{X,Z\} =
\{X<X_1< \cdots < X_k < Z\}$ is the {\it standard path} from $X$ to
$Z$. 

The following lemma says that, when moving outside the ball of radius
2 around a vertex $Z$ of $\cP_K(\bY)$, the projection to $Z$ varies
slowly, where slowly is independent of $K$.

\begin{lemma}\label{two}
If $K\geq 2\theta$ then the following holds. Let $X_0, X_1, Z \in \bY$ with $d(X_0,X_1) = 1$ and $d(X_0,Z) \ge 2$. Then $d_Z(X_0, X_1) \leq \theta$.
\end{lemma}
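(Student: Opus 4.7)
The plan is to argue by contradiction: assume $d_Z(X_0,X_1)>\theta$ and derive a contradiction with the adjacency $d(X_0,X_1)=1$. Since $d(X_0,X_1)=1$ translates to $\bY_K(X_0,X_1)=\emptyset$, the goal is to produce some $Y$ lying in $\bY_K(X_0,X_1)$. The natural candidate comes from the other hypothesis: $d(X_0,Z)\ge 2$ forces $\bY_K(X_0,Z)\neq\emptyset$, so I would pick any $Y\in \bY_K(X_0,Z)$ and try to show $d_Y(X_0,X_1)>K$.

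Before doing that, I would record one sanity check that clears away a degenerate case. Applying Lemma \ref{triples} to the assumption $d_Z(X_0,X_1)>\theta$ yields $d_{X_1}(X_0,Z)\le\theta$, which immediately rules out $Y=X_1$, since $d_Y(X_0,Z)>K\ge 2\theta$. Hence $Y\notin\{X_0,X_1,Z\}$, and every projection appearing below is well-defined.

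The heart of the proof is two successive applications of (SP 3), each exploiting the \emph{exact} equality that the strong axioms provide. First, applying (SP 3) to $d_Y(X_0,Z)>\theta$ with test point $W=X_1$ gives $d_Z(X_0,X_1)=d_Z(Y,X_1)$, so the contradiction hypothesis propagates to $d_Z(Y,X_1)>\theta$. Then, applying (SP 3) to $d_Z(Y,X_1)>\theta$ with test point $W=X_0$ gives $d_Y(X_1,X_0)=d_Y(Z,X_0)=d_Y(X_0,Z)>K$. This places $Y$ in $\bY_K(X_0,X_1)$, contradicting the adjacency $d(X_0,X_1)=1$.

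There is no serious obstacle; the only care required is bookkeeping around the roles of the three indices in each invocation of (SP 3) and around the test point $W$ avoiding the implicit exclusion. More conceptually, this lemma is a clean illustration of why (SP 3) replaces the coarse Behrstock inequality so profitably: a coarse version would only give $d_Y(X_0,X_1)$ within an additive error of $d_Y(X_0,Z)$, forcing one to take $K$ much larger than $2\theta$ to conclude $d_Y(X_0,X_1)>K$, whereas here the bound on $K$ is optimal.
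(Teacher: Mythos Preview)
Your proof is correct and follows essentially the same route as the paper's: pick $Y\in\bY_K(X_0,Z)$, use (SP 3) once to transfer $d_Z(X_0,X_1)$ to $d_Z(Y,X_1)$, and then use (SP 3) again (via symmetry) to conclude $d_Y(X_0,X_1)=d_Y(X_0,Z)>K$, contradicting $\bY_K(X_0,X_1)=\emptyset$. The only difference is cosmetic: you add the explicit check that $Y\neq X_1$, which the paper leaves implicit (and which is in any case harmless, since if $Y=X_1$ the first equality already gives $d_Z(X_0,X_1)=d_Z(X_1,X_1)\le\theta$).
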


\begin{proof}
Since $d(X_0,Z) \ge 2$ there exists a $Y \in \bY_K(X_0,Z)$ and therefore by (SP 3) $d_Z(X_0,X_1) = d_Z(Y,X_1)$. If $d_Z(Y,X_1) = d_Z(X_0,X_1) > \theta$ then by (SP 3) $d_Y(X_0,X_1) = d_Y(X_0,Z) > K$, a contradiction with $\bY_K(X_0,X_1) = \emptyset$.
\end{proof}

The following lemma and its corollary are the key to proving that $\cP_K(\bY)$ is a quasi-tree. They say that, when moving outside the ball of radius 3 around a vertex $Z$ of $\cP_K(\bY)$, the projection to $Z$ basically does not change.

\begin{lemma}\label{induction}
If $K\geq 3\theta$ then the following holds. Let $X_0, \dots, X_k$ be a path in $\cP_K(\bY)$  and $Z \in \bY$ with $d(X_i,Z) \ge 3$. Then greatest elements of $\bY_{3\theta}(X_i,Z)$ all agree.
\end{lemma}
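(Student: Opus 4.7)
The plan is to argue by induction on $k$, so it suffices to treat the case $k=1$: if $X_0, X_1$ are adjacent in $\cP_K(\bY)$ with $d(X_i, Z) \ge 3$, I must show that the greatest element $W_0$ of $\bY_{3\theta}(X_0, Z)$ coincides with the greatest element $W_1$ of $\bY_{3\theta}(X_1, Z)$. The sets $\bY_{3\theta}(X_i, Z)$ are finite by (SP 5), totally ordered by Proposition \ref{order}, and nonempty because $d(X_i, Z) \ge 3$ forces $\bY_K(X_i, Z) \ne \emptyset$, so the greatest elements are well defined.

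A preparatory step is to check that $d(X_i, W_i) \ge 2$ in $\cP_K(\bY)$, so that Lemma \ref{two} becomes available. Let $Y_m$ denote the greatest element of $\bY_K(X_0, Z)$, the predecessor of $Z$ along the standard path; then $d(X_0, Y_m) \ge d(X_0, Z) - 1 \ge 2$. Since $Y_m \in \bY_{3\theta}(X_0, Z)$ we have $Y_m \le W_0$, and if this inequality is strict then characterization (5) of Lemma \ref{conditions}, applied with $W' = X_0$, yields $d_{Y_m}(X_0, W_0) = d_{Y_m}(X_0, Z) > K$, which places $Y_m$ in $\bY_K(X_0, W_0)$ and again forces $d(X_0, W_0) \ge 2$. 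The identical argument handles $W_1$. Lemma \ref{two} then gives $d_{W_0}(X_0, X_1) \le \theta$ and $d_{W_1}(X_0, X_1) \le \theta$, which via (SP 2) places both $W_0$ and $W_1$ inside $\bY_{2\theta}(X_0, Z) \cap \bY_{2\theta}(X_1, Z)$, where they are comparable under the common order provided by Proposition \ref{order}.

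For the main step, suppose for contradiction that $W_0 \ne W_1$; by symmetry assume $W_0 < W_1$. Characterization (6) of Lemma \ref{conditions} gives $d_{W_0}(W_1, Z) \le \theta$, so the triangle inequality (SP 2) applied to $d_{W_0}(X_0, Z) > 3\theta$ yields $d_{W_0}(X_0, W_1) > 2\theta$. A second use of (SP 2) together with $d_{W_0}(X_0, X_1) \le \theta$ promotes this to $d_{W_0}(X_1, W_1) > \theta$. Now two applications of (SP 3) with $Y = W_0$, one exploiting $d_{W_0}(X_0, W_1) > \theta$ and the other $d_{W_0}(X_1, W_1) > \theta$, each with the free variable set equal to $Z$, produce the chain of exact equalities
\[
d_{W_1}(X_0, Z) \;=\; d_{W_1}(W_0, Z) \;=\; d_{W_1}(X_1, Z) \;>\; 3\theta.
\]
Hence $W_1 \in \bY_{3\theta}(X_0, Z)$, contradicting the fact that $W_0$ is the greatest element of that set.

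The main obstacle is precisely this final jump from a $2\theta$ bound to the strict $3\theta$ bound needed to place $W_1$ into $\bY_{3\theta}(X_0, Z)$. A naive triangle-inequality computation produces only $d_{W_1}(X_0, Z) > 2\theta$, because the passage from $X_0$ to $X_1$ can in principle shift projections by as much as $\theta$. The rescue comes from the equality form of (SP 3), which the authors highlight as the sharpened Behrstock inequality distinguishing the present setup from \cite{BBF}: routing the comparison through $W_0$ converts a potential $\theta$-loss into a pair of honest equalities and transfers the bound $d_{W_1}(X_1, Z) > 3\theta$ verbatim to $d_{W_1}(X_0, Z) > 3\theta$.
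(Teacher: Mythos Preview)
Your proof is correct and follows essentially the same route as the paper's: reduce to $k=1$, show $d(X_i,W_i)\ge 2$ so that Lemma~\ref{two} applies, place both $W_0,W_1$ in $\bY_{2\theta}(X_0,Z)\cap\bY_{2\theta}(X_1,Z)$, assume $W_0<W_1$, and use the exact equality in (SP~3) to transfer $d_{W_1}(X_1,Z)>3\theta$ to $d_{W_1}(X_0,Z)>3\theta$ via the intermediate term $d_{W_1}(W_0,Z)$.

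The only noticeable difference is in the preparatory step. The paper argues more directly: since $W_i$ is \emph{greatest} in $\bY_{3\theta}(X_i,Z)$, Corollary~\ref{containment} forces $\bY_{3\theta}(W_i,Z)=\emptyset$, hence $d(W_i,Z)=1$ and $d(X_i,W_i)\ge 2$ by the triangle inequality. Your version, comparing $W_0$ to the greatest element $Y_m$ of $\bY_K(X_0,Z)$ and invoking characterization~(5), works but is a bit more roundabout. In the main computation the paper packages the two applications of (SP~3) as characterization~(2) of Lemma~\ref{conditions}, whereas you unpack (SP~3) by hand; this is cosmetic.
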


\begin{proof}
We can assume $k=1$.  Let $Y_0$ and $Y_1$ be the corresponding
greatest elements and assume they are distinct. By Corollary
\ref{containment}, $\bY_{3\theta}(Y_i,Z) =\emptyset$ so $d(Y_i,Z)=1$
and $d(X_i,Y_i) \ge 2$. Applying Lemma \ref{two} we see that
$d_{Y_i}(X_0, X_{1}) \leq \theta$ and therefore by (SP 2),
$d_{Y_i}(X_{1-i}, Z) > 2\theta$. In particular, both $Y_0$ and $Y_1$
are in $\bY_{2\theta}(X_i,Z)$ for $i=0,1$. 
We can assume that
$Y_0<Y_1$ in $\bY_{2\theta}(X_0,Z)$. By Lemma \ref{conditions}(6) this
means that $d_{Y_0}(Y_1,Z)\leq\theta$ and so we also have $Y_0<Y_1$ in
$\bY_{2\theta}(X_1,Z)$. 
In particular, $d_{Y_1}(X_0,
Z) = d_{Y_1}(Y_0,Z) = d_{Y_1}(X_1,Z)> 3\theta$, contradicting the
assumption that $Y_0$ is the greatest element of
$\bY_{3\theta}(X_0,Z)$.
\end{proof}

\begin{cor}\label{three}
If $K\geq 3\theta$ then the following holds. Let $X_0, \dots, X_k$ be a path in $\cP_K(\bY)$ and $Z \in\bY$ with $d(X_i,Z)\ge 3$. Then $d_Z(X_i, X_j) \leq \theta$ for all $i,j$.
\end{cor}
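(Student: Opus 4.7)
The plan is to combine Lemma \ref{induction} with the equality form of (SP 3) to show that every $X_i$ projects to $Z$ at essentially the same place as a single common witness $Y$, and then bound all pairwise projection distances by $\theta$.

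First, I would apply Lemma \ref{induction} to obtain a single $Y \in \bY$ that is simultaneously the greatest element of $\bY_{3\theta}(X_i, Z)$ for every $i$. By definition of $\bY_{3\theta}(X_i, Z)$ we have $Y \neq X_i$ and $Y \neq Z$, and $d_Y(X_i, Z) > 3\theta > \theta$ for each $i$.

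Second, for each $i$, (SP 3) applied with $Y$ as the projection vertex and $(X_i, Z)$ as the pair gives the equality
$$d_Z(X_i, W) = d_Z(Y, W) \quad \text{for every } W \in \bY \setminus \{Z\}.$$
Setting $W = Y$ (allowed since $Y \neq Z$) and using (SP 4) yields $d_Z(X_i, Y) = d_Z(Y, Y) \leq \theta$.

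Third, fix indices $i, j$. If $X_i = X_j$, then $d_Z(X_i, X_j) \leq \theta$ is immediate from (SP 4). Otherwise, since $X_j \neq Z$ (because $d(X_j, Z) \geq 3$), I set $W = X_j$ in the displayed equality for index $i$ to get $d_Z(X_i, X_j) = d_Z(Y, X_j)$, and then invoke (SP 1) together with the bound from step two applied to index $j$ to conclude $d_Z(Y, X_j) = d_Z(X_j, Y) \leq \theta$. The main content is already in Lemma \ref{induction}, which supplies the common witness $Y$; after that, it is precisely the equality (rather than coarse equality) in (SP 3) that collapses the bound all the way down to $\theta$ rather than to some multiple of $\theta$.
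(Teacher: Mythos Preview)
Your proof is correct and follows essentially the same approach as the paper: invoke Lemma \ref{induction} to obtain a common greatest element $Y$, then use (SP 3) to replace $X_j$ by $Y$ in $d_Z(X_i,X_j)$ and bound $d_Z(X_i,Y)\leq\theta$. The only cosmetic difference is that where the paper cites Lemma \ref{triples} for the bound $d_Z(X_i,Y)\leq\theta$, you inline its proof by setting $W=Y$ in (SP 3) and applying (SP 4).
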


\begin{proof}
By Lemma \ref{induction}, there exists a $Y \in \bY$ that is the
greatest element of all of the $\bY_{3\theta}(X_i, Z)$. We now have
$d_Z(X_i,X_j)=d_Z(X_i,Y)\leq\theta$ by (SP 3) and Lemma \ref{triples}.
\end{proof}

We can now use Manning's {\it bottleneck condition} \cite{Manning.quasi.tree} to show that
$\cP_K(\bY)$ is a quasi-tree. We will use a variant of Manning's
condition that is described in \cite{BBF}: Let $X$ be a connected
simplicial graph with its usual combinatorial metric and $D\geq
0$. Assume that for all vertices $v_0,v_1 \in X^{(0)}$ there is a path
$p$ such that the $D$-Hausdorff neighborhood of any path from $v_0$ to
$v_1$ contains $p$. Then $X$ is a quasi-tree.

\begin{thm}\label{quasi-tree}
For $K \ge 3\theta$, $\cP_K(\bY)$ is a quasi-tree.
\end{thm}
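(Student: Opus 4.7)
The plan is to apply the variant of Manning's bottleneck criterion stated just before the theorem, using standard paths (Lemma \ref{standard paths}) as the distinguished paths $p$ and showing they lie in the $2$-Hausdorff neighborhood of every path between their endpoints. The key leverage comes from Corollary \ref{three}: if a path in $\cP_K(\bY)$ stays far from a vertex $Z$, its projections to $Z$ barely move, which is in tension with the definition of $\bY_K(X,Z)$.

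First I would fix $X, Z \in \bY$, and let $p = \{X = X_0 < X_1 < \cdots < X_k < X_{k+1} = Z\}$ be the standard path from $X$ to $Z$ supplied by Lemma \ref{standard paths}, so that $\{X_1,\dots,X_k\} = \bY_K(X,Z)$ and in particular $d_{X_i}(X,Z) > K \geq 3\theta$ for each interior $X_i$. Let $\gamma = (Y_0 = X, Y_1, \dots, Y_n = Z)$ be an arbitrary path in $\cP_K(\bY)$ from $X$ to $Z$. I want to show that each $X_i$ on $p$ lies in the $2$-ball around some $Y_j$ on $\gamma$; the endpoints are trivially on $\gamma$, so the content is in the interior vertices.

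I would argue this by contradiction. Suppose $d(X_i, Y_j) \ge 3$ for every $j = 0,\dots,n$. Then Corollary \ref{three}, applied with the path $\gamma$ and with $Z$ in the corollary taken to be our $X_i$, yields
$$d_{X_i}(Y_j, Y_\ell) \le \theta \quad \text{for all } j, \ell.$$
In particular, taking $j = 0$ and $\ell = n$ gives $d_{X_i}(X, Z) \le \theta$, contradicting $d_{X_i}(X,Z) > K \geq 3\theta$. Hence some vertex of $\gamma$ is within distance $2$ of $X_i$, so $p$ lies in the $2$-Hausdorff neighborhood of $\gamma$, and Manning's criterion applies with $D = 2$.

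I do not expect a serious obstacle here, since all the heavy lifting has been done in Lemma \ref{induction} and its corollary; the only small subtlety is making sure the contradiction is extracted at an interior vertex of $p$ (the endpoints $X$ and $Z$ do not satisfy $d_{X_i}(X,Z) > K$ at $X_i \in \{X,Z\}$, but those vertices are already present on $\gamma$ and so automatically at distance $0$ from it). Once the bottleneck condition holds with a uniform constant, the stated variant of Manning's theorem yields that $\cP_K(\bY)$ is quasi-isometric to a tree.
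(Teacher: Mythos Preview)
Your proof is correct and is essentially identical to the paper's own proof: both take the standard path from Lemma \ref{standard paths}, use Corollary \ref{three} to force some vertex of an arbitrary path within distance $2$ of each interior vertex $X_i$ (since otherwise $d_{X_i}(X,Z)\le\theta$ contradicts $X_i\in\bY_K(X,Z)$), and then invoke the bottleneck criterion with $D=2$. The only difference is cosmetic---you spell out the contradiction explicitly whereas the paper states the conclusion directly.
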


\begin{proof}
By Lemma \ref{standard paths}
$\bY_K(X,Z) \cup \{X,Z\} =
\{X<X_1< \cdots < X_k < Z\}$ is a path in $\cP_K(\bY)$.  Let $X=Y_0,
Y_1, \dots, Y_n=Z$ be an arbitrary path from $X$ to $Z$. Since
$d_{X_i}(X,Z) > K \ge 3\theta$ by Corollary \ref{three} there must
be a $Y_j$ such that $d(Y_j,X_i) \le 2$.
 Therefore $\cP_K(\bY)$
satisfies the bottleneck condition and is a quasi-tree.
\end{proof}

The following lemma is a variant of \cite[Lemma 3.9]{HO} proved by
Hull and Osin in the context of hyperbolically
embedded subgroups.

\begin{lemma}\label{divide}
If $K > 2\theta$ then the following holds. Given $X,Y,Z \in \bY$ the
union $\bY_K(X,Y) \cup \bY_K(Y,Z)$ contains all but at most two
elements of $\bY_K(X,Z)$, and if there are two such elements they are
consecutive.

In particular, $\bY_K(X,Z)$ is written as the disjoint union of three
consecutive segments (some possibly empty) so that the initial segment (if not empty)
is also initial in $\bY_K(X,Y)$, the second contains at most two
elements, and the terminal segment (if not empty) is also terminal in
$\bY_K(Y,Z)$.

\end{lemma}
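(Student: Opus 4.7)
The plan is to split on whether $d_Y(X,Z) > 2\theta$, i.e., on whether $Y$ is eligible to sit in the extended total order on $\bY_{2\theta}(X,Z)\cup\{X,Z\}$ from Proposition~\ref{order}.

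In the clean case $d_Y(X,Z) > 2\theta$, the element $Y$ has a well-defined position in the $<_{XZ}$ order. Any $W \in \bY_K(X,Z)$ with $W \ne Y$ is comparable to $Y$. If $W$ precedes $Y$, I apply Lemma~\ref{conditions}(5) to the pair $Y_0 = W$, $Y_1 = Y$ to obtain $d_W(X,Y) = d_W(X,Z) > K$, placing $W \in \bY_K(X,Y)$. Symmetrically, if $Y$ precedes $W$, Lemma~\ref{conditions}(2) forces $d_W(Y,Z) = d_W(X,Z) > K$ and $W \in \bY_K(Y,Z)$. Hence only $Y$ itself can fail to lie in $\bY_K(X,Y)\cup\bY_K(Y,Z)$, and this happens precisely when $Y\in\bY_K(X,Z)$, yielding a middle segment of size at most one.

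The harder case is $d_Y(X,Z) \le 2\theta$, in which $Y$ is absent from the extended order and automatically cannot lie in $\bY_K(X,Z)$. For each potentially bad $W \in \bY_K(X,Z)$ (necessarily $\ne Y$), the triangle inequality SP~2 combined with $K > 2\theta$ gives $d_W(X,Y) + d_W(Y,Z) > K > 2\theta$, so at least one summand exceeds $\theta$. I classify bad $W$s into three types: (a) $d_W(Y,Z) \le \theta$, (b) $d_W(X,Y) \le \theta$, or (c) both exceed $\theta$. The key technical step is that at most one type-(c) element exists: if $W_1 <_{XZ} W_2$ were both type (c), Lemma~\ref{conditions}(5) applied to $W_1 < W_2$ with $V = Y$ gives $d_{W_1}(Y, W_2) = d_{W_1}(Y,Z) > \theta$; SP~3 then yields $d_{W_2}(Y, X) = d_{W_2}(W_1, X) \le \theta$, the final inequality by Lemma~\ref{conditions}(3), contradicting the type-(c) hypothesis on $W_2$. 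Parallel exchange arguments using SP~3 equalities show the type-(a)'s precede any type-(c) which precedes the type-(b)'s in $<_{XZ}$, and bound the count: because the SP~3 equalities are sharp rather than coarse, the total number of bad elements is at most two, and they are consecutive.

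Finally, the three-segment decomposition follows because the defining condition $W_1 < W_2 \iff d_{W_1}(X,W_2) > \theta$ for Proposition~\ref{order}'s order is literally identical whether used on $\bY_{2\theta}(X,Y)$ or $\bY_{2\theta}(X,Z)$. The initial segment of $\bY_K(X,Z)$ contained in $\bY_K(X,Y)$ is therefore automatically an initial segment of $\bY_K(X,Y)$ in that set's own order, and symmetrically for the terminal segment and $\bY_K(Y,Z)$. The main obstacle is the fine counting in the harder case: obtaining the tight bound of two rather than a constant depending on $\theta$ relies essentially on using the exact SP~3 equalities, which is exactly the improvement afforded by the strong projection axioms over the coarse Behrstock inequality.
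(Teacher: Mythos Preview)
Your case split on whether $d_Y(X,Z)>2\theta$ is a genuinely different route from the paper's. The paper argues uniformly over $\bY_K(X,Z)=\{X_1<\cdots<X_k\}$: for each $X_i$ one of $d_{X_i}(X,Y),d_{X_i}(Y,Z)$ exceeds $\theta$, and the crucial \emph{propagation step} is that if $d_{X_i}(X,Y)>\theta$ then every $X_j$ with $j<i$ lies in $\bY_K(X,Y)$ (two applications of (SP~3), starting from $d_{X_j}(X,X_i)>K$). Taking $X_i$ to be the least bad element then forces $d_{X_{i+1}}(X,Y)\le\theta$, hence $d_{X_{i+1}}(Y,Z)>\theta$, and propagation on the other side finishes. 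No case distinction is needed.

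Your Case~1 is correct and actually yields a sharper conclusion than the paper states (at most \emph{one} missing element, namely $Y$). However, Case~2 has a real gap. Your type-(c) uniqueness argument is fine, but ``parallel exchange arguments \dots\ bound the count'' does not bound the number of bad type-(a) or type-(b) elements: if $W_1<W_2$ are both bad of type~(a), Lemma~\ref{conditions}(5) gives $d_{W_1}(Y,W_2)=d_{W_1}(Y,Z)\le\theta$, so (SP~3) does not fire and the analogue of your type-(c) contradiction does not materialise. What is actually needed is exactly the paper's propagation step: type~(a) gives $d_{W_2}(X,Y)>\theta$, and combining this with $d_{W_1}(X,W_2)=d_{W_1}(X,Z)>K$ via two uses of (SP~3) yields $d_{W_1}(X,Y)=d_{W_1}(X,W_2)>K$, so $W_1$ was not bad after all. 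Once you insert this step (and the symmetric one ruling out a bad type~(a) together with a bad type~(c), and two bad type~(b)'s, etc.), your Case~2 works---but at that point you have reproduced the paper's argument inside the harder case, and the split earns only the incidental sharpening in Case~1.
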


\begin{proof}
Set $\bY_K(X,Z) = \{X_1 < \cdots < X_k\}$. Then for any $X_i$ at least
one of $d_{X_i}(X, Y)$ or $d_{X_i}(Y,Z)$ is $> \theta$.
 If it is the
former then $X_j \in \bY_K(X,Y)$ for all $j<i$ (proof: use (SP 3)
twice to see that $d_{X_j}(X,X_i)>K$ implies
$d_{X_i}(X_{j},Y)=d_{X_i}(X,Y)>K\geq\theta$ so
$d_{X_{j}}(X,Y)=d_{X_{j}}(X,X_i)>K$) 
while if it is the
latter then $X_j \in \bY_K(Y,Z)$ for all $j>i$.

Now assume that $X_i$ is the smallest element not in the union. By the previous paragraph $d_{X_{i+1}}(X,Y) \leq \theta$, and in turn $d_{X_{i+1}}(Y,Z) > \theta$
by triangle inequality. But this implies that $X_j \in \bY_K(Y,Z)$ if $j>i+1$.

The rest is clear. See Figure \ref{tripod}.
\end{proof}

\begin{figure}[h]
\begin{center}
  \scalebox{0.5}{\input{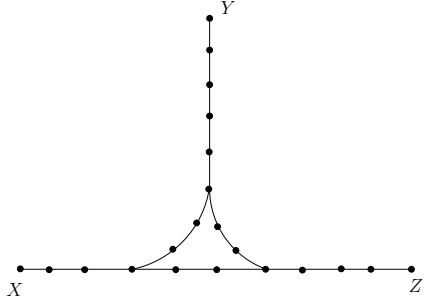}}
    \caption{A typical triangle of standard paths}
    \end{center}
    \label{tripod}
\end{figure}

\begin{cor}\label{distance}
If $K\geq 3\theta$ then the following holds. Let $X\neq Z$ and $n
= |\bY_K(X,Z)|+1$. Then $\lfloor \frac n2 \rfloor +1 \le d(X,Z) \le
n$.
\end{cor}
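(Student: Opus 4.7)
The plan is to prove the two bounds separately. The upper bound is essentially free: Lemma~\ref{standard paths} exhibits the standard path from $X$ to $Z$, which threads through the ordered set $\bY_K(X,Z) \cup \{X,Z\}$ of size $n+1$ and therefore has length exactly $n$. Hence $d(X,Z) \leq n$.

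For the lower bound, the main tool is Lemma~\ref{divide}, applied along an arbitrary path. The key numerical observation is that each application of Lemma~\ref{divide} allows at most two elements of $\bY_K(X,Z)$ to be ``lost'' when splitting at an intermediate vertex, while adjacent vertices of $\cP_K(\bY)$ have $\bY_K(\cdot,\cdot) = \emptyset$ by definition of the edges. Concretely, given a path $X = Y_0, Y_1, \ldots, Y_m = Z$ in $\cP_K(\bY)$, I would prove by induction on $m$ that
$$|\bY_K(X,Z)| \;\leq\; \sum_{i=0}^{m-1} |\bY_K(Y_i, Y_{i+1})| \;+\; 2(m-1).$$
The base case $m=1$ is immediate. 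For the inductive step, apply Lemma~\ref{divide} at the vertex $Y_1$ to get $|\bY_K(X,Z)| \leq |\bY_K(X,Y_1)| + |\bY_K(Y_1,Z)| + 2$, then apply the inductive hypothesis to the subpath $Y_1, \ldots, Y_m$ to bound $|\bY_K(Y_1, Z)|$.

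Now take the path above to be a geodesic, so $m = d(X,Z)$. Since each edge $\{Y_i, Y_{i+1}\}$ of $\cP_K(\bY)$ has $|\bY_K(Y_i,Y_{i+1})| = 0$, the sum on the right vanishes, and we obtain $n - 1 = |\bY_K(X,Z)| \leq 2(m - 1)$, i.e., $m \geq (n+1)/2$. Since $m$ is an integer, $m \geq \lceil (n+1)/2 \rceil = \lfloor n/2 \rfloor + 1$, completing the proof.

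I do not anticipate a serious obstacle here; the only point that requires a bit of care is bookkeeping the ``$+2$'' in the inductive step, but this is routine given Lemma~\ref{divide}.
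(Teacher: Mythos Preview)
Your proof is correct and follows essentially the same approach as the paper: both arguments use Lemma~\ref{divide} iteratively along a geodesic from $X$ to $Z$ to bound $|\bY_K(X,Z)|$ in terms of $d(X,Z)$. The only difference is organizational---the paper inducts on $d(X,Z)$, splitting the geodesic at an interior vertex and applying the inductive hypothesis to both halves, whereas you induct along the path, peeling off one edge at a time; your version has the minor advantage of avoiding the floor-function arithmetic in the final step.
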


\begin{proof}
The second inequality follows from the fact that
$\bY_K(X,Z)\cup\{X,Z\}$ is path from $X$ to $Z$. The proof of the
other inequality is by induction on $d(X,Z)$, the case $d(X,Z)=1$ being
clear.
 
 Suppose $d(X,Z)\geq 2$. Pick $Y\neq X,Z$ on a geodesic from $X$ to $Z$. Setting $n_1=|\bY_K(X,Y)|+1, n_2= |\bY_K(Y,Z)|+1$, Lemma \ref{divide} gives $n_1+n_2\geq n-1$. Using the induction hypothesis,
  we get $d(X,Z)=d(X,Y)+d(Y,Z)\geq\lfloor\frac{n_1}{2}\rfloor + \lfloor\frac{n_2}{2}\rfloor +2\geq \lfloor\frac{n}{2}\rfloor+1 $. 
\end{proof}

\begin{cor}
Standard paths are quasi-geodesics.
\end{cor}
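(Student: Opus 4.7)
The plan is to show that any subsegment of a standard path has length comparable to the $\cP_K(\bY)$-distance between its endpoints, using Proposition \ref{order} to control the sizes of $\bY_K$-sets and then invoking Corollary \ref{distance}.

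Concretely, write the standard path from $X$ to $Z$ as $X_0=X<X_1<\cdots<X_k<X_{k+1}=Z$, fix $0\le i<j\le k+1$, and set $u=X_i$, $v=X_j$, $m=j-i$. Since $d(u,v)\le m$ is obvious, the only content is the reverse inequality. The first step is to argue that each intermediate $X_\ell$ with $i<\ell<j$ lies in $\bY_K(u,v)$. Because $X_i<X_\ell<X_j$ in the total order on $\bY_{2\theta}(X,Z)\cup\{X,Z\}$ (using that $X$ and $Z$ sit at the extremes by Proposition \ref{order}), the ``Furthermore'' clause of Proposition \ref{order} gives
\[
d_{X_\ell}(u,v)=d_{X_\ell}(X_i,X_j)=d_{X_\ell}(X,Z)>K,
\]
so $X_\ell\in\bY_K(u,v)$. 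This yields $|\bY_K(u,v)|\ge m-1$.

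Now apply Corollary \ref{distance} with $n=|\bY_K(u,v)|+1\ge m$: it gives
\[
d(u,v)\ge\lfloor n/2\rfloor+1\ge\lfloor m/2\rfloor+1>m/2,
\]
so $m\le 2\,d(u,v)$. Combining with $d(u,v)\le m$ shows that the standard path, parameterized by edge-length, is a $(2,0)$-quasi-geodesic.

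The only subtlety I anticipate is the degenerate case in which $u$ or $v$ equals an endpoint $X$ or $Z$; that is handled uniformly once one observes that the total order on $\bY_{2\theta}(X,Z)$ was explicitly extended so that $X$ is least and $Z$ greatest, making Proposition \ref{order} directly applicable to every triple $X_i<X_\ell<X_j$ on the standard path.
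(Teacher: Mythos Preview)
Your proof is correct and is precisely the argument the paper intends: the corollary is left unproved in the paper because it follows immediately from Corollary \ref{distance}, and your verification that each intermediate $X_\ell$ lies in $\bY_K(X_i,X_j)$ via the ``Furthermore'' clause of Proposition \ref{order} (together with Lemma \ref{conditions}(2),(5) for the endpoint cases) is exactly the right way to fill in the details. The resulting $(2,0)$-quasi-geodesic bound matches what Corollary \ref{distance} gives.
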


Assume that a group $G$ acts on $\bY$ and the functions $d_Y$ are $G$-invariant. Then $G$ acts isometrically on $\cP_K(\bY)$.

The following theorem gives a simple criterion for the action on $\cP_K(\bY)$ to be acylindrical, in terms of finiteness of the size of the stabilizer of several elements of $\bY$. Roughly speaking, we look at far away $X,Z$ and an element $g$ that moves both a small distance, and deduce that a large middle interval in $\bY_K(X,Z)$ is also contained in $\bY_K(gX,gZ)$. With too many $g$, we would get too many elements stabilizing several elements of $\bY$.

\begin{thm}\label{acylindrical}
If $K\geq 3\theta$ then the following holds. Assume that for some
fixed $N$ and $B$, for any $N$ distinct elements of any $\bY_K(X,Z)$ the
common stabilizer is a finite subgroup of size at most $B$. Then the
action of $G$ on $\cP_K(\bY)$ is acylindrical.
\end{thm}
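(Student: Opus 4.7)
Fix $D > 0$ and set $S := \{g \in G : d(X, gX) \le D,\ d(Z, gZ) \le D\}$. The aim is to produce $L$ and $B'$ depending only on $D,N,B,\theta$ such that $|S| \le B'$ whenever $d(X,Z) \ge L$. The strategy is to show that each $g \in S$ acts on the standard path $\bY_K(X,Z) = \{X_1 < \cdots < X_M\}$ as an order-preserving injection (on a large subset) whose \emph{shift} in position is bounded by $O(D)$, and then apply pigeonhole together with the stabilizer hypothesis.

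By Corollary \ref{distance}, $M := |\bY_K(X,Z)| \ge L - 1$ while $|\bY_K(X, gX)|, |\bY_K(Z, gZ)| \le 2D - 1$ for every $g \in S$. Applying Lemma \ref{divide} first to the triple $(X, gX, Z)$ and then to $(gX, gZ, Z)$, I extract a subset $I_g \subset \bY_K(X, Z) \cap \bY_K(gX, gZ)$ with $|I_g| \ge M - O(D)$, namely the intersection of the terminal segment $S_3$ of $\bY_K(X, Z)$ (which, by Lemma \ref{divide}, is terminal also in $\bY_K(gX, Z)$) with the initial segment $T_1$ of $\bY_K(gX, Z)$ that sits inside $\bY_K(gX, gZ)$. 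Since $g^{-1} \in S$ and $g^{-1}(I_g) = I_{g^{-1}}$, the set $A_g := I_g \cap g^{-1}(I_g)$ satisfies $|A_g| \ge M - O(D)$ and $A_g,\, g(A_g) \subset \bY_K(X, Z)$. The crucial observation is that on $I_g$ the orders inherited from $\bY_K(X, Z)$ and from $\bY_K(gX, gZ)$ coincide: passing through $\bY_K(gX, Z)$, condition $(6)$ of Lemma \ref{conditions}, $d_{Y_0}(Y_1, Z) \le \theta$, depends only on the right endpoint, so orders on $S_3$ agree whether computed in $\bY_K(X, Z)$ or in $\bY_K(gX, Z)$; similarly condition $(3)$, $d_{Y_1}(gX, Y_0) \le \theta$, depends only on the left endpoint, so orders on $T_1$ agree in $\bY_K(gX, Z)$ and $\bY_K(gX, gZ)$. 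Since $g$ preserves the projection data, $g|_{A_g} : A_g \to g(A_g)$ is thus order-preserving for the ambient order on $\bY_K(X, Z)$. One also checks that $I_g$ occupies a consecutive middle range of positions in $\{1, \ldots, M\}$ (it is an initial segment of $S_3$), so $A_g$ does too.

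A routine counting argument bounds the shift: since both $A_g$ and $g(A_g)$ miss at most $O(D)$ elements of $\{X_1, \ldots, X_M\}$, the $i$-th smallest element of either lies at position in $[i, i + O(D)]$, and hence if $X_i \in A_g$ and $gX_i = X_j$ then $|i - j| \le O(D)$. Choose $L$ large enough that the $N$ middle positions of $\bY_K(X, Z)$ lie in $A_g$ for every $g \in S$ (this requires $L \ge O(D) + N$) and let $Y_1, \ldots, Y_N$ be the elements at those positions. For each $g \in S$, the tuple $(gY_1, \ldots, gY_N)$ consists of elements of $\bY_K(X, Z)$ each within $O(D)$ positions of $Y_i$, giving at most $O(D)^N$ possible tuples. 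If $g_1, g_2 \in S$ yield the same tuple, then $g_2^{-1} g_1$ pointwise fixes the $N$ distinct elements $Y_1, \ldots, Y_N$ of $\bY_K(X,Z)$ and hence by hypothesis lies in a subgroup of cardinality at most $B$. Therefore $|S| \le B \cdot O(D)^N$, a bound independent of $L$, as required. The technical heart is the order-agreement on $I_g$ via the intermediate $\bY_K(gX,Z)$; once this is in place, the shift estimate and pigeonhole are elementary.
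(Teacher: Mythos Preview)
Your proof is correct and follows essentially the same route as the paper: apply Lemma \ref{divide} twice through the intermediate $\bY_K(gX,Z)$ to trap a long middle segment of $\bY_K(X,Z)$ inside $\bY_K(gX,gZ)$, then pigeonhole on the possible images under $g$; you are in fact more explicit than the paper about why the two orders agree on the overlap (via conditions (3) and (6) of Lemma \ref{conditions}), a point the paper leaves implicit. The paper's final count is sharper, however: since $g\colon\bY_K(X,Z)\to\bY_K(gX,gZ)$ is an order-preserving bijection and the common middle segment $J$ is an interval on both sides (Corollary \ref{containment} plus your order argument), the restriction $g|_I$ is a rigid translation inside $\bY_K(X,Z)$ and hence determined by the image of a single element, giving the bound $B(2D+1)$ rather than your $B\cdot O(D)^N$---so the detour through $A_g=I_g\cap g^{-1}(I_g)$ and the tuple counting can be dropped.
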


\begin{figure}[h]
\begin{center}
  \scalebox{0.5}{\input{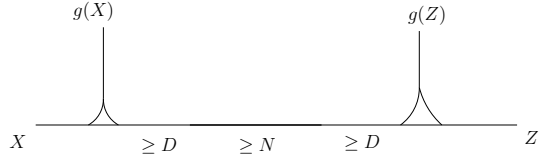}}
    \caption{Proof of acylindricity}
    \end{center}
    \label{quadrilateral}
\end{figure}

\begin{proof}
  Fix $D>0$ and assume that $X,Z \in \bY$ with $d(X,Z) \ge N + 4D
  +6$. Let $g\in G$ be such that $d(X,gX)\leq D$ and $d(Z,gZ)\leq
  D$. Consider the quadrilateral of standard paths spanned by
  $X,Z,gX,gZ$. Let the segment $I$ ( and $J$) in $\bY_K(X,Z)$
  be obtained by removing initial and terminal segments of length $2D+2$ (and $D+2$, respectively). Then $I$
  has length $|I|\geq N$ and $J$ has length $|I|+2D$, and by Lemma
  \ref{divide} $J\subset \bY_K(gX,gZ)$. Thus $g(I)\subset J$ and there
  are $\leq 2D+1$ possible restrictions of $g$ to $I$ (translation by
  a number in $[-D,D]$). By the pigeon-hole principle and the
  assumption about stabilizers it follows that there are at most
  $B(2D+1)$ such elements $g$.
\end{proof}

In the following theorem we construct free groups acting on $\cP_K(\bY)$. We use Lemma \ref{tree} to certify that certain elements generate a free group.

\begin{thm}\label{freegroup}
Assume that $K \ge 3\theta$ and fix $Y \in \bY$. If there exists
$g_0,g_1, g_2\in G$ such $d_Y(g_iY, g_jY) > K$ and $d_Y(g_i^{-1}Y, g_j^{-1}Y) >
K$ when $i\neq j$ then
the group generated by $g_1g_0^{-1}$ and $g_2g_1^{-1}$ is free and
acts faithfully on $\cP_K(\bY)$ with orbit map a QI-embedding.
\end{thm}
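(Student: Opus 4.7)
The plan is to apply Lemma \ref{tree} to a specific sequence in $\bY$ associated to each reduced word in $\langle a, b \rangle$, thereby showing that the orbit map $h \mapsto hY$ from $F(a,b)$ to $\cP_K(\bY)$ is a quasi-isometric embedding. Both freeness (since $wY \neq Y$ for every nontrivial reduced $w$ forces $w \neq e$ in $G$) and faithfulness follow immediately.

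For each $s \in \{a^{\pm 1}, b^{\pm 1}\}$, write $s = \beta_s \alpha_s^{-1}$ with $\alpha_s, \beta_s \in \{g_0, g_1, g_2\}$; thus $(\alpha_a, \beta_a) = (g_0, g_1)$, $(\alpha_{a^{-1}}, \beta_{a^{-1}}) = (g_1, g_0)$, $(\alpha_b, \beta_b) = (g_1, g_2)$, $(\alpha_{b^{-1}}, \beta_{b^{-1}}) = (g_2, g_1)$. Given a reduced word $w = s_1 s_2 \cdots s_n$, set $U_i = s_1 \cdots s_i$ and define
\[
Q_0 = Y, \qquad Q_{2i-1} = U_{i-1}\, \beta_{s_i}\, Y, \qquad Q_{2i} = U_i\, Y \qquad (1 \leq i \leq n),
\]
so $Q_{2n} = wY$. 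For a \emph{within-letter} triple $(Q_{2i-2}, Q_{2i-1}, Q_{2i})$, translation by $U_{i-1}^{-1}$ and $G$-invariance of $d_Y$ reduce the required inequality $d_{Q_{2i-1}}(Q_{2i-2}, Q_{2i}) > K$ to $d_Y(\beta_{s_i}^{-1} Y, \alpha_{s_i}^{-1} Y) > K$, which holds by the second hypothesis (on inverses) since $\alpha_{s_i} \neq \beta_{s_i}$.

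For a \emph{between-letter} triple $(Q_{2i-1}, Q_{2i}, Q_{2i+1})$, translation by $U_i^{-1}$ yields $(\alpha_{s_i} Y, Y, \beta_{s_{i+1}} Y)$, reducing the condition to $d_Y(\alpha_{s_i} Y, \beta_{s_{i+1}} Y) > K$, which is the first hypothesis provided $\alpha_{s_i} \neq \beta_{s_{i+1}}$. A brief case analysis shows the only reduced pairs with $\alpha_{s_i} = \beta_{s_{i+1}}$ are $(a^{-1}, b^{-1})$ and $(b, a)$, both forcing $\alpha_{s_i} = \beta_{s_{i+1}} = g_1$; moreover such \emph{problematic} pairs cannot occur at consecutive positions of $w$. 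At a problematic pair at position $i$, a direct computation gives $Q_{2i+1} = Q_{2i-1}$, so we delete $Q_{2i}$ and identify $Q_{2i+1}$ with $Q_{2i-1}$; the resulting combined triple $(Q_{2i-2}, Q_{2i-1}, Q_{2i+2})$ reduces by the same conjugation trick to $d_Y(\beta_{s_i}^{-1} Y, \alpha_{s_{i+1}}^{-1} Y) > K$, again the second hypothesis since $\beta_{s_i} \neq \alpha_{s_{i+1}}$ in both exceptional pairs.

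With all triples certified, $\phi$ (viewed as a map from a subdivided interval $Q$ to $\bY$, with adjacent vertices sent to distinct points by analogous computations) satisfies the hypotheses of Lemma \ref{tree}, placing the images in strict $<$-order inside $\bY_K(Y, wY) \cup \{Y, wY\}$. Since problematic pairs are non-consecutive, the number of intermediate vertices is at least of order $n$, and Corollary \ref{distance} yields $d_{\cP_K(\bY)}(Y, wY) \geq c\, n$ for some $c > 0$. Combined with the trivial upper bound $d_{\cP_K(\bY)}(Y, wY) \leq n \cdot \max_{s \in \{a^{\pm 1}, b^{\pm 1}\}} d_{\cP_K(\bY)}(Y, sY)$, this gives the QI-embedding. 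The main obstacle is the bookkeeping at the problematic junctions, but their rigid structure (always involving the shared $g_1$-direction of the axes of $a$ and $b$) makes the analysis routine.
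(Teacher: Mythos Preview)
Your argument is correct and uses the same two ingredients as the paper --- Lemma~\ref{tree} to certify that the orbit path is ordered inside $\bY_K(Y,wY)$, and Corollary~\ref{distance} to convert that into a linear lower bound on $d_{\cP_K(\bY)}(Y,wY)$. The difference is in packaging. You work directly with reduced words in $a=g_1g_0^{-1}$ and $b=g_2g_1^{-1}$, which forces you to confront the ``problematic pairs'' $(b,a)$ and $(a^{-1},b^{-1})$; these are exactly the places where the $g_i$--spelling of the word contains a cancellation $g_1^{-1}g_1$, so that your sequence $Q_0,\dots,Q_{2n}$ backtracks. The paper sidesteps this by passing to the theta graph $\Theta$ with two vertices and three edges labeled $g_0,g_1,g_2$: the free group $F$ is identified with $\pi_1(\Theta)$, the relevant cover $\Gamma$ is the universal cover, and \emph{immersed} paths in $\Gamma$ correspond precisely to words in the $g_i^{\pm1}$ with alternating signs and no cancellation. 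One then defines $\phi:\Gamma^{(0)}\to\bY$ by $\phi(w)=u_1\cdots u_k(Y)$ along the edge-path, and the hypotheses of Lemma~\ref{tree} translate directly into the two assumed inequalities (the condition at a type--$v_0$ vertex uses $d_Y(g_iY,g_jY)>K$, at a type--$v_1$ vertex uses $d_Y(g_i^{-1}Y,g_j^{-1}Y)>K$). Your deletion-and-merge step is literally performing the reduction from the $a,b$--spelling to the immersed $g_i$--spelling by hand; the theta-graph viewpoint buys you a uniform verification with no case analysis, while your explicit construction makes the QI constants more visibly computable.
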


\begin{proof}
Let $F \subset G$ be the subgroup generated by $g_1g_0^{-1}$ and
$g_2g_1^{-1}$. Consider the theta graph $\Theta$ with two vertices $v_0,v_1$
and three oriented edges labeled $g_0,g_1,g_2$ connecting $v_0$ to
$v_1$. Then $\pi_1(\Theta)$ can naturally be identified with the free
group on $g_1g_0^{-1}$ and $g_2g_1^{-1}$ and there is a canonical
epimorphism $\psi:\pi_1(\Theta)\to F$. Let $\Gamma$ be the covering space
of $\Theta$ corresponding to the kernel of $\psi$. Thus $F$ acts on
$\Gamma$ as the deck group. The edges of
$\Gamma$ have an
induced orientation and labeling by the $g_i$'s.

Define the $F$-equivariant map $\phi:\Gamma^{(0)}\to \bY$ by sending a
base vertex $w_0$ to $Y$ and if $u_1u_2\cdots u_k$ is a path from
$w_0$ to $w$ with $u_i\in \{g_0^{\pm 1},g_1^{\pm 1},g_2^{\pm 1}\}$
(with exponents necessarily alternating) then $\phi(w)=u_1u_2\cdots
u_k(Y)$.  Our assumption implies that the hypotheses of Lemma
\ref{tree} are satisfied and so $\Gamma$ is a tree and $\psi$ is an
isomorphism. The last sentence follows from Corollary \ref{distance}.
\end{proof}

In practice it is easy to verify the conditions of Theorem
\ref{freegroup}. In applications of the projection complex the set
$\bY$ is a collection of infinite diameter metric spaces and the
subgroup that fixes each metric space acts coarsely transitively. It
is then relatively easy to find the necessary elements of $G$.

\section{Forcing sequences}\label{sec:forcing}
Let $\bY= \{(Y, \rho_Y)\}$ be a collection of metric spaces.

Consider the following axioms from \cite{BBF}. For all pairwise
distinct $X,Y,Z\in{\bf Y}$, for some $\theta\geq 0$ and distance
functions $d^\pi_Y(X,Z)$:

\begin{enumerate}
\item [{\bf (P0)}] \label{item:not_transverse} 
$d^\pi_Y(X,X) \le \theta$.
 \item [{\bf (P1)}] \label{item:Behrstock}  if $d^\pi_Y(X,Z)> \theta$ then $d^\pi_X(Y,Z) \leq \theta$.
 \item [{\bf (P2)}] \label{item:finite} $\{W\neq X,Z: d^\pi_W(X,Z)> \theta\}$ is finite.
 \item[{\bf (P3)}]
$d^\pi_Y(X,Z) = d^\pi_Y(Z,X)$
\item[{\bf (P4)}]
$d^\pi_Y(X,Z) + d^\pi_Y(Z,W) \ge d^\pi_Y(X,W)$
\end{enumerate}

Families of metric spaces with projection maps satisfying 
(P0)-(P4) occur naturally in many contexts. See the introduction to \cite{BBF} for some examples.
In most cases there are projections $\pi_Y(Z)\subseteq Z$ so that
$$d^\pi_Y(X,Z):=\diam \pi_Y(X)\cup\pi_Y(Z)$$ satisfy (P0)-(P4).

The goal of this section is to prove the following theorem.

\begin{thm}\label{thm:forcing}
Assume that $\bY=\{(Y, \rho_Y)\}$ is a collection of metric spaces
with $\{d_Y^\pi \}$ satisfying (P0) - (P4) with constant $\theta$.
Then there are $\{d_Y \}$ satisfying (SP 1) -(SP 5) for the constant $11 \theta$ such that 
$$d_Y^\pi - 2 \theta \le d_Y \le d_Y^\pi + 2 \theta.$$

Moreover,  assume that $d_Y^\pi$ are obtained 
from projections $\pi_Y$. Then there are
projections $\pi'_Z(X)$ for $X\neq Z$ satisfying the following:
\begin{enumerate}[(1)]
\item $\pi'_Z(X)\subseteq N_\theta(\pi_Z(X))$,
\item $d_Y(X,Z)$ is equal to 
the diameter of $\pi'_Y(X)\cup\pi'_Y(Z)$.
\item If a group $G$ acts on $\bY$ preserving the metrics and the
  projections $\pi_Z(X)$, then $G$ also preserves the new projections
  $\pi'_Z(X)$.
  
  Thus $G$ acts on a quasi-tree as in Theorem
  \ref{quasi-tree} and the action is frequently acylindrical as in
  Theorem \ref{acylindrical}.
\end{enumerate}
\end{thm}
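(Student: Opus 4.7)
The plan is to modify each projection $\pi_Y(X)$ to $\pi'_Y(X) := \pi_Y(W(X,Y))$, where $W(X,Y)$ is a canonical element of $\bY$ lying ``between $X$ and $Y$, as close to $Y$ as possible.'' This element is the top of a \emph{forcing sequence}: a maximal chain inside the finite set $\bY^\pi_{>K_0}(X,Y) := \{W : d^\pi_W(X,Y) > K_0\}$ (finite by (P2)) for an appropriate threshold $K_0$ that is a small multiple of $\theta$, with the chain ordered by a between-ness relation derived from (P1). The new distance $d_Y(X,Z)$ is then simply the diameter of $\pi'_Y(X) \cup \pi'_Y(Z)$. The key point is that whenever $d^\pi_Y(X,Z)$ is sufficiently large, the top of the chain from $X$ to $Z$ will coincide \emph{exactly} with the top of the chain from $Y$ to $Z$; this upgrades the coarse Behrstock inequality (P1) to the strict equality (SP 3).

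Concretely, for each ordered pair $X \neq Y$, I would first establish an approximate linear ordering on $\bY^\pi_{>K_0}(X,Y) \cup \{X,Y\}$ with $X$ least and $Y$ greatest, mimicking Lemma \ref{conditions} and Proposition \ref{order} but with (P1) playing the role of the sharper (SP 3): the dichotomy between $W < W'$ and $W' < W$ comes from (P1), and transitivity is verified by a finite argument at the coarse level. Then set $W(X,Y)$ to be the maximum element of $\bY^\pi_{>K_0}(X,Y)$, or $X$ itself if that set is empty, and define $\pi'_Y(X) := \pi_Y(W(X,Y))$. Property (1) follows because (P1) together with (P3) yields $d^\pi_Y(X, W(X,Y)) \leq \theta$, so $\pi_Y(W(X,Y)) \subseteq N_\theta(\pi_Y(X))$; property (2) is definitional. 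Equivariance (clause (3)) is automatic since $W(X,Y)$ is built solely from the $G$-invariant data $\{d^\pi_\cdot\}$.

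Axioms (SP 1), (SP 2), (SP 4), (SP 5) reduce, via the sandwich $d^\pi_Y - 2\theta \leq d_Y \leq d^\pi_Y + 2\theta$ (immediate from property (1)), to their $\pi$-analogues (P3), (P4), (P0), (P2). The essential work is in (SP 3). Suppose $d_Y(X,Z) > 11\theta$; then $d^\pi_Y(X,Z) > 9\theta$, so in particular $Y \in \bY^\pi_{>K_0}(X,Z)$. The central structural claim is that the elements of $\bY^\pi_{>K_0}(X,Z)$ strictly above $Y$ in the order coincide, as an ordered set, with $\bY^\pi_{>K_0}(Y,Z)$. Granted this, $W(X,Z) = W(Y,Z)$ as elements of $\bY$, so $\pi'_Z(X) = \pi'_Z(Y)$ as subsets, and consequently $d_Z(X,W) = \diam(\pi'_Z(X) \cup \pi'_Z(W)) = d_Z(Y,W)$ for every $W \neq Z$, which is (SP 3).

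The hardest part will be this central claim: that the top of the chain from $X$ to $Z$ agrees \emph{exactly}, not merely coarsely, with the top of the chain from $Y$ to $Z$. This hinges on a careful calibration of the threshold $K_0$ so that (P1) and (P4) together force any element ``above $Y$ in the $X$-chain'' to satisfy the $Y$-chain condition with the same relative order, while the slack of $2\theta$ between $d_Y$ and $d^\pi_Y$ is absorbed by taking the strong projection constant to be $11\theta$. Tracking these constants so that coarse equalities become genuine equalities after passage to $\pi'$ is precisely the role of the forcing-sequence formalism.
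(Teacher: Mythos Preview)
Your outline has the right shape, but the ``central claim'' contains a genuine gap. You assert that the elements of $\bY^\pi_{>K_0}(X,Z)$ strictly above $Y$ coincide as a set with $\bY^\pi_{>K_0}(Y,Z)$, so that the unique maximum $W(X,Z)$ equals $W(Y,Z)$. Under (P0)--(P4) alone this set equality fails: if $W>Y$ has $d^\pi_W(X,Z)$ only slightly above $K_0$, then (P1) and (P4) yield merely $d^\pi_W(Y,Z) > K_0 - \theta$, since all one knows is $d^\pi_W(X,Y)\le\theta$. An element just above threshold for the pair $(X,Z)$ can therefore fall below threshold for $(Y,Z)$, and the two maxima differ. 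No single calibration of $K_0$ repairs this; the $\theta$-loss is intrinsic to the coarse axioms, and it recurs each time you pass from one pair of endpoints to another. Your $\pi'_Z(X)$ and $\pi'_Z(Y)$ then end up only $\theta$-close, which is the coarse Behrstock inequality you started from, not the exact (SP~3).

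The paper plugs exactly this hole with two devices you omit. First, before introducing forcing sequences it passes to an intermediate distance $\tilde d_Y$ (a supremum of $d^\pi_Y$ over a set $\mathcal H(X,Z)$ of surrogate endpoint pairs) engineered to satisfy a genuine \emph{monotonicity}: if $\tilde d_Y(X,Z)>5\theta$ and $\tilde d_W(Y,Z)>7\theta$ then $\tilde d_Y(X,W)\ge \tilde d_Y(X,Z)$, with no loss of constant. This is what allows one to insert an element into a forcing sequence without the threshold degrading. Second, the paper's forcing sequences are defined by a \emph{local} condition $\tilde d_{Y_i}(Y_{i-1},Y_{i+1})>7\theta$ on consecutive triples, and $\pi'_Z(X)$ is built from the \emph{set} $P_Z(X)$ of all penultimate elements of maximal such sequences from $X$ to $Z$, not from a single maximum of a globally defined set. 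The key lemma then shows $P_Z(X)=P_Z(Y)$ by a two-sided containment: every maximal sequence from $X$ to $Z$ passes through $Y$ and its tail is maximal from $Y$; conversely every maximal sequence from $Y$ to $Z$ extends back to $X$ keeping the same penultimate element. Neither direction singles out a unique top, and your single-valued $W(X,Y)$ would require all maximal forcing sequences to share a penultimate element, which is never established.
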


Mimicking the earlier section we let $\bY^\pi_K(X,Z)$ be the
collection of $Y \in \bY\backslash\{X,Z\}$ such that $d^\pi_Y(X,Z) >
K$.

\subsection{Modifying the distance $d^\pi$}

We assume $d_Y^\pi$ satisfy (P0) - (P4).

The first step is to modify $d^\pi$ to achieve monotonicity (see Lemma
\ref{preinsertion}). Recall from \cite{BBF} that for $X\neq Z$ we define
${\mathcal H}(X,Z)$ as the set of pairs $(X',Z')\in \bY\times\bY$ such
that one of the following holds.
\begin{itemize}
\item $d_X^\pi(X',Z'),d_Z^\pi(X',Z')>2\theta$,
\item $X=X'$ and $d_Z^\pi(X,Z')>2\theta$,
\item $Z=Z'$ and $d_X^\pi(X',Z)>2\theta$,
\item $X=X'$ and $Z=Z'$.
\end{itemize}

\begin{lemma}
  If $d_Y^\pi(X,Z)>2\theta$ and $(X',Z')\in {\mathcal H}(X,Z)$ then,
  after possibly permuting $X'$ and $Z'$,
  $d_Y^\pi(X,X'),d_Y^\pi(Z,Z')\leq\theta$. In particular,
  $|d_Y^\pi(X,Z)-d_Y^\pi(X',Z')|\leq 2\theta$.
\end{lemma}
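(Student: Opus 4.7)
The plan is to extract from axioms (P1) and (P4) a single elementary observation, call it $(\star)$: if $d_X^\pi(A,B)>2\theta$, then at most one of $d_Y^\pi(X,A)$, $d_Y^\pi(X,B)$ can exceed $\theta$. Indeed, were both $>\theta$, two applications of (P1) would give $d_X^\pi(Y,A),\,d_X^\pi(Y,B)\leq\theta$, and then (P4) would force $d_X^\pi(A,B)\leq 2\theta$, a contradiction. All the real work goes into this single step; afterwards the proof is a case analysis on the four bullets defining $\mathcal H(X,Z)$.

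The trivial case $X=X',Z=Z'$ is immediate from (P0). For a mixed case such as $X=X'$ with $d_Z^\pi(X,Z')>2\theta$, apply $(\star)$ at $Z$ to the pair $(X,Z')$: either $d_Y^\pi(Z,X)\leq\theta$ or $d_Y^\pi(Z,Z')\leq\theta$. The former is excluded by the hypothesis $d_Y^\pi(X,Z)>2\theta$ (and (P3)), so the latter must hold; meanwhile $d_Y^\pi(X,X')=d_Y^\pi(X,X)\leq\theta$ by (P0). No permutation is required, and the symmetric case $Z=Z'$ is identical.

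The interesting case, and the main obstacle, is the generic one where both $d_X^\pi(X',Z')>2\theta$ and $d_Z^\pi(X',Z')>2\theta$. Applying $(\star)$ once at $X$ and once at $Z$ produces two independent dichotomies: either $d_Y^\pi(X,X')\leq\theta$ or $d_Y^\pi(X,Z')\leq\theta$, and either $d_Y^\pi(Z,X')\leq\theta$ or $d_Y^\pi(Z,Z')\leq\theta$. Two of the four resulting combinations are immediately forbidden by the hypothesis $d_Y^\pi(X,Z)>2\theta$: the combination $d_Y^\pi(X,X'),d_Y^\pi(Z,X')\leq\theta$ would give $d_Y^\pi(X,Z)\leq 2\theta$ via (P4) and (P3), and likewise for $d_Y^\pi(X,Z'),d_Y^\pi(Z,Z')\leq\theta$. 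The two surviving combinations are the matching one $d_Y^\pi(X,X'),d_Y^\pi(Z,Z')\leq\theta$ and its swap $d_Y^\pi(X,Z'),d_Y^\pi(Z,X')\leq\theta$; the latter is absorbed by the permutation of $X'$ and $Z'$ allowed in the statement.

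The ``in particular'' clause is then a one-line consequence of (P4) combined with (P3): from $d_Y^\pi(X,X'),d_Y^\pi(Z,Z')\leq\theta$ we get
$$|d_Y^\pi(X,Z)-d_Y^\pi(X',Z')|\leq d_Y^\pi(X,X')+d_Y^\pi(Z,Z')\leq 2\theta.$$
I expect no subtleties beyond keeping track of which element the permutation is applied to in the four-way case; the rest is routine bookkeeping around $(\star)$.
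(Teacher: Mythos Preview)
Your proof is correct. The observation $(\star)$ is exactly the mechanism the paper uses as well: from $d_X^\pi(X',Z')>2\theta$ the paper deduces $\max\{d_X^\pi(X',Y),d_X^\pi(Y,Z')\}>\theta$ and then applies (P1), which is the contrapositive of your $(\star)$.

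The organization is different, though. The paper fixes the permutation immediately from the $X$-side (assume $d_X^\pi(X',Y)>\theta$, hence $d_Y^\pi(X,X')\leq\theta$) and then \emph{chains forward}: it uses the hypothesis $d_Y^\pi(X,Z)>2\theta$ in the middle to get $d_Y^\pi(X',Z)>\theta$, applies (P1) to obtain $d_Z^\pi(X',Y)\leq\theta$, and then the triangle inequality at $Z$ forces $d_Z^\pi(Y,Z')>\theta$, hence $d_Y^\pi(Z,Z')\leq\theta$. You instead apply $(\star)$ symmetrically at $X$ and at $Z$, and use the hypothesis $d_Y^\pi(X,Z)>2\theta$ only at the end to eliminate the two ``bad'' combinations. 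Both routes are equally elementary; yours is more symmetric, the paper's avoids the four-way case split. You also spell out all four bullets of $\mathcal H(X,Z)$, whereas the paper writes out only the generic one.
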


\begin{proof}
  By the triangle inequality (P4)
  $$d_X^\pi(X',Y)+d_X^\pi(Y,Z') \ge d_X(X',Z') > 2\theta$$ and
  therefore $\max\{d_X^\pi(X',Y), d_X^\pi(Y,Z')\} >\theta$. After
  possibly permuting $X'$ and $Z'$ we can assume $d_X^\pi(X',Y)
  >\theta$ and therefore by (P1) we have $d_Y^\pi(X,X') \le
  \theta$. By another application of the triangle inequality
  $$d_Y^\pi(X,X') + d^\pi_Y(X',Z) \ge d_Y^\pi(X,Z)>2\theta$$
  and since $d_Y^\pi(X,X') \le\theta$ this implies that $d_Y^\pi(X',Z)
  >\theta$. Therefore (P1) implies that $d_Z^\pi(X',Y) \le\theta$. Now,
  replacing $X$ with $Z$ in the above application of the triangle
  inequality (P4) we have $\max\{d_Z^\pi(X',Y), d_Z^\pi(Y,Z')\} >\theta$
  and therefore $d_Z^\pi(Y,Z') >\theta$ since we have just seen that
  the other term is $\le\theta$. Another application of (P1) gives
  that $d_Y^\pi(Z,Z') \le \theta$.

  The final inequality follows from the triangle inequality (P4).
\end{proof}

\begin{cor}\label{inclusions}
  If $d_Y^\pi(X,Z)>4\theta$ then ${\mathcal H}(X,Z)\subseteq {\mathcal
    H}(X,Y)$.
\end{cor}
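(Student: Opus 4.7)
The plan is to take an arbitrary $(X', Z') \in \mathcal{H}(X, Z)$ and verify that one of the four defining bullets of $\mathcal{H}(X, Y)$ holds. The previous lemma, applied with $d_Y^\pi(X, Z) > 4\theta > 2\theta$, will be the workhorse: it yields $|d_Y^\pi(X, Z) - d_Y^\pi(X', Z')| \leq 2\theta$, hence $d_Y^\pi(X', Z') > 2\theta$, together with (after possibly permuting $X'$ with $Z'$) the two bounds $d_Y^\pi(X, X') \leq \theta$ and $d_Y^\pi(Z, Z') \leq \theta$.

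In the generic situation where $\{X', Z'\} \cap \{X, Y, Z\} = \emptyset$, the membership $(X', Z') \in \mathcal{H}(X, Z)$ is necessarily via bullet 1, so $d_X^\pi(X', Z') > 2\theta$; combined with the lemma's $d_Y^\pi(X', Z') > 2\theta$, this is exactly bullet 1 of $\mathcal{H}(X, Y)$. The degenerate coincidences I would handle case by case. If $X' = X$ and $Z' \neq Y$, the triangle inequality with $d_Y^\pi(Z, Z') \leq \theta$ and $d_Y^\pi(X, Z) > 4\theta$ yields $d_Y^\pi(X, Z') > 3\theta$, giving bullet 2 of $\mathcal{H}(X, Y)$; the $Z' = Z$ case is symmetric and gives bullet 1 via $d_Y^\pi(X, X') \leq \theta$. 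If $Z' = Y$ and $X' \neq X$, bullet 3 of $\mathcal{H}(X, Y)$ follows directly from $d_X^\pi(X', Y) > 2\theta$, which is already supplied by $\mathcal{H}(X, Z)$; and the remaining case $(X',Z')=(X,Y)$ lands in $\mathcal{H}(X, Y)$ via bullet 4. If $X' = Y$ (with $Z' \notin \{X, Y, Z\}$), bullet 1 of $\mathcal{H}(X, Z)$ gives $d_X^\pi(Y, Z') > 2\theta$, and the lemma gives $d_Y^\pi(Y, Z') > 2\theta$, matching bullet 1 of $\mathcal{H}(X, Y)$. A few configurations such as $(Y, Z)$ or $(X, Y)$ arising via bullet 2 of $\mathcal{H}(X,Z)$ are ruled out as vacuous by (P1) applied to $d_Y^\pi(X, Z) > \theta$, which forces $d_X^\pi(Y, Z), d_Z^\pi(X, Y) \leq \theta$.

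The main obstacle, modest as it is, is organizing this case split cleanly and handling the "after possibly permuting" clause of the previous lemma correctly. When $X' = X$ or $Z' = Z$, the alternate permutation in the lemma would conclude $d_Y^\pi(X, Z) \leq \theta$ (via $d_Y^\pi(Z, X') = d_Y^\pi(Z, X)$ or the symmetric collapse), contradicting the hypothesis, so the canonical labeling is forced. Once this bookkeeping is secured, the $4\theta$ gap versus the $2\theta$ slack of the lemma is precisely what the triangle inequality needs to absorb the $\leq \theta$ error terms and push every case over the $2\theta$ threshold.
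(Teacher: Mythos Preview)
Your approach is the same as the paper's: apply the preceding lemma to get $d_Y^\pi(X',Z')>2\theta$, which together with the already-known $d_X^\pi(X',Z')>2\theta$ lands $(X',Z')$ in $\mathcal{H}(X,Y)$ via bullet~1. The paper only writes out this generic bullet-1 case and leaves the remaining degenerate coincidences to the reader; you go further and attempt them all explicitly, and your treatment of $X'=X$, $Z'=Z$, and $(X',Z')=(X,Y)$ is fine.

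There is one slip. In the case $X'=Y$ you write that ``the lemma gives $d_Y^\pi(Y,Z')>2\theta$'', but $d_Y^\pi(Y,\,\cdot\,)$ is not defined (the axioms (P0)--(P4) are stated for pairwise distinct triples), so neither the lemma nor bullet~1 of $\mathcal{H}(X,Y)$ can be invoked with $Y$ in one of the slots. In fact, as literally written none of the four bullets of $\mathcal{H}(X,Y)$ accommodates an ordered pair $(Y,Z')$ with $Z'\neq X,Y$, so this edge case is genuinely delicate at the level of the statement itself. The clean way out is to observe that such pairs are irrelevant for the only use of the corollary (defining $\tilde d_Y$, where one takes a supremum of $d_Y^\pi(X',Z')$ over $\mathcal{H}(X,Z)$ and pairs containing $Y$ contribute nothing), or alternatively to note that $(Z',Y)$ lands in $\mathcal{H}(X,Y)$ via bullet~3 and treat $\mathcal{H}$ as symmetric for this purpose.
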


\begin{proof}
  Suppose $(X',Z')\in {\mathcal H}(X,Z)$. We will again assume the
  first bullet holds and leave the other cases to the reader. To show
  $(X',Z')\in {\mathcal
    H}(X,Y)$ it suffices to argue that $d_Y^\pi(X',Z')>2\theta$, and
  this follows from $d_Y^\pi(X,Z)>4\theta$ and the lemma.
\end{proof}

We now define the modified distance
$$\tilde d_Y(X,Z)=\sup_{(X',Z')\in{\mathcal H}(X,Z)}d_Y^\pi(X,Z)$$ if
$d^\pi_Y(X,Z)>2\theta$, and $\tilde d_Y(X,Z)=2\theta$ otherwise. Thus
$$d_Y^\pi(X,Z)\leq \tilde d_Y(X,Z)\leq d_Y^\pi(X,Z)+2\theta$$

The triangle inequality for $\tilde d$ holds only up to an error of $2\theta$.
What we gain with this modification is the following monotonicity
property.

\begin{lemma}\label{preinsertion}
If $\tilde d_Y(X,Z)>5\theta$ and $\tilde d_W(Y,Z)>7\theta$ then
$\tilde d_Y(X,W)\geq
\tilde d_Y(X,Z)$.
\end{lemma}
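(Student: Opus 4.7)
The plan is to establish the set-theoretic containment $\mathcal{H}(X,Z) \subseteq \mathcal{H}(X,W)$; once this is in hand, both $\tilde d_Y(X,Z)$ and $\tilde d_Y(X,W)$ will be computed by the supremum formula (the first automatically from the hypothesis, the second once we verify $d^\pi_Y(X,W)>2\theta$), and enlarging the set over which the supremum is taken can only increase its value. The containment itself is an immediate instance of Corollary~\ref{inclusions} applied with $W$ playing the role of the middle letter $Y$, so the entire lemma reduces to verifying the single inequality $d^\pi_W(X,Z) > 4\theta$.

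To obtain $d^\pi_W(X,Z) > 4\theta$ I would run a standard two-step Behrstock-and-triangle argument. The hypothesis $\tilde d_W(Y,Z) > 7\theta$, combined with the comparison $\tilde d \le d^\pi + 2\theta$, yields $d^\pi_W(Y,Z) > 5\theta$, and (P1) then gives $d^\pi_Y(W,Z) \le \theta$. The other hypothesis gives $d^\pi_Y(X,Z) > 3\theta$, so the triangle inequality (P4) produces $d^\pi_Y(X,W) > 2\theta$. This intermediate inequality plays two roles: first, it licenses a second application of (P1) to conclude $d^\pi_W(X,Y) \le \theta$; second, it is exactly the threshold needed to guarantee that $\tilde d_Y(X,W)$ is defined by the supremum formula rather than being set equal to $2\theta$. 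A final use of (P4) then gives $d^\pi_W(X,Z) \ge d^\pi_W(Y,Z) - d^\pi_W(X,Y) > 4\theta$, as required.

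I do not expect a real obstacle once Corollary~\ref{inclusions} is identified as the right tool. The only thing that needs care is that the numerical margins $5\theta$ and $7\theta$ in the hypotheses are precisely calibrated to absorb the two Behrstock steps (each costing $\theta$) together with the $2\theta$ gap between $\tilde d$ and $d^\pi$, while still leaving slack to clear the $4\theta$ threshold demanded by the corollary.
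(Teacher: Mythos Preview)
Your proposal is correct and follows essentially the same route as the paper's proof: derive $d^\pi_W(Y,Z)>5\theta$ and $d^\pi_Y(X,Z)>3\theta$ from the hypotheses, chain two applications of (P1) with the triangle inequality to obtain $d^\pi_W(X,Z)>4\theta$, invoke Corollary~\ref{inclusions} for the containment $\mathcal H(X,Z)\subseteq\mathcal H(X,W)$, and use $d^\pi_Y(X,W)>2\theta$ to ensure $\tilde d_Y(X,W)$ is given by the supremum formula. The paper's argument is the same sequence of steps, only more tersely written.
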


\begin{proof}
We have $d^\pi_W(Y,Z)>5\theta$ so $d^\pi_Y(W,Z)\leq\theta$. Likewise,
$d^\pi_Y(X,Z)>3\theta$ so $d^\pi_Y(X,W)\geq
d^\pi_Y(X,Z)-d^\pi_Y(W,Z)>2\theta$ and so $d^\pi_W(X,Y)\leq
\theta$. Thus $d^\pi_W(X,Z)\geq
d^\pi_W(Y,Z)-d^\pi_W(X,Y)>4\theta$. Corollary \ref{inclusions} gives
${\mathcal H}(X,W)\supseteq {\mathcal H}(X,Z)$. We saw above that
$d^\pi_Y(X,W)>2\theta$ and the statement follows.
\end{proof}

\subsection{The second (and final) modification of $d^\pi$}
To prove the theorem we need to modify the $d^\pi_Y$ so that
they satisfy the projection axioms (SP 1)-(SP 5). The key notion to do so
is that of a forcing sequence, which uses the first modification
$\tilde d$. In slightly different contexts, an idea similar to forcing
sequences has appeared in \cite{bartels} and \cite[Section 2.B]{BB}.

Also, if $d_Y^\pi$ are obtained from  projections
$\pi_Y$, this modification is realized by modifications 
of $\pi_Y$ to $\pi'_Y$.
\begin{definition}
A \emph{$K$-forcing sequence} is a sequence $Y=Y_0,\dots,Y_n=Z$
of distinct elements of $\bY$ so that
$\tilde d_{Y_i}(Y_{i-1},Y_{i+1}) > K$ (for all $i=1, \dots,n-1$).
\end{definition}

Notice that if $X\neq Z$ then $X=Y_0,Y_1=Z$ is a (usually non-maximal)
forcing sequence.

\begin{lemma}\label{pretermination}
  Let $Y_0,Y_1,\cdots,Y_n$ be a $4\theta$-forcing sequence. Then
  \begin{enumerate}[(i)]
  \item $d^\pi_{Y_n}(Y_0,Y_{n-1})\leq\theta$,
    \item
      $|d^\pi_{Y_j}(Y_i,Y_k)-d^\pi_{Y_j}(Y_{j-1},Y_{j+1})|\leq2\theta$
      for all $i<j<k$.
  \end{enumerate}
\end{lemma}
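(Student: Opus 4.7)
The plan is to prove (i) by induction on $n$, then deduce (ii) from (i) applied to sub-sequences together with the triangle inequality (P4). The unifying observation is that $\tilde d_{Y_i}(Y_{i-1},Y_{i+1})>4\theta$ combined with $\tilde d\leq d^\pi+2\theta$ yields $d^\pi_{Y_i}(Y_{i-1},Y_{i+1})>2\theta$, so (P1) (together with the symmetry (P3)) immediately gives both $d^\pi_{Y_{i-1}}(Y_i,Y_{i+1})\leq\theta$ and $d^\pi_{Y_{i+1}}(Y_{i-1},Y_i)\leq\theta$. This is the only way the forcing hypothesis will be used.

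For (i), the base case $n=2$ is the above with $i=1$. For the inductive step with $n\geq 3$, I would apply the induction hypothesis to the sub-sequence $Y_0,\ldots,Y_{n-1}$ (which is itself a $4\theta$-forcing sequence) to obtain $d^\pi_{Y_{n-1}}(Y_0,Y_{n-2})\leq\theta$. Combining this with the forcing bound $d^\pi_{Y_{n-1}}(Y_{n-2},Y_n)>2\theta$ through the reverse triangle inequality from (P4) gives $d^\pi_{Y_{n-1}}(Y_0,Y_n)>\theta$, and one more application of (P1) then yields $d^\pi_{Y_n}(Y_0,Y_{n-1})\leq\theta$, closing the induction.

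For (ii), I would observe that both the forward sub-sequence $Y_i,Y_{i+1},\ldots,Y_j$ and the reversed sub-sequence $Y_k,Y_{k-1},\ldots,Y_j$ are $4\theta$-forcing sequences. Applying (i) to each gives $d^\pi_{Y_j}(Y_i,Y_{j-1})\leq\theta$ and $d^\pi_{Y_j}(Y_k,Y_{j+1})\leq\theta$. Inserting $Y_{j-1}$ and $Y_{j+1}$ between $Y_i$ and $Y_k$ via (P4), and symmetrically inserting $Y_i$ and $Y_k$ between $Y_{j-1}$ and $Y_{j+1}$, these two $\theta$-bounds yield the two-sided estimate $|d^\pi_{Y_j}(Y_i,Y_k)-d^\pi_{Y_j}(Y_{j-1},Y_{j+1})|\leq 2\theta$.

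The main subtlety is the bookkeeping in the induction for (i): after subtracting the inductively controlled $\theta$-term, there must remain a strictly positive gap to enable a fresh application of (P1). The factor $4\theta$ in the definition of a $4\theta$-forcing sequence is precisely tuned so that this propagation survives the $\tilde d$-to-$d^\pi$ conversion with a full $\theta$ of slack to spare; any smaller constant and the induction would degrade at each step.
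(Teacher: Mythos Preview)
Your proof is correct and follows essentially the same route as the paper's: (i) is proved by induction (the paper starts at the trivial case $n=1$ rather than $n=2$, but the inductive step is identical---subtract the inductively known $\theta$ from the forcing bound $d^\pi_{Y_{n-1}}(Y_{n-2},Y_n)>2\theta$ and apply (P1)), and (ii) is deduced by applying (i) to the two sub-forcing sequences ending at $Y_j$ together with (P4). The only cosmetic difference is the starting index of the induction; note that for (ii) in the boundary cases $i=j-1$ or $k=j+1$ you are implicitly invoking the $n=1$ instance of (i), which is just (P0).
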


\begin{proof} We prove (i) by induction on $n$, starting with the
  obvious case $n=1$. Suppose that it is true for a given $n$
 and let us prove it for $n+1$. Observe that
 $d^\pi_{Y_i}(Y_{i-1},Y_{i+1})>2\theta$.

Since $d^\pi_{Y_n}(Y_0,Y_{n-1})\leq \theta$, by the triangle inequality
we have $d^\pi_{Y_n}(Y_0,Y_{n+1})>\theta$, so that
$d^\pi_{Y_{n+1}}(Y_0,Y_{n})\leq \theta$, as required.

To prove (ii) note that both $Y_i, Y_{i+1}, \dots, Y_{j}$ and $Y_{k},
Y_{k-1}, \dots, Y_j$ are $4\theta$-forcing sequences. We apply (i) to
each of them and (ii) then follows from the triangle inequality.
\end{proof}

The lemma below tells us when we can insert elements in forcing
sequences, and it will be used to show that if $\tilde d_W(X,Z)$ is large,
then any maximal forcing sequence from $X$ to $Z$ goes through
$W$. Its proof uses the monotonicity of $\tilde d$.

\begin{lemma}\label{lem:insert}
Let $Y_0,\dots,Y_n$ be a $K$-forcing sequence with $K\geq 7\theta$ and
$W \in \bY$ such that $\tilde d_W(Y_i, Y_{i+1}) > K$. Then $Y_0, \dots,
Y_i, W, Y_{i+1}, \dots, Y_n$ is a $K$-forcing sequence.
\end{lemma}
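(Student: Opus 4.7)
The plan is to verify the forcing inequality at each consecutive triple of the proposed new sequence $Y_0,\ldots,Y_i,W,Y_{i+1},\ldots,Y_n$. Triples not involving $W$ are inherited from the original sequence, and the triple centered at $W$ is precisely the hypothesis $\tilde d_W(Y_i,Y_{i+1})>K$. So only two triples require attention: $(Y_{i-1},Y_i,W)$ when $i\geq 1$ and $(W,Y_{i+1},Y_{i+2})$ when $i\leq n-2$.

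Both follow from Lemma~\ref{preinsertion}. For the first I would apply that lemma with $Y=Y_i$, $X=Y_{i-1}$, $Z=Y_{i+1}$, and its auxiliary ``$W$'' equal to our $W$: the original forcing condition gives $\tilde d_{Y_i}(Y_{i-1},Y_{i+1})>K\geq 5\theta$ and the hypothesis gives $\tilde d_W(Y_i,Y_{i+1})>K\geq 7\theta$, so the conclusion is $\tilde d_{Y_i}(Y_{i-1},W)\geq\tilde d_{Y_i}(Y_{i-1},Y_{i+1})>K$. The second inequality is symmetric: swap the roles of $Y_i$ and $Y_{i+1}$ and use that $\tilde d$ is symmetric in its two arguments, which is inherited from (P3) and the visibly symmetric definition of $\mathcal H(X,Z)$.

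It remains to check that the new sequence consists of distinct elements, as required by the definition. The inequality $\tilde d_W(Y_i,Y_{i+1})>2\theta$ rules out $W=Y_i$ and $W=Y_{i+1}$. For $W=Y_{i-1}$, axiom (P1) applied to $d^\pi_{Y_i}(Y_{i-1},Y_{i+1})\geq \tilde d_{Y_i}(Y_{i-1},Y_{i+1})-2\theta>\theta$ gives $d^\pi_{Y_{i-1}}(Y_i,Y_{i+1})\leq\theta$, so $\tilde d_{Y_{i-1}}(Y_i,Y_{i+1})=2\theta$, contradicting the hypothesis; the case $W=Y_{i+2}$ is the same after swapping the roles. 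For $W=Y_j$ with $|j-i|\geq 2$, two applications of Lemma~\ref{pretermination}(i) to the appropriately oriented sub-forcing-sequences between $Y_j$ and each of $Y_i,Y_{i+1}$, combined with the triangle inequality (P4), bound $d^\pi_{Y_j}(Y_i,Y_{i+1})\leq 2\theta$, again contradicting the hypothesis.

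The only substantive ingredient is the monotonicity of Lemma~\ref{preinsertion}; the threshold $K\geq 7\theta$ in the statement of the present lemma is there precisely to meet the $7\theta$ hypothesis of that lemma. Beyond this, the argument is bookkeeping and I anticipate no real obstacles.
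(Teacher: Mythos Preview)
Your proof is correct and follows the paper's approach exactly: the paper's proof consists solely of invoking Lemma~\ref{preinsertion} for the two new triples $(Y_{i-1},Y_i,W)$ and $(W,Y_{i+1},Y_{i+2})$, just as you do. You are in fact more careful than the paper, which omits the verification that $W$ is distinct from all the $Y_j$; your argument for distinctness (via (P1) for adjacent indices and Lemma~\ref{pretermination}(i) for far-away indices) is correct and fills this small gap.
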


\begin{proof}
We need to argue that $\tilde d_{Y_i}(Y_{i-1},W),
\tilde d_{Y_{i+1}}(W,Y_{i+2})>K$. Both follow from Lemma
\ref{preinsertion}, e.g. $\tilde d_{Y_i}(Y_{i-1},W)\geq
\tilde d_{Y_i}(Y_{i-1},Y_{i+1})>K$.
\end{proof}

\begin{lemma}\label{lem:refine}
For $K\geq 7\theta$, any $K$-forcing sequence from $X$ to $Z$ can be
refined into a maximal one.
\end{lemma}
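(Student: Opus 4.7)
The plan is to show that every $K$-forcing sequence from $X$ to $Z$ has length bounded by a constant depending only on $X$ and $Z$, so that iteratively applying Lemma \ref{lem:insert} must terminate in a maximal refinement.

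First I would establish the length bound. Given a $K$-forcing sequence $Y_0 = X, Y_1, \dots, Y_n = Z$ with $K \ge 7\theta$, and any intermediate index $0 < j < n$, the defining inequality $\tilde d_{Y_j}(Y_{j-1}, Y_{j+1}) > K$ combined with $d^\pi \ge \tilde d - 2\theta$ gives $d^\pi_{Y_j}(Y_{j-1}, Y_{j+1}) > K - 2\theta$. Since the sequence is $K$-forcing and $K \ge 4\theta$, Lemma \ref{pretermination}(ii) applied with $i=0$, $k=n$ yields
$$d^\pi_{Y_j}(X, Z) \;\ge\; d^\pi_{Y_j}(Y_{j-1}, Y_{j+1}) - 2\theta \;>\; K - 4\theta \;\ge\; 3\theta \;>\; \theta.$$
So every intermediate $Y_j$ lies in the set $\{W \ne X, Z : d^\pi_W(X, Z) > \theta\}$, which is finite by axiom (P2). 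This gives a uniform bound on $n$ for all $K$-forcing sequences from $X$ to $Z$.

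Next I would run the refinement procedure. Starting from the given $K$-forcing sequence, at each step check whether there is some $W \in \bY$ and some consecutive pair $Y_i, Y_{i+1}$ with $\tilde d_W(Y_i, Y_{i+1}) > K$. If so, Lemma \ref{lem:insert} lets us insert $W$ to obtain a new (longer) $K$-forcing sequence from $X$ to $Z$ that contains the previous one. By the length bound, this process cannot continue indefinitely, so it must terminate at a sequence admitting no further insertion, which is a maximal $K$-forcing sequence refining the original.

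I do not expect any real obstacle: the substantive work has already been done in Lemma \ref{pretermination} (to transfer the lower bound on $d^\pi_{Y_j}(Y_{j-1}, Y_{j+1})$ to $d^\pi_{Y_j}(X,Z)$) and in Lemma \ref{lem:insert} (to verify that insertion preserves the forcing property). The only thing to be careful about is that the finiteness in (P2) applies only once we know $d^\pi_{Y_j}(X,Z) > \theta$, and confirming $K \ge 7\theta$ provides exactly the slack needed after two applications of the $2\theta$-comparison between $\tilde d$ and $d^\pi$ and one application of Lemma \ref{pretermination}(ii).
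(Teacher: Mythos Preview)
Your proposal is correct and follows essentially the same approach as the paper: the paper's proof simply states that the refinement process via Lemma~\ref{lem:insert} terminates by Lemma~\ref{pretermination}(ii) and (P2), and you have spelled out exactly how those two ingredients combine to give a uniform length bound on $K$-forcing sequences from $X$ to $Z$.
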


\begin{proof}
The obvious process of refinement, using Lemma \ref{lem:insert}, must
terminate by Lemma \ref{pretermination}(ii) and (P2).
\end{proof}

\begin{lemma}\label{lem:BGI}
Let $Y_0, \dots, Y_n$ be a maximal $K$-forcing sequence, $K\geq
7\theta$, and let $W \in \bY$ with $d^\pi_W(Y_0,Y_n)> K+2\theta$. Then
$W= Y_i$ for some $i\in\{1, \dots, n-1\}$.
\end{lemma}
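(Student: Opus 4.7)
The plan is to argue by contradiction: assume $W\notin\{Y_0,\dots,Y_n\}$ and derive $d^\pi_W(Y_0,Y_n)\leq K+2\theta$, contradicting the hypothesis. The maximality of the forcing sequence, together with Lemma~\ref{lem:insert}, implies that $\tilde d_W(Y_i,Y_{i+1})\leq K$ and hence $d^\pi_W(Y_i,Y_{i+1})\leq K$ for every $i\in\{0,\dots,n-1\}$. The target is then to find an index $j$ with $d^\pi_W(Y_0,Y_j)\leq\theta$ and $d^\pi_W(Y_{j+1},Y_n)\leq\theta$; chaining these with the step bound through (P4) will produce exactly the bound $K+2\theta$.

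The heart of the argument is a Behrstock-type dichotomy at every interior vertex. For each $i\in\{1,\dots,n-1\}$ the forcing condition gives $\tilde d_{Y_i}(Y_{i-1},Y_{i+1})>K$, so using $\tilde d\leq d^\pi+2\theta$ and Lemma~\ref{pretermination}(ii) one gets $d^\pi_{Y_i}(Y_0,Y_n)>K-4\theta\geq 3\theta$ since $K\geq 7\theta$. Applying (P4) to project the triple $(Y_0,W,Y_n)$ to $Y_i$ shows that at least one of $d^\pi_{Y_i}(Y_0,W)$, $d^\pi_{Y_i}(W,Y_n)$ exceeds $\theta$; using symmetry (P3), (P1) then converts this into $d^\pi_W(Y_0,Y_i)\leq\theta$ or $d^\pi_W(Y_i,Y_n)\leq\theta$.

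To close out, let $j$ be the largest index with $d^\pi_W(Y_0,Y_j)\leq\theta$; such a $j$ exists because $j=0$ works by (P0). If $j=n$ the hypothesis is contradicted directly. If $j=n-1$, the triangle inequality and the step bound give $d^\pi_W(Y_0,Y_n)\leq\theta+K$. Otherwise $j\leq n-2$, so $Y_{j+1}$ is interior: maximality of $j$ forces $d^\pi_W(Y_0,Y_{j+1})>\theta$, so the dichotomy forces $d^\pi_W(Y_{j+1},Y_n)\leq\theta$, and two applications of (P4) give $d^\pi_W(Y_0,Y_n)\leq(\theta+K)+\theta=K+2\theta$. The main subtlety is calibrating the constants so that the dichotomy genuinely yields a bound of $\theta$ at every interior vertex; this is precisely what Lemma~\ref{pretermination}(ii), powered by the monotonicity of $\tilde d$ from Lemma~\ref{preinsertion}, is designed to supply.
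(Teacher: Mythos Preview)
Your proof is correct and follows essentially the same approach as the paper: both establish the dichotomy ``$d^\pi_W(Y_0,Y_i)\leq\theta$ or $d^\pi_W(Y_i,Y_n)\leq\theta$'' at each interior vertex via Lemma~\ref{pretermination} and (P1), locate a switching index, and use Lemma~\ref{lem:insert} and maximality at that step. The only organizational difference is that you convert maximality into the uniform step bound $d^\pi_W(Y_i,Y_{i+1})\leq K$ at the outset and then bound $d^\pi_W(Y_0,Y_n)$ from above, whereas the paper uses the hypothesis $d^\pi_W(Y_0,Y_n)>K+2\theta$ to produce a single large step $d^\pi_W(Y_i,Y_{i+1})>K$ and then invokes Lemma~\ref{lem:insert} to contradict maximality directly; these are contrapositives of one another.
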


\begin{proof}
We assume that $W$ is distinct from all the $Y_i$ and derive a contradiction.

By Lemma \ref{pretermination}, $d^\pi_{Y_i}(Y_0, Y_n) > 2\theta$. We
first observe that if $d^\pi_W(Y_i, Y_n) > \theta$ then
$d^\pi_{Y_{i}}(Y_0, W) \ge d^\pi_{Y_{i}}(Y_0, Y_n) -
d^\pi_{Y_{i}}(W,Y_n) > \theta$ since by (P1) $d^\pi_{Y_{i}}(W,Y_n)\leq
\theta$. Again applying (P1) we have $d^\pi_W(Y_0, Y_i) \leq \theta$.

We have proved that for every $i$, either $d^\pi_W(Y_0,Y_i)\leq\theta$
or $d^\pi_W(Y_n,Y_i)\leq\theta$. There is some $i$ such that
$d^\pi_W(Y_0,Y_i)\leq\theta$ and
$d^\pi_W(Y_n,Y_{i+1})\leq\theta$. From the triangle inequality and our
assumption, we have $d^\pi_W(Y_i,Y_{i+1})>K$ and therefore $Y_0, \dots, Y_{i-1}, W,
Y_i,\dots, Y_n$ is a $K$-forcing sequence by Lemma
\ref{lem:insert}, contradicting our maximality assumption.
\end{proof}

\begin{lemma}\label{lem:concatenate}
Let $Y_0,\dots,Y_{n-1},Y_n$ be a maximal $K$-forcing sequence with
$K\ge 7\theta$ and suppose that $X\in\bY$ satisfies
$d^\pi_{Y_0}(X,Y_n) > K+\theta$. Then there exists a maximal
$K$-forcing sequence from $X$ to $Y_n$ with penultimate element
$Y_{n-1}$.
\end{lemma}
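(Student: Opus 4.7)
The plan is to prepend $X$ to the given sequence, verify that the resulting sequence is a $K$-forcing sequence, and then refine to a maximal one while ensuring the last edge $Y_{n-1}, Y_n$ survives. Concretely, I would examine the candidate sequence $X, Y_0, Y_1, \ldots, Y_{n-1}, Y_n$. All interior forcing conditions except the one at position $Y_0$ are inherited from the original sequence, so the only new check is $\tilde d_{Y_0}(X, Y_1) > K$. To obtain this I apply Lemma \ref{pretermination}(i) to the \emph{reversed} sequence $Y_n, Y_{n-1}, \ldots, Y_0$, which yields $d^\pi_{Y_0}(Y_1, Y_n) \leq \theta$ (using symmetry (P3)). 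Combined with (P4) and the hypothesis $d^\pi_{Y_0}(X, Y_n) > K + \theta$, this gives
$$d^\pi_{Y_0}(X, Y_1) \;\geq\; d^\pi_{Y_0}(X, Y_n) - d^\pi_{Y_0}(Y_1, Y_n) \;>\; K,$$
and hence $\tilde d_{Y_0}(X,Y_1) \geq d^\pi_{Y_0}(X,Y_1) > K$.

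Before declaring the extended sequence valid, I need to rule out $X = Y_i$ for each $i$. The cases $X = Y_0$ and $X = Y_n$ are immediate from (P0). For $1 \leq i \leq n-1$, applying Lemma \ref{pretermination}(i) to the reversed initial segment $Y_i, Y_{i-1}, \ldots, Y_0$ gives $d^\pi_{Y_0}(Y_i, Y_1) \leq \theta$; combined with $d^\pi_{Y_0}(Y_1, Y_n) \leq \theta$ and (P4), this yields $d^\pi_{Y_0}(X, Y_n) = d^\pi_{Y_0}(Y_i, Y_n) \leq 2\theta$, contradicting $K + \theta \geq 8\theta$.

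Having verified that $X, Y_0, \ldots, Y_n$ is a $K$-forcing sequence, Lemma \ref{lem:refine} yields a maximal refinement. The final task is to show that no element gets inserted between $Y_{n-1}$ and $Y_n$ during refinement, so the penultimate element stays $Y_{n-1}$. Any such insertion candidate would be a $W \notin \{X, Y_0, \ldots, Y_n\}$ with $\tilde d_W(Y_{n-1}, Y_n) > K$; but by maximality of the \emph{original} sequence no such $W$ exists outside $\{Y_0, \ldots, Y_n\}$, and $X$ itself is barred because it already appears in the sequence. The main obstacle is conceptual rather than computational: one has to notice that the maximality of the old sequence already prevents any refinement from subdividing its final edge, even after $X$ is glued onto the front.
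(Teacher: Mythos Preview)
Your proof is correct and follows essentially the same approach as the paper: prepend $X$, verify the forcing condition at $Y_0$ via Lemma~\ref{pretermination}(i) and the triangle inequality, refine to a maximal sequence, and observe that the final edge $(Y_{n-1},Y_n)$ cannot be subdivided without contradicting maximality of the original sequence. The only difference is that you explicitly verify $X \neq Y_i$ for all $i$, a distinctness check the paper leaves implicit.
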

 
\begin{proof}
By Lemma \ref{pretermination}, $d^\pi_{Y_0}(Y_1, Y_n) \leq \theta$
so $$\tilde d_{Y_0}(X,Y_1)\geq d^\pi_{Y_0}(X,Y_1) \ge
d^\pi_{Y_0}(X,Y_n) - d^\pi_{Y_0}(Y_1,Y_n) > K.$$ Therefore
$X,Y_0,\dots, Y_n$ is a $K$-forcing sequence. Any maximal refinement
will have the required property for if in the refinement an element
appeared between $Y_{n-1}$ and $Y_n$ then the original sequence would
not be maximal.
\end{proof}

\begin{definition}[Penultimate elements]\label{defn:second_perturbation}
For distinct elements $X,Z\in{\bf Y}$  define  a subset $P_Z(X)=\{W\}
 \subset {\bf Y}$,
 where $W$ are  all penultimate elements  of maximal $7\theta$-forcing sequences from $X$ to $Z$. 
 Note that $P_Z(X)$ is not empty.

When $Y\neq X,Z$, we define
 $$d_Y(X,Z) = \sup d_Y^\pi(W_1,W_2),$$
 where $W_1 \in P_Y(X), W_2 \in P_Y(Z)$.
 
 If projection maps $\pi_Z$ are defined, set 
 $$\pi'_Z(X)=\bigcup \pi_Z(W),$$
 where $W \in P_Z(X)$.

\end{definition}
 Note that $d_Y(X,Z)$ is equal to 
  the $\rho_Y$-diameter of $\pi'_Y(X)\cup\pi'_Y(Z)$.
  In other words, $d_Y = d_Y^{\pi'}$.
  
  Also, if a group $G$ acts on ${\bf Y}$ preserving the 
  metrics and the projections $\pi_Z(X)$, then $G$
  preserves $\pi'_Z(X)$ by the construction.

\begin{lemma}\label{lem:second_perturbation}
We have
$$d_Y^\pi - 2 \theta \leq d_Y\leq d_Y^\pi+2\theta.$$
Also, 
$$\pi'_Z(X)\subseteq N_\theta(\pi_Z(X)),$$
where $N_\theta$ denotes the Hausdorff
$\theta$-neighborhood. 

\end{lemma}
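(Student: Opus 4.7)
The strategy is to reduce both statements to Lemma~\ref{pretermination}(i). Any $W\in P_Y(X)$ is, by definition, the penultimate element of some maximal $7\theta$-forcing sequence $X=Y_0,\dots,Y_{n-1}=W,Y_n=Y$. Since $7\theta\geq 4\theta$, Lemma~\ref{pretermination}(i) applies and gives $d^\pi_Y(X,W)\leq\theta$. The symmetric bound $d^\pi_Y(Z,W')\leq\theta$ holds for any $W'\in P_Y(Z)$. These two estimates are essentially all we need.

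For the Hausdorff inclusion, I would note that $d^\pi_Z(X,W)\leq\theta$ unpacks to $\diam_{\rho_Z}(\pi_Z(X)\cup\pi_Z(W))\leq\theta$, which immediately forces $\pi_Z(W)\subseteq N_\theta(\pi_Z(X))$. Taking the union over $W\in P_Z(X)$ yields $\pi'_Z(X)\subseteq N_\theta(\pi_Z(X))$.

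For the two-sided bound on $d_Y(X,Z)=\sup d^\pi_Y(W_1,W_2)$, fix any $W_1\in P_Y(X)$ and $W_2\in P_Y(Z)$. Two applications of the triangle inequality (P4), combined with $d^\pi_Y(X,W_1)\leq\theta$ and $d^\pi_Y(Z,W_2)\leq\theta$, give
$$d^\pi_Y(W_1,W_2)\leq d^\pi_Y(W_1,X)+d^\pi_Y(X,Z)+d^\pi_Y(Z,W_2)\leq d^\pi_Y(X,Z)+2\theta,$$
and analogously (rearranging the triangle inequality in the other direction)
$$d^\pi_Y(W_1,W_2)\geq d^\pi_Y(X,Z)-d^\pi_Y(X,W_1)-d^\pi_Y(Z,W_2)\geq d^\pi_Y(X,Z)-2\theta.$$
Taking the supremum over $(W_1,W_2)\in P_Y(X)\times P_Y(Z)$ then gives the claimed inequalities $d^\pi_Y-2\theta\leq d_Y\leq d^\pi_Y+2\theta$.

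No serious obstacle is anticipated: the heavy lifting, namely existence of maximal forcing sequences and the $\leq\theta$ control at their endpoints, has already been carried out in Lemmas~\ref{pretermination} and~\ref{lem:refine}. The only minor bookkeeping concerns degenerate cases in which the forcing sequence has length two, so that $W_1=X$ or $W_2=Z$; but then (P0) handles the coincident terms and the inequalities above remain valid without modification.
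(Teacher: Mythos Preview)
Your proof is correct and follows essentially the same approach as the paper: invoke Lemma~\ref{pretermination}(i) to obtain $d^\pi_Y(X,W)\leq\theta$ for penultimate elements, then deduce both claims via the triangle inequality (P4). The paper's proof is terser but identical in substance; your treatment of the degenerate case is already absorbed into Lemma~\ref{pretermination}(i), whose induction begins at $n=1$.
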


\begin{proof}
By Lemma \ref{pretermination} if $W$
is the penultimate element of a $7\theta$-forcing sequence from $X$ to
$Z$ then $d^\pi_Z(X,W) \leq\theta$. The inequalities follow
from triangle inequality of $d_Y^\pi$.
The second claim is clear.
\end{proof}

\begin{lemma}\label{lem:stable}
If $d_Y(X,Z) > 11\theta$ then $P_Z(X)=P_Z(Y)$,
hence $\pi'_Z(X) =\pi'_Z(Y)$.
\end{lemma}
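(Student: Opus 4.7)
The plan is to translate the hypothesis $d_Y(X,Z) > 11\theta$ into the projection estimate $d^\pi_Y(X,Z) > 9\theta$ on the \emph{original} distance, after which Lemmas~\ref{lem:BGI} and~\ref{lem:concatenate} will yield the two inclusions $P_Z(X)\subseteq P_Z(Y)$ and $P_Z(Y)\subseteq P_Z(X)$ more or less by inspection.

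First I would unpack the hypothesis: by the definition of $d_Y$ there exist $W_1\in P_Y(X)$ and $W_2\in P_Y(Z)$ with $d^\pi_Y(W_1,W_2) > 11\theta$. Applying Lemma~\ref{pretermination}(i) to a maximal $7\theta$-forcing sequence from $X$ to $Y$ with penultimate element $W_1$ (and to one from $Z$ to $Y$ with penultimate element $W_2$) gives $d^\pi_Y(X,W_1),\,d^\pi_Y(Z,W_2)\le\theta$, so the triangle inequality (P4) yields $d^\pi_Y(X,Z) > 9\theta$. This is exactly the threshold $K+2\theta$ appearing in Lemma~\ref{lem:BGI} with $K=7\theta$, and it also exceeds the threshold $K+\theta$ of Lemma~\ref{lem:concatenate}.

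For $P_Z(X)\subseteq P_Z(Y)$, I would take an arbitrary maximal $7\theta$-forcing sequence $X=Y_0,\dots,Y_n=Z$. Lemma~\ref{lem:BGI} then forces $Y=Y_i$ for some $1\le i\le n-1$. The tail $Y_i,\dots,Y_n$ is itself a maximal $7\theta$-forcing sequence from $Y$ to $Z$ (any admissible insertion, as characterized by Lemma~\ref{lem:insert}, would also be an admissible insertion in the original) and shares the penultimate element $Y_{n-1}$, giving $Y_{n-1}\in P_Z(Y)$. Symmetrically, for $P_Z(Y)\subseteq P_Z(X)$, I would start from a maximal sequence $Y=Y_0,\dots,Y_n=Z$ and apply Lemma~\ref{lem:concatenate}, whose hypothesis $d^\pi_{Y_0}(X,Y_n) > 8\theta$ is the estimate just obtained; this produces a maximal forcing sequence from $X$ to $Z$ with penultimate element $Y_{n-1}$, so $Y_{n-1}\in P_Z(X)$. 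The equality $\pi'_Z(X)=\pi'_Z(Y)$ then follows at once from the defining formula $\pi'_Z(\,\cdot\,)=\bigcup_{W\in P_Z(\,\cdot\,)}\pi_Z(W)$.

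I do not anticipate a real obstacle; the argument is essentially formal once Lemmas~\ref{pretermination}--\ref{lem:concatenate} are in hand. The only delicate point is the bookkeeping of the constant $11\theta$, which is calibrated precisely so that the $2\theta$ loss incurred in passing from $W_1,W_2$ back to $X,Z$ via Lemma~\ref{pretermination}(i) still leaves the projection distance above both the $9\theta$ threshold of Lemma~\ref{lem:BGI} and the $8\theta$ threshold of Lemma~\ref{lem:concatenate}.
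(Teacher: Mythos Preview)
Your proposal is correct and follows essentially the same route as the paper. The only cosmetic difference is that you re-derive the inequality $d^\pi_Y(X,Z)>9\theta$ directly from the definition of $d_Y$ via Lemma~\ref{pretermination}(i), whereas the paper simply invokes Lemma~\ref{lem:second_perturbation}; after that, both arguments use Lemma~\ref{lem:BGI} for $P_Z(X)\subseteq P_Z(Y)$ and Lemma~\ref{lem:concatenate} for the reverse inclusion in exactly the same way.
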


\begin{proof}
By Lemma \ref{lem:second_perturbation} if $d_Y(X,Z) > 11\theta$ then
$d^\pi_Y(X,Z) > 9\theta$. By Lemma \ref{lem:BGI} if $X=Y_0, \dots,
Y_n=Z$ is a maximal $7\theta$-forcing sequence then $Y=Y_i$ for some
$i \in \{1, \dots, n-1\}$. Then $Y_i, \dots, Y_n$ is a maximal
$7\theta$-forcing sequence from $Y$ to $Z$ and it follows that $P_Z(Y)
\supseteq P_Z(X)$.

By Lemma \ref{lem:concatenate} any maximal $7\theta$-forcing sequence
from $Y$ to $Z$ can be extended to a maximal $7\theta$-forcing
sequence from $X$ to $Z$ with the same penultimate element so
$P_Z(X) \supseteq P_Z(Y)$.

We showed $P_Z(X)=P_Z(Y)$.
Then $\pi'_Z(X) =\pi'_Z(Y)$ follows. 
\end{proof}

\begin{prop}\label{axioms follow}
If $d_Y^\pi$ satisfy (P0)-(P4), then 
$d_Y$ satisfy the projection axioms (SP 1)-(SP 5) with projection constant
  $11\theta$.
\end{prop}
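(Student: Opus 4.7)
The plan is to verify each of (SP 1)--(SP 5) in turn for the new distances $d_Y$, using the lemmas already established about forcing sequences. Most of the heavy lifting has been done; the proposition mainly assembles the pieces. The main obstacle, i.e.\ the exact equality required in (SP 3), is already packaged by Lemma \ref{lem:stable}, which says that when $d_Y(X,Z)>11\theta$ the penultimate sets $P_Z(X)$ and $P_Z(Y)$ coincide; that is why this particular cut-off appears.

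First I would record (SP 1): since $P_Y(X)$ and $P_Y(Z)$ enter the definition symmetrically and $d_Y^\pi$ is symmetric by (P3), $d_Y(X,Z)=d_Y(Z,X)$ is immediate. For (SP 2), I would argue as follows: for any $W_1\in P_Y(X)$, $W_3\in P_Y(W)$ and any fixed $W_2\in P_Y(Z)$, the triangle inequality (P4) gives $d^\pi_Y(W_1,W_3)\le d^\pi_Y(W_1,W_2)+d^\pi_Y(W_2,W_3)\le d_Y(X,Z)+d_Y(Z,W)$; taking the supremum over $W_1,W_3$ yields $d_Y(X,W)\le d_Y(X,Z)+d_Y(Z,W)$.

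Next, (SP 3) is the heart of the matter. Assume $d_Y(X,Z)>11\theta$. By Lemma \ref{lem:stable}, $P_Z(X)=P_Z(Y)$. Since the definition of $d_Z(\cdot,W)$ only sees the set $P_Z(\cdot)$ of the first argument (and $P_Z(W)$ of the second), we get $d_Z(X,W)=\sup d_Z^\pi(W_1,W_2)=d_Z(Y,W)$ for every $W\neq Z$ with $W_1\in P_Z(X)=P_Z(Y)$ and $W_2\in P_Z(W)$, which is exactly the equality required. For (SP 4), Lemma \ref{pretermination}(i) gives $d^\pi_Y(X,W)\le\theta$ for every $W\in P_Y(X)$, and hence by (P4) $d^\pi_Y(W_1,W_2)\le 2\theta$ for all $W_1,W_2\in P_Y(X)$; taking the supremum gives $d_Y(X,X)\le 2\theta\le 11\theta$. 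Finally (SP 5) follows directly from Lemma \ref{lem:second_perturbation}: if $d_Y(X,Z)>11\theta$ then $d^\pi_Y(X,Z)>9\theta>\theta$, and (P2) guarantees that only finitely many $Y$ satisfy this.

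The only step requiring any genuine idea is (SP 3); everything else is bookkeeping with the two-sided comparison $d^\pi_Y-2\theta\le d_Y\le d^\pi_Y+2\theta$ from Lemma \ref{lem:second_perturbation} and the basic properties of $d^\pi_Y$. I would therefore present the proof by listing the five verifications in order, devoting a single line to each of (SP 1), (SP 2), (SP 4), (SP 5), and spending one short paragraph on (SP 3) invoking Lemma \ref{lem:stable}.
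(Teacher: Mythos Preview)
Your proposal is correct and follows essentially the same approach as the paper's proof, which simply reads: ``(SP 1) and (SP 2) are trivial. (SP 3) is exactly Lemma \ref{lem:stable} and (SP 5) follows from (P2) and Lemma \ref{lem:second_perturbation}. The other axioms are clear.'' You have unpacked each of these one-line claims with the obvious details (the supremum-and-triangle argument for (SP 2), the use of Lemma \ref{pretermination}(i) for (SP 4), etc.), and your identification of Lemma \ref{lem:stable} as the substance behind (SP 3) is exactly right.
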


\begin{proof}
(SP 1) and (SP 2) are trivial. 
(SP 3) is exactly Lemma \ref{lem:stable} and (SP 5) follows from
  (P2) and Lemma \ref{lem:second_perturbation}. The
  other axioms are clear.
\end{proof}

Lemma \ref{lem:second_perturbation} and Proposition \ref{axioms
  follow} complete the proof of Theorem \ref{thm:forcing}.

\section{Acylindrical examples}\label{sec.acyl}

In this section we apply Theorem \ref{acylindrical} to prove in concrete examples that the action on one of the projection complexes we constructed is acylindrical.

\subsection{Hyperbolically embedded subgroups}
 Let $G$ be a group and $H$ a subgroup. Fix a (possibly infinite) set
 $S \subset G$ such that $S \cup H$ generates $G$. Let $\Gamma(G,
 S\sqcup H)$ be the Cayley graph for this generating set; more precisely, we introduce double edges corresponding to elements in $S\cap H$ and regard every edge as labelled by the corresponding copy of a generator. We define a
 function $\hat d\colon H\times H \to [0,\infty]$ as follows. If $x,y
 \in H$ are connected by a path in $\Gamma(G, S\sqcup H)$ that does
 not contain any edges from $H$ we let $\hat d(x,y)$ be the length of
 the shortest such path. If there is no such path we let $\hat d(x,y)
 = \infty$. Then $H$ is {\em hyperbolically embedded} in $G$ (with
 respect to the generating set $S$) if
 \begin{enumerate}
 \item $\Gamma(G, S\sqcup H)$ is $\delta$-hyperbolic;
 \item $\hat d$ is proper.
 \end{enumerate}
 
 Each coset $aH$ in $\Gamma(G,S\sqcup H)$ consists of the vertices of a
 complete graph and when we refer to $aH$ as a subset of $\Gamma(G,
 S\sqcup H)$ we refer to this complete graph whose edges are labeled
 by the elements of $H$. A path $p$ in
 $\Gamma(G,S\sqcup H)$ {\em penetrates} the coset $aH$ if the
 intersection of $p$ with $aH$ contains a segment. Note that  every
 coset has diameter 1 so a geodesic that penetrates a coset will
 intersect it in exactly one segment.
 
 The following is a consequence of \cite[Proposition 4.13]{DGO}, but we provide a proof in the interest of self-containment.
 
  \begin{lemma}\label{ngon}
 There exists a $C>0$ such that the following holds. Suppose that we
 have a geodesic quadrilateral in $\Gamma(G, S\sqcup H)$ with sides
 $s, p_0,p_1,p_2$ so that $s$ is an edge in the coset $aH$ with
 endpoints $s_0,s_1$, and no $p_i$ penetrates $aH$.  Then $\hat
 d(s_0,s_1) \le C$.
 \end{lemma}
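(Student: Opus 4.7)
After translating on the left by $s_0^{-1}$, I may assume $aH = H$, $s_0 = e$, and $s_1 = h \in H$; the claim becomes a uniform upper bound on $\hat d(e, h)$ over all quadrilaterals of the given form. My plan is to combine $\delta$-hyperbolicity of $\Gamma(G, S \sqcup H)$ with properness of $\hat d$, via an argument by contradiction against a minimal counterexample.

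First, I would apply $2\delta$-thinness of the geodesic quadrilateral. Since $s$ has length one, the three remaining sides $p_0, p_1, p_2$ are forced to fellow-travel closely. To make this precise I would cut the quadrilateral along a geodesic diagonal (e.g.\ from $s_0$ to the corner $b$ between $p_1$ and $p_2$) and apply $\delta$-thinness of triangles to each of the two resulting triangles. This yields points $x \in p_0$ and $y \in p_2$ with $d_\Gamma(x,y) \le 2\delta$ such that the concatenation of the initial segment of $p_0$ from $s_0$ to $x$, a short geodesic from $x$ to $y$, and the terminal segment of $p_2$ from $y$ to $s_1$ is a path from $s_0$ to $s_1$ of length bounded by a function of $\delta$ alone, which still avoids edges in $aH$ since none of $p_0, p_2$ did and the short connecting geodesic can be chosen not to penetrate $aH$ (or if it does, we reduce to a smaller configuration).

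Next I would convert this bounded-length path, which a priori can still contain $H$-edges, into a path consisting only of $S$-edges, which is what $\hat d$ measures. Since the path has $O(\delta)$ edges, it contains only boundedly many $H$-edges, each lying in some coset $b_iH \ne aH$. Each such $H$-edge is itself the short side of a strictly smaller geodesic quadrilateral to which the hypotheses of the lemma apply, so I would replace it by a bounded $S$-path inductively, with the base case and uniformity of the bound coming from properness of $\hat d$ (which ensures that balls in $H$ for $\hat d$ are finite).

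The main obstacle is ensuring this inductive replacement procedure terminates with a single constant $C$ rather than one depending on the quadrilateral. I would formalize this via a minimal counterexample argument: among all quadrilaterals satisfying the hypotheses with $\hat d(s_0, s_1)$ arbitrarily large, take one minimizing the total perimeter $|p_0| + |p_1| + |p_2|$. The $\delta$-thin shortcut produced above must then either give a direct uniform bound on $\hat d(s_0, s_1)$, or produce a strictly smaller counterexample of the same form, contradicting minimality. The delicate point is verifying that the surgeries replacing individual $H$-edges genuinely reduce complexity; handling this cleanly is where properness of $\hat d$ does its essential work.
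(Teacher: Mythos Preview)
Your first step---using thinness of the quadrilateral to build a bounded-length path from $s_0$ to $s_1$ avoiding $aH$---is exactly the paper's approach, and once carried out correctly it is the \emph{entire} proof. Your second and third paragraphs rest on a misreading of $\hat d$: for $s_0,s_1\in aH$ one only needs a path avoiding the edges of the \emph{single coset} $aH$, not all $H$-labelled edges in $\Gamma(G,S\sqcup H)$. So once your short path avoids $aH$ you are finished; there is nothing to replace, no induction is needed, and properness of $\hat d$ plays no role whatsoever in this lemma (the constant $C$ depends only on $\delta$). In particular, the minimal-counterexample machinery and the appeal to properness should be deleted.

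The first step also has gaps as written. You assert that the subsegments of $p_0$ and $p_2$ appearing in your path have length bounded by a function of $\delta$, but you never choose $x$ so as to make this true, and the $2\delta$-close companion of $x$ might land on $p_1$ rather than on $p_2$. The paper handles both issues at once by picking a point $x_i$ on each of the two sides adjacent to $s$ at distance exactly $2\delta+2$ from $s_i$; this forces the nearby point $y_i$ off of $s$, and (since $aH$ has diameter $1$) keeps the short geodesic $[x_i,y_i]$ away from $aH$ automatically. A brief case analysis on where the $y_i$ land then yields the desired bounded path. Your fallback ``or if it does, we reduce to a smaller configuration'' does not work either: if $[x,y]$ enters $aH$ at some vertex $u$ you only get a bound on $\hat d(s_0,u)$, and there is no smaller quadrilateral in sight with which to bound $\hat d(u,s_1)$.
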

 
 \begin{proof}
We label the sides so that $p_2$ is opposite to $s$, and $s_i\in p_i$.
 Let $\delta$ be an integer so that $\Gamma(G,S\sqcup H)$ is
 $\delta$-hyperbolic. Recall that for a geodesic quadrilateral, the
 $2\delta$-neighborhood of any three sides contains the fourth. 
 
We consider the case when the lengths of $p_0$ and $p_1$ are
$>2\delta+2$, leaving the other (easier) cases to the reader. Let
$x_i$ be the vertex of $p_i$ at distance $2\delta+2$ from the endpoint
of $p_i$ that belongs to $aH$, and let $y_i$ be a vertex at distance
$\leq 2\delta$ from $x_i$ on a side of the quadrilateral distinct from
$p_i$. Note that necessarily $y_i\not\in s$ and further a geodesic
$[x_i,y_i]$ does not intersect $aH$.

If both $y_i$ belong to $p_2$ we have the path
$s_0,x_0,y_0,y_1,x_1,s_1$ made of segments in the quadrilateral and
geodesics $[x_i,y_i]$. It has length bounded by a function of $\delta$
and it is disjoint from $aH$ except at the endpoints.

If say $y_0\in p_1$, we have the path $s_0,x_0,y_0,s_1$ with the same
conclusion. 
 \end{proof}

 Given subsets of vertices $X$ and $Y$ define $\pi_X(Y)$ to be the set of all $x\in X$ so that $x$ is an endpoint of a geodesic that minimizes the distance between $X$ and $Y$.
 
 \begin{lemma}\label{penetrate}
 Let $aH$, $bH$ and $cH$ be distinct cosets. If every geodesic 
 that minimizes the distance between $aH$ and $cH$ penetrates $bH$ then $\pi_{cH}(aH) = \pi_{cH}(bH)$.
 \end{lemma}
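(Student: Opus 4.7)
The plan is to fix an arbitrary minimizing geodesic $\gamma$ from $aH$ to $cH$, with endpoints $x \in aH$ and $y' \in cH$. By hypothesis $\gamma$ penetrates $bH$, so it decomposes into three subsegments of lengths $a$, $1$, $b$ along vertices $x, u', v', y'$, with $[u', v']$ an edge of the complete graph $bH$ and $d(aH, cH) = a + 1 + b$. Both inclusions will come from cut-and-paste arguments that swap the tail of $\gamma$ past $bH$ for other minimizing geodesics.

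For the inclusion $\pi_{cH}(aH) \subseteq \pi_{cH}(bH)$, I would first show that the tail $v' \to y'$, of length $b$, is itself a minimizing geodesic from $bH$ to $cH$. Otherwise, a strictly shorter path between $bH$ and $cH$ could be concatenated with the head $x \to u'$ via at most one edge inside $bH$, producing a path $aH \to cH$ of length $< a + 1 + b$ and contradicting minimality of $\gamma$. In particular $d(bH, cH) = b$. Applying this with $\gamma$ chosen so that $y'$ is any given $y \in \pi_{cH}(aH)$ exhibits $y$ as an endpoint of a minimizing geodesic from $bH$ to $cH$, giving $y \in \pi_{cH}(bH)$.

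For the reverse inclusion, take $y \in \pi_{cH}(bH)$ realized by a minimizing geodesic $\sigma$ from some $u \in bH$ to $y$ of length $b$, and splice: travel $x \to u'$ along $\gamma$, take the single edge $u'u$ inside $bH$, then follow $\sigma$ from $u$ to $y$. The spliced path has length $a + 1 + b = d(aH, cH)$, so (since its length cannot be smaller than $d(x,y) \ge d(aH,cH)$) it is a minimizing geodesic from $aH$ to $cH$ ending at $y$, giving $y \in \pi_{cH}(aH)$. The only point requiring attention — and the main, though minor, obstacle — is to ensure $u \ne u'$, so that the edge $u'u$ genuinely contributes length $1$ rather than collapsing the splice into a shorter path. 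This is forced by comparing $d(u', cH) = 1 + b$ (if $d(u', cH) \le b$, prepending $x \to u'$ to a realizing path from $u'$ to $cH$ would yield a path $aH \to cH$ of length $\le a + b$, contradicting $\gamma$'s minimality) with $d(u, cH) = b$ (from minimality of $\sigma$ together with $d(u,cH) \ge d(bH,cH) = b$). No hyperbolicity or appeal to Lemma \ref{ngon} is needed; the argument is purely combinatorial.
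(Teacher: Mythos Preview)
Your proof is correct and follows essentially the same cut-and-paste strategy as the paper: decompose a minimizing geodesic $aH\to cH$ at its $bH$-segment, observe that the tail minimizes $bH\to cH$ (giving one inclusion), and splice an arbitrary minimizing $bH\to cH$ geodesic onto the head to get the other. The only difference is that you explicitly verify $u\neq u'$ via the distance comparison $d(u',cH)=b+1\neq b=d(u,cH)$, whereas the paper simply notes the spliced path has length at most $|p|$ and hence exactly $|p|$; your concern is legitimate but resolves more quickly by observing that a spliced path of length $a+b<d(aH,cH)$ is impossible, so the degenerate case cannot occur.
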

 
 \begin{proof}
 Let $p$ be a geodesic that minimizes the distance from $aH$ to $cH$ and that penetrates $bH$. In particular $p$ contains a single segment in $bH$. Decompose $p$ into three segments $p=p_0 p_1 p_2$ where $p_1$ is the segment in $bH$. Then $p_0$ is a geodesic that minimizes the distance from $aH$ to $bH$ and $p_2$ minimizes the distance from $bH$ to $cH$. In particular the terminal endpoint of $p$ will lie in $\pi_{cH}(bH)$ so $\pi_{cH}(aH) \subseteq \pi_{cH}(bH)$. 
 
 Given any other point $z \in \pi_{cH}(bH)$ we can find a minimizing geodesic $p'_2$ from $bH$ to $cH$ that has terminal endpoint $z$. Let $p'_1$ be a segment in $bH$ that has the same initial endpoint as $p_1$ and whose terminal endpoint is the initial endpoint of $p'_2$. Then $p_0p'_1p'_2$ is a path from $aH$ to $cH$ that has the same length as $p$ (because $p$ penetrates $bH$) and is therefore a minimizing geodesic. Therefore $z \in \pi_{cH}(aH)$ and $\pi_{cH}(bH) \subseteq \pi_{cH}(aH)$. 
 \end{proof}
 
 \begin{lemma}\label{diameter}
 If $aH\neq bH$ and $x,x' \in \pi_{aH}(bH)$ then $\hat d(x,x') \le C$,
 where $C$ is the constant from Lemma \ref{ngon}.
 \end{lemma}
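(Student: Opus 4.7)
The plan is to apply Lemma \ref{ngon} directly to a geodesic quadrilateral whose ``bottom'' $s$ is an edge in $aH$ from $x$ to $x'$, and whose opposite side lies inside $bH$.

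First I dispose of the trivial case $x=x'$, where $\hat d(x,x')=0\le C$. So assume $x \neq x'$. Since every coset in $\Gamma(G,S\sqcup H)$ is a complete graph of diameter $1$, there is an edge $s$ of $aH$ with endpoints $x$ and $x'$. By the definition of $\pi_{aH}(bH)$, there exist geodesics $p_0$ from $x$ to some $y\in bH$ and $p_1$ from $x'$ to some $y'\in bH$, each realizing the distance between $aH$ and $bH$. I complete the quadrilateral by letting $p_2$ be a path of length at most $1$ inside $bH$ joining $y$ to $y'$ (a single edge if $y\neq y'$).

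The key observation is that none of $p_0, p_1, p_2$ penetrates $aH$. Indeed, if $p_0$ contained an edge of $aH$, then deleting the initial segment of $p_0$ up to and including that edge would yield a strictly shorter path between $aH$ and $bH$, contradicting minimality; the same reasoning applies to $p_1$. Meanwhile $p_2$ lies entirely in $bH$, which shares no vertex with $aH$ since $aH\neq bH$. Hence the quadrilateral $s, p_0, p_2, p_1$ satisfies the hypotheses of Lemma \ref{ngon}, giving $\hat d(x,x')\le C$.

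The only mild subtlety is the degenerate case $y=y'$, which produces a geodesic triangle in place of a proper quadrilateral; but the argument of Lemma \ref{ngon} applies verbatim (triangles are $\delta$-thin), or equivalently one can formally take $p_2$ to be a trivial loop at $y$. No significant obstacle is anticipated beyond correctly verifying non-penetration, which follows from minimality as above.
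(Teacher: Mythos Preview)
Your proof is correct and follows essentially the same approach as the paper: take minimizing geodesics from $x$ and $x'$ to $bH$, close up with an edge in $aH$ and a segment in $bH$, and apply Lemma~\ref{ngon}. You are simply more explicit than the paper in verifying non-penetration and in handling the degenerate cases $x=x'$ and $y=y'$.
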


 \begin{proof}
 Let $p$ and $p'$ be geodesics that minimize the distance from $aH$ to $bH$ and have initial endpoints $x$ and $x'$, respectively. Connect the terminal endpoints of $p$ and $p'$ with segments to form a 4-gon. Then Lemma \ref{ngon} implies that $\hat d(x,x') \le C$.
 \end{proof}

The above lemma shows that there is a coarsely well-defined projection
$\Gamma\to aH$. Using Lemma \ref{ngon} it is easy to see that
geodesics that do not penetrate $aH$ have uniformly bounded image in
$aH$ with respect to the $\hat d$-metric, a version of the Bounded
Geodesic Image Theorem.
 
 Let
 $$d_{bH}(aH, cH) = \underset{x\in \pi_{bH}(aH), z\in\pi_{bH}(cH)} \sup \hat d(x,z).$$
 \begin{prop}\label{axiomshold}
 The collection of cosets $\{aH\}$ and the functions $d_{aH}$ satisfy
 the projection axioms (SP 1)-(SP 5) with $\theta =3C+1$.
 \end{prop}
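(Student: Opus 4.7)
The plan is to verify the five axioms one by one. Three of them are almost formal: (SP 1) is symmetry built into the sup; (SP 2) follows by fixing any $z \in \pi_Y(Z)$ (nonempty since $\Gamma(G, S\sqcup H)$ is connected) and pushing $\hat d(x,w) \le \hat d(x,z) + \hat d(z,w)$ through the suprema defining $d_Y(X,W)$; and (SP 4) holds because $d_Y(X,X)$ is the $\hat d$-diameter of $\pi_Y(X)$, which is at most $C \le \theta$ by Lemma \ref{diameter}. The substantive work is in (SP 3), and (SP 5) will come as an easy byproduct.

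For (SP 3) I will reduce to the following key claim:
\begin{quote}
$(\star)$ If $d_Y(X,Z) > 3C+1$, then every geodesic minimizing the distance between $X$ and $Z$ penetrates $Y$.
\end{quote}
Granting $(\star)$, Lemma \ref{penetrate} gives $\pi_Z(X) = \pi_Z(Y)$ as subsets of $Z$, and then $d_Z(X,W) = d_Z(Y,W)$ follows directly from the definition of $d_Z$ as a supremum. To prove $(\star)$ I argue by contradiction: suppose some minimizing geodesic $p$ from $x_0 \in X$ to $z_0 \in Z$ does not penetrate $Y$, and bound $\hat d(x,z) \le 3C$ for every $x \in \pi_Y(X)$ and $z \in \pi_Y(Z)$. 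The strategy is three applications of Lemma \ref{ngon}, linking $x$ to $z$ through a chain $x \to y_0 \to y_0' \to z$ of steps of $\hat d$-length at most $C$. First, choose $y_0 \in \pi_Y(x_0)$ and $y_0' \in \pi_Y(z_0)$ together with minimizing geodesics $q_{x_0}\colon x_0 \to y_0$ and $q_{z_0}\colon z_0 \to y_0'$. Such minimizing geodesics cannot penetrate $Y$, because an entry into $Y$ before the endpoint would shortcut the geodesic using the complete-graph structure of $Y$. The quadrilateral with sides $q_{x_0}$, edge $y_0 y_0'$ in $Y$, $q_{z_0}^{-1}$, and $p^{-1}$ satisfies the hypotheses of Lemma \ref{ngon}, yielding $\hat d(y_0, y_0') \le C$. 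Second, let $x_1 \in X$ witness $x \in \pi_Y(X)$ via a minimizing geodesic $p_X\colon x_1 \to x$; the quadrilateral with corners $x_0, y_0, x, x_1$ and sides $q_{x_0}$, edge $y_0 x$ in $Y$, $p_X^{-1}$, and the edge from $x_1$ to $x_0$ in $X$ (disjoint from $Y$) gives $\hat d(y_0, x) \le C$. A symmetric argument on the $Z$-side gives $\hat d(y_0', z) \le C$, and the triangle inequality then yields $\hat d(x,z) \le 3C < \theta$, contradicting $d_Y(X,Z) > \theta$.

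Axiom (SP 5) is then an easy corollary of $(\star)$: fix any single minimizing geodesic $p$ from $X$ to $Z$; if $d_Y(X,Z) > \theta$, then $p$ penetrates $Y$, which costs $p$ at least one edge labelled by $H$ lying inside $Y$. Since such $H$-edges uniquely determine their coset, distinct $Y$'s correspond to distinct edges of $p$, so only finitely many $Y$ occur. The main obstacle I anticipate is the bookkeeping of the degenerate configurations in the quadrilateral arguments---for instance when $x_0 = x_1$, when $y_0 = x$, or when $y_0 = y_0'$---but each such case either reduces the quadrilateral to a triangle, for which the proof of Lemma \ref{ngon} still goes through, or the desired bound is immediate from Lemma \ref{diameter} or is trivial.
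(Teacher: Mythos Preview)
Your proof is correct and follows essentially the same route as the paper: both verify (SP 1), (SP 2), (SP 4) formally, both reduce (SP 3) to the penetration claim $(\star)$ and then invoke Lemma \ref{penetrate}, and both derive (SP 5) from $(\star)$. The only cosmetic difference is in the proof of $(\star)$: the paper builds a single quadrilateral to get $\hat d(x',z')\le C$ for one specific pair $x'\in\pi_Y(X)$, $z'\in\pi_Y(Z)$ and then applies Lemma \ref{diameter} twice to reach the contradiction, whereas you build three quadrilaterals directly (your second and third are effectively re-proving a mild extension of Lemma \ref{diameter}).
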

 
 \begin{proof}
 Both (SP 1) and (SP 2) are clear and (SP 4) follows from Lemma \ref{diameter}. 
 
 For (SP 3), assume $d_{bH}(aH,cH)\geq 3C+1$ and let $p$ be a geodesic
 minimizing the distance between $aH$ and $cH$ with initial endpoint
 $x \in aH$ and terminal endpoint $z \in cH$. We will show that $p$
 penetrates $bH$ and then (SP 3) follows from Lemma
 \ref{penetrate}. Assume not and let $q_0$ be a geodesic that
 minimizes the distance from $x$ to $bH$ and let $x'$ be the terminal
 endpoint of $q_0$. We can assume that $x' \in \pi_{bH}(aH)$ for if
 not there will be a geodesic from $\pi_{aH}(bH)$ to $\pi_{bH}(aH)$
 that is shorter than $q_0$ and we can connect this geodesic to $x$
 with a segment in $aH$ to form a path from $x$ to $bH$ that is at
 most as long as $q_0$. As $q_0$ is minimizing this new path must be a
 geodesic. Similarly we can find a geodesic $q_2$ from $z$ to a point
 $z' \in \pi_{bH}(cH)$ that minimizes the distance between $z$ and
 $bH$. Note that neither $q_0$ nor $q_2$ have segments in $bH$. We
 then let $q_1$ be the segment in $bH$ between $x'$ and $z'$ and
 $p^{-1}q_0q_1q_2$ is a 4-gon that satisfies the conditions of Lemma
 \ref{ngon} so $\hat d(x',z') \le C$. But since $d_{bH}(aH,cH) \ge
 3C+1$ we have $\hat d(x',z') \ge C+1$ by Lemma \ref{diameter}, a contradiction.
 
 Given cosets $aH, bH$ and $cH$, by the previous paragraph if
 $d_{bH}(aH,cH) \ge 3C+1$ then every geodesic that minimizes the
 distance between $aH$ and $cH$ penetrates $bH$. Since any geodesic
 can only penetrate a finite number of cosets this proves (SP 5).
 \end{proof}
 
 The group $G$ acts on the set of cosets $\{aH\}$ by $g(aH) =
 (ga)H$. The coset $aH$ is fixed by the subgroup $H^a = aHa^{-1}$. 
 We will need the following result of Dahmani-Guirardel-Osin, which we prove for the convenience of the reader.
 
 \begin{prop}[\cite{DGO}]\label{boundedintersection}
 There is a uniform bound on $|H^a \cap H^b|$ over all distinct cosets $aH$, $bH$.
 \end{prop}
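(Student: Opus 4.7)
The plan is to reduce to the case $a=1$ (by conjugating), and then bound $|H \cap H^b|$ uniformly in $b$ by showing that every element of this intersection moves a chosen basepoint by a bounded $\hat d$-distance in $H$, at which point properness of $\hat d$ finishes the job.

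First I would fix a coset $bH \neq H$ and a point $x \in \pi_H(bH)$, which is non-empty because the distance from $H$ to $bH$ is attained in the graph $\Gamma(G, S \sqcup H)$. For any $g \in H \cap bHb^{-1}$, left multiplication by $g$ is an isometry of $\Gamma(G, S \sqcup H)$ that preserves the cosets $H$ and $bH$ setwise, hence permutes geodesics realizing the distance between these two cosets. In particular $gx \in \pi_H(bH)$, and both $x$ and $gx$ lie in $H$, so Lemma \ref{diameter} gives
$$\hat d(x, gx) \leq C.$$

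Next I would exploit the fact that left multiplication by elements of $H$ is an isometry of $\Gamma(G, S \sqcup H)$ sending $H$-edges to $H$-edges; this makes $\hat d$ left $H$-invariant, and together with the properness of $\hat d$ (built into the definition of a hyperbolically embedded subgroup) it follows that the set
$$\{h \in H : \hat d(x, h) \leq C\}$$
has cardinality $N(C)$ depending only on $C$, not on $x$ nor on $b$. Finally, the assignment $g \mapsto gx$ from $H \cap bHb^{-1}$ into this set is injective because $G$ acts freely on its own vertex set by left multiplication. Hence $|H \cap bHb^{-1}| \leq N(C)$, and conjugating back recovers the desired uniform bound on $|H^a \cap H^b|$.

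The only minor wrinkle I anticipate is verifying that $g$ actually preserves the projection $\pi_H(bH)$ as a set, but this is immediate from $g$ being an isometry of the ambient graph that permutes both $H$ and $bH$. Everything else is bookkeeping: the real content is packaging properness of $\hat d$ and Lemma \ref{diameter} into a uniform count.
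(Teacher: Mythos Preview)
Your proof is correct and follows essentially the same route as the paper: reduce to one coset being $H$, show that every element of the intersection moves a chosen point in $H$ by $\hat d$-distance at most $C$, and conclude by properness of $\hat d$. The only cosmetic difference is that you invoke Lemma~\ref{diameter} applied to a point $x\in\pi_H(bH)$ and its translate $gx$, whereas the paper applies Lemma~\ref{ngon} directly to an explicit quadrilateral built from a geodesic $\gamma$ from $1$ to a coset representative and its $h$-translate; since Lemma~\ref{diameter} is itself an immediate consequence of Lemma~\ref{ngon}, the two arguments are really the same.
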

 
 \begin{proof}
 We can assume $b=1$. Also, up to multiplying $a$ on the left by an
 element of $H$, we can assume that a geodesic $\gamma$ in $\Gamma(G, S \sqcup H)$
 from $1$ to $a$ does not intersect $H$ except
 for $1$.
  If $h\in H\cap H^a$ then there exists a quadrilateral where two
  opposite sides are $H$-translates of $\gamma$, one side consists of
  the edge in $H$ from $1$ to $h$, and a side is contained in $aH$. By
  Lemma \ref{ngon}, $\hat d(1,h) \le C$, and we are done by local
  finiteness of $\hat d$. 
 \end{proof}

 This implies that the stabilizer of two distinct cosets will have
 uniformly bounded size.

 The following theorem then
 follows from Theorems \ref{quasi-tree} and \ref{acylindrical}.
 
 \begin{thm}[{Balasubramanya {\cite{Balas}}}]\label{balas2}
 Suppose $G$ contains an infinite hyperbolically embedded subgroup
 $H$ of infinite index. Then  $G$
 has a non-elementary acylindrical action on the quasi-tree $\cP_K(\{aH\})$, where $H$ is a hyperbolically embedded infinite subgroup of $G$ of infinite index, $K$ is large enough, and the projections are defined above.
 \end{thm}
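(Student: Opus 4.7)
The plan is to assemble the results of this section and invoke Theorems \ref{quasi-tree}, \ref{acylindrical} and \ref{freegroup}. Proposition \ref{axiomshold} verifies that $\bY=\{aH\}$ with the projection distances $d_{aH}$ satisfies axioms (SP 1)--(SP 5) with projection constant $\theta=3C+1$, where $C$ is the constant from Lemma \ref{ngon}. Fix $K\ge 3\theta$. Then Theorem \ref{quasi-tree} directly gives that $\cP_K(\bY)$ is a quasi-tree. The group $G$ acts on the set of left cosets by left multiplication, which is a label-preserving isometry of $\Gamma(G,S\sqcup H)$ and carries $\pi_{aH}$ to $\pi_{gaH}$ equivariantly; hence each $d_{aH}$ is $G$-invariant and $G$ acts isometrically on $\cP_K(\bY)$.

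For acylindricity, the stabilizer of the vertex $aH$ is exactly the conjugate $H^a=aHa^{-1}$, and Proposition \ref{boundedintersection} uniformly bounds $|H^a\cap H^b|$ by some $B$ for $aH\ne bH$. Thus Theorem \ref{acylindrical} applied with $N=2$ yields acylindricity of the action on $\cP_K(\bY)$.

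For non-elementarity, the plan is to apply Theorem \ref{freegroup} with $Y=H$, producing an explicit free subgroup of $G$ with QI-embedded orbit on $\cP_K$, hence two independent loxodromic elements. Since $[G:H]=\infty$, the vertex set $\bY$ is infinite, so the infinite quasi-tree $\cP_K$ has infinite diameter and the orbit of $H$ is unbounded. Pick $g\in G\setminus H$; by Lemma \ref{diameter} the projections $\pi_H(gH)$ and $\pi_H(g^{-1}H)$ have $\hat d$-diameter at most $C$. Using that $\hat d$ is proper and $H$ is infinite, one can select triples $h_0,h_1,h_2\in H$ and $h_0',h_1',h_2'\in H$ whose pairwise differences have arbitrarily large $\hat d$-norm. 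Setting $g_i=h_i g h_i'$, left-$H$-equivariance of the Cayley graph isometries gives $\pi_H(g_iH)=h_i\pi_H(gH)$ and $\pi_H(g_i^{-1}H)=h_i'^{-1}\pi_H(g^{-1}H)$, so the inequalities $d_H(g_iH,g_jH)>K$ and $d_H(g_i^{-1}H,g_j^{-1}H)>K$ required by Theorem \ref{freegroup} reduce to $\hat d$-norm lower bounds on suitably conjugated differences of the $h_i$ and $h_i'$.

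The main obstacle is the interplay between distinctness and separation of the six cosets $\{g_iH,g_i^{-1}H\}$. Distinctness amounts to $h_j^{-1}h_i\notin gHg^{-1}$ and $h_j'(h_i')^{-1}\notin g^{-1}Hg$ for $i\ne j$; the intersections $H\cap gHg^{-1}$ and $H\cap g^{-1}Hg$ are finite by Proposition \ref{boundedintersection}, and elements of $H$ of sufficiently large $\hat d$-norm generically avoid these finite sets. A careful but routine choice then enforces both distinctness and the projection inequalities simultaneously, and Theorem \ref{freegroup} produces the desired free subgroup, completing the proof of non-elementarity.
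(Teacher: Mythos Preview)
Your proof is correct and follows the same overall strategy as the paper: Proposition \ref{axiomshold} plus Theorems \ref{quasi-tree} and \ref{acylindrical} (with $N=2$ via Proposition \ref{boundedintersection}) give the acylindrical quasi-tree action, and non-elementarity comes from Theorem \ref{freegroup}.

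The only difference is in the construction of the $g_i$. The paper picks $s\in S\setminus H$ (which exists since $S\cup H$ generates $G\neq H$) and sets $g_i=h_i s h_i$; then $g_iH=h_isH$ is at Cayley-distance $1$ from $h_i\in H$, so $h_i\in\pi_H(g_iH)$ and likewise $h_i^{-1}\in\pi_H(g_i^{-1}H)$, and the hypotheses of Theorem \ref{freegroup} become literally $\hat d(h_i,h_j)>K$ and $\hat d(h_i^{-1},h_j^{-1})>K$. Your choice of an arbitrary $g\in G\setminus H$ and $g_i=h_ig h_i'$ works too, but it forces you to chase where $\pi_H(gH)$ sits inside $H$ and to phrase things in terms of ``suitably conjugated differences''; the paper's trick of using a single generator $s$ avoids this bookkeeping entirely. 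Also, your sentence about $\cP_K$ having infinite diameter because $\bY$ is infinite is not needed (and not obviously true a priori): unboundedness is a \emph{consequence} of the QI-embedded free orbit you produce, not an input to the argument. Finally, the distinctness of the cosets $g_iH$ that you worry about in the last paragraph is automatic once $d_H(g_iH,g_jH)>K\ge\theta$, by (SP~4).
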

 
 \begin{proof}
 Since we have Proposition \ref{axiomshold}
 and Proposition \ref{boundedintersection},
 it follows from Theorems \ref{quasi-tree} and \ref{acylindrical} that
 $G$ has an acylindrical action on the quasi-tree $\cP_K(\{aH\})$. 
 
 To see that the action is non-elementary, we observe that $H$ acts transitively on
 itself and since $H$ is infinite and $\hat d$ is proper for
 any $K \ge 0$ we can find $h_0=1,h_1,h_2\in H$
 such that $\hat d(h_i,h_j)
 > K$, $\hat d(h_i^{-1}, h_j^{-1}) >K$.
 
 Now choose $s\in S \backslash H$, where 
 $S$ is the set that is used for $\Gamma(G,S \sqcup H)$.
 Set $g_i=h_i s h_i$.
 Then $h_i^\pm \in \pi_{H} (g_i^\pm H)$.
 And we can apply Theorem
 \ref{freegroup} to $H$ and $g_0,g_1,g_2$.
 \end{proof}
 
 A group $G$ is \emph{acylindrically hyperbolic} if it acts acylindrically and non-elementarily on a hyperbolic space, and it is proven in \cite{Osin} that this is equivalent to containing an infinite hyperbolically embedded subgroup of infinite index.
 The following theorem immediately from Theorem \ref{balas2}
 since the action on the projection complex $\cP_K(\{aH\})$ has only one vertex
 in the quotient.
 
 \begin{thm}[\cite{Balas}]\label{balas}
 If $G$ is acylindrically hyperbolic then $G$
 has a co-bounded,  non-elementary acylindrical action on a quasi-tree.
 \end{thm}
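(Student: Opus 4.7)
The plan is to deduce Theorem \ref{balas} essentially as a direct corollary of Theorem \ref{balas2}, together with Osin's characterization of acylindrically hyperbolic groups. So the only real content beyond what has already been done is checking the \emph{co-bounded} part of the conclusion.

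First I would invoke the result of \cite{Osin} quoted just above the statement: a group $G$ is acylindrically hyperbolic if and only if it contains an infinite hyperbolically embedded subgroup $H$ of infinite index. Fix such an $H$, together with the relative generating set $S$ that witnesses the hyperbolic embedding. Then Proposition \ref{axiomshold} gives the projection axioms (SP 1)--(SP 5) for the collection $\bY=\{aH\}$ of left cosets, and Proposition \ref{boundedintersection} provides the uniform bound on pairwise intersections of conjugates that feeds into Theorem \ref{acylindrical}. Applied together with Theorem \ref{quasi-tree}, Theorem \ref{balas2} now yields a non-elementary acylindrical action of $G$ on the quasi-tree $\cP_K(\{aH\})$ for $K$ large enough.

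It remains to verify co-boundedness, which is the one ingredient in the statement of Theorem \ref{balas} that is not literally part of Theorem \ref{balas2}. The key observation is that the vertex set of $\cP_K(\{aH\})$ is the set of left cosets of $H$, and $G$ acts on this set by left multiplication transitively: for any two cosets $aH$ and $bH$, the element $ba^{-1}\in G$ carries the first to the second. Consequently the quotient $\cP_K(\{aH\})/G$ has exactly one vertex, so every vertex of $\cP_K(\{aH\})$ lies in the $G$-orbit of the basepoint $H$, and every point of $\cP_K(\{aH\})$ lies within distance $\tfrac{1}{2}$ of such a vertex. This is precisely co-boundedness of the action.

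I do not expect any real obstacle: the non-elementarity and acylindricity have been packaged in Theorem \ref{balas2}, and co-boundedness is immediate from the transitive $G$-action on cosets. The only mild care needed is to notice that Theorem \ref{balas2} requires $H$ to have infinite index (used to ensure enough distinct cosets), which is exactly what Osin's characterization supplies, so no extra hypothesis on $G$ beyond acylindrical hyperbolicity is required.
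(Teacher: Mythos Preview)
Your proposal is correct and matches the paper's own argument essentially verbatim: the paper simply says the theorem follows immediately from Theorem \ref{balas2} since the action on $\cP_K(\{aH\})$ has only one vertex in the quotient, which is exactly your transitivity-of-left-multiplication observation for co-boundedness. Your re-invocation of Propositions \ref{axiomshold} and \ref{boundedintersection} is redundant (they are already absorbed into Theorem \ref{balas2}), but harmless.
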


\subsection{Mapping class groups}\label{sec.mcg.acyl}

In this subsection we assume that the reader is familiar with the theory of curve complexes and subsurface projections, as developed in \cite{MasurMinsky1,MasurMinsky2}.
Let $\Sigma$ be closed connected oriented surface with finitely many punctures, and supporting a finite-area hyperbolic metric. We will consider the collection $\bY=\{\cC(Y)\}$ of all curve complexes of isotopy classes of subsurfaces $Y$ of $\Sigma$ obtained cutting $\Sigma$ along a non-separating simple closed curve. Two such subsurfaces $Y,Z$ overlap if and only if they are not isotopic, so that, just as in \cite[Page 6]{BBF}, in view of of results in \cite{MasurMinsky2} and \cite{Behrstock} we have that $\bY$ with subsurface projections satisfies axioms (P0)-(P2).

\begin{thm}
 For $\bY$ as above, $MCG(\Sigma)$ acts acylindrically and
 non-elementarily on $\cP_K(\bY)$ for all sufficiently large $K$.
\end{thm}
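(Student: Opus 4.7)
I propose applying the machinery of Theorems \ref{thm:forcing}, \ref{quasi-tree}, and \ref{acylindrical} directly, supplemented by two classical results in surface topology. Axioms (P0)--(P2) are established immediately before the statement, and (P3) (symmetry) and (P4) (triangle inequality) are automatic from the definition $d_Y^\pi(X,Z)=\diam(\pi_Y(X)\cup\pi_Y(Z))$. Theorem \ref{thm:forcing} therefore produces $MCG(\Sigma)$-equivariant functions $d_Y$ satisfying (SP 1)--(SP 5) with projection constant $11\theta$, and Theorem \ref{quasi-tree} shows that $\cP_K(\bY)$ is a quasi-tree for every $K\geq 33\theta$.

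The heart of the argument is the stabilizer hypothesis of Theorem \ref{acylindrical}. Each $Y\in\bY$ has the form $\Sigma\setminus\gamma_Y$ for a non-separating simple closed curve $\gamma_Y$, and the $MCG(\Sigma)$-stabilizer of $Y$ is precisely the setwise stabilizer of the isotopy class of $\gamma_Y$. Given distinct $Y_{i_1}<\cdots<Y_{i_N}$ in some $\bY_K(X,Z)$, Proposition \ref{order} yields $d_{Y_{i_j}}(Y_{i_1},Y_{i_N})=d_{Y_{i_j}}(X,Z)>K$ for each of the $N-2$ intermediate indices $1<j<N$. By the comparison $d_Y^\pi-2\theta\leq d_Y\leq d_Y^\pi+2\theta$ from Theorem \ref{thm:forcing}, the corresponding original subsurface projection distances $d^\pi_{Y_{i_j}}(\gamma_{i_1},\gamma_{i_N})$ are also at least $K-2\theta$. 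The lower-bound direction of the Masur--Minsky distance formula (equivalently, iteration of the Bounded Geodesic Image Theorem along a hierarchy path) then forces $d_{\cC(\Sigma)}(\gamma_{i_1},\gamma_{i_N})\geq cN-C$ for positive constants $c,C$ depending only on $\Sigma$ and $K$. For $N$ sufficiently large this exceeds the threshold in Bowditch's acylindricity theorem for $MCG(\Sigma)\text{ acting on }\cC(\Sigma)$, so the common stabilizer of the ordered pair $(\gamma_{i_1},\gamma_{i_N})$ in $MCG(\Sigma)$---which contains the common stabilizer of all $N$ of the $Y_{i_j}$---has cardinality at most a uniform constant $B$. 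Theorem \ref{acylindrical} then delivers acylindricity.

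For non-elementarity, fix two independent pseudo-Anosov elements $f,g\in MCG(\Sigma)$ and any base subsurface $Y\in\bY$. The orbit $\{f^n\gamma_Y\}$ is a quasi-geodesic in $\cC(\Sigma)$, and by Masur--Minsky there are linearly (in $n$) many subsurfaces $W$ with $d^\pi_W(\gamma_Y,f^n\gamma_Y)>K+2\theta$, hence with $W\in\bY_K(Y,f^n Y)$. Corollary \ref{distance} then gives $d_{\cP_K}(Y,f^n Y)\geq c'n$, so $f$ is loxodromic on $\cP_K(\bY)$, and similarly for $g$. Independence of $f$ and $g$ in $MCG(\Sigma)$ means their quasi-axes in $\cC(\Sigma)$ have disjoint pairs of endpoints at infinity; pulling back through the coarsely equivariant assignment $Y\mapsto\gamma_Y$, the axes of $f$ and $g$ in $\cP_K(\bY)$ likewise have distinct pairs of fixed points at infinity, witnessing non-elementarity.

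The main obstacle is the stabilizer estimate in the middle paragraph: the projection-complex framework is essentially plug-and-play once the axioms are verified, but the genuinely surface-topological input---passing from ``many subsurfaces with large projection distance'' to ``curves far in $\cC(\Sigma)$'' via Masur--Minsky, and then bounding the joint stabilizer via Bowditch's acylindricity---is where the real work takes place.
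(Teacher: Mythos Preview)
Your acylindricity argument is correct but takes a genuinely different route from the paper's. The paper fixes $N=4$: given $\cC(X_0)<\cC(Y_0)<\cC(Y_1)<\cC(X_1)$ in $\bY_K(X,Z)$, it shows directly that the four boundary curves $\partial X_0,\partial Y_0,\partial Y_1,\partial X_1$ fill $\Sigma$ (any essential curve $c$ projects to $\cC(Y_0)$ or $\cC(Y_1)$, and since $\partial X_0$ and $\partial Y_1$ have far-apart projections there, $c$ must cross one of them), whence the common stabilizer is finite of uniformly bounded order. Your approach instead lets $N$ be large, uses the Bounded Geodesic Image Theorem to force a $\cC(\Sigma)$-geodesic from $\gamma_{i_1}$ to $\gamma_{i_N}$ through each intermediate $\gamma_{i_j}$, and then invokes Bowditch's acylindricity of $MCG(\Sigma)\curvearrowright\cC(\Sigma)$. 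Both work; the paper's argument is more elementary and self-contained (no appeal to Bowditch), while yours is a clean black-box reduction.

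Your non-elementarity argument, however, has a real gap. The claim that for a pseudo-Anosov $f$ there are linearly many $W\in\bY$ with $d^\pi_W(\gamma_Y,f^n\gamma_Y)>K+2\theta$ does not follow from the distance formula and is false in general: in the distance formula the term for $W=\Sigma$ alone already accounts for $d_{\cC(\Sigma)}(\gamma_Y,f^n\gamma_Y)\asymp n$, and a pseudo-Anosov with bounded combinatorics has $\sup_n\sup_{W\subsetneq\Sigma} d^\pi_W(\gamma_Y,f^n\gamma_Y)<\infty$. For such $f$ and for $K$ exceeding that bound, $\bY_K(Y,f^nY)=\emptyset$ for all $n$, so $d_{\cP_K}(Y,f^nY)\le 1$ and $f$ is elliptic on $\cP_K(\bY)$. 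The paper instead appeals to Theorem~\ref{freegroup}: one manufactures $g_0,g_1,g_2$ with large $d_Y(g_i^{\pm}Y,g_j^{\pm}Y)$ by hand (for instance using a partial pseudo-Anosov supported on $Y$ composed with an element moving $Y$), which yields a free subgroup acting faithfully with quasi-isometrically embedded orbit.
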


\begin{proof}
 By Theorem \ref{acylindrical}, it suffices to show that, for some sufficiently large $K$, if $\bY_K(X,Z)$ contains 4 distinct elements $\cC(X_0)<\cC(Y_0)<\cC(Y_1)<\cC(X_1)$ then the common stabilizer of $X_0,Y_0,Y_1,X_1$ is finite (finite subgroups of $MCG(\Sigma)$ have bounded cardinality). Since the stabilizer of the (isotopy class of the) subsurfaces we are considering coincides with the stabilizer of (the isotopy class of) either of its boundary components, it suffices to show that $\partial X_0,\partial Y_0,\partial Y_1,\partial X_1$
 fill the subsurface (if $K$ is large
 enough). Consider any essential simple closed curve $c$, and let us
 show that it intersects the boundary of one of the subsurfaces. Up to
 switching $X$ and $Z$ and re-indexing, we can assume that $c$ is not
 parallel to $\partial Y_0$, so that $c$ has a well-defined subsurface
 projection to $\cC(Y_0)$.  Since $\partial X_0$ and $\partial Y_1$
 have far away subsurface projection to $\cC(Y_0)$, $c$ must intersect
 one of them. The action is nonelementary by
 Theorem \ref{freegroup}; the details are left to the reader. 
\end{proof}

\begin{remark} In general, the action of $MCG(\Sigma)$ on standard projection
  complexes is not acylindrical. For example, say $\Sigma$ has genus 5
  and consider the action of $MCG(\Sigma)$ on $\cP_K(\bY)$ where $\bY$
  is the collection of genus 3 subsurfaces with 1 boundary
  component. Choose a nonseparating curve $a$ on $\Sigma$. Then the
  Dehn twist in $a$ and its powers fix all subsurfaces in the
  complement of $a$, so it suffices to show that the set $\bY_a$ of
  elements of $\bY$ disjoint from $a$ form an unbounded set. Choose
  $Y\in\bY_a$ and $f\in MCG(\Sigma)$ that fixes $a$ and is
  pseudo-Anosov on $\Sigma\smallsetminus a$ and so that the distance
  in $Y$ between the stable and unstable laminations of $f$ is large
  compared to $K$. Then the set $\{f^N(Y)\mid N\in\Z\}$ is unbounded
  in $\cP_K(\bY)$ ($f$ acts as a loxodromic isometry).
\end{remark}

\subsection{WPD elements and $B$-contracting geodesics}

We only sketch this application. For details, the reader is directed
to \cite[Section 4]{BBF.quasi.tree}.

Let $X$ be a geodesic metric space. Assume that a group
$G$ acts on $X$ by isometries. Let $f\in G$ be a hyperbolic element
(i.e. the translation length is positive). For convenience, we will
also assume that $f$ acts as a translation on a geodesic line
$\gamma\subset X$. Assume further that $\gamma$ is {\it strongly
  contracting}, i.e. the image under the nearest point projection
$p:X\to\gamma$ (which is in general a multivalued map) of any metric
ball disjoint from $\gamma$ has uniformly bounded diameter. This
implies that there is a subgroup $EC(f)$, which is virtually cyclic,
such that if $g\in EC(f)$ then $g(\gamma)$ and $\gamma$ have finite
Hausdorff distance, and if $g\not\in EC(f)$ then $p(g(\gamma))$ has
uniformly bounded diameter. For convenience we will assume that
$EC(f)$ leaves $\gamma$ invariant. Both assumptions made for
convenience can be removed, at the expense of making the definitions
below more complicated, or else replacing $X$ by a quasi-isometric
space where these assumptions hold.

Let $\bY$ be the set of $G$-translates of $\gamma$. For $A,B,C\in\bY$
define $d^\pi_B(A,C)=\diam p_B(A)\cup p_B(C)$ where $p_B:X\to B$ is
the nearest point projection to $B$.

\begin{thm}\label{qtree.acyl}
The set $\bY$ and the functions $d_B^\pi, B \in \bY$
satisfy (P0) - (P4) for some $\theta>0$.
Hence, $d_Y$ satisfy (SP 1) - (SP 5) for $10 \theta$.
Thus for a sufficiently large $K$,
$\cP_K(\bY)$ is a quasi-tree on which $G$ acts.
Moreover, the action is acylindrical.
\end{thm}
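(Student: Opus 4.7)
The plan is to reduce the theorem to the general machinery already established. First I would verify that $\bY$ with the projections $d^\pi_B$ satisfies the axioms (P0)–(P4) for some $\theta > 0$. Then Theorem \ref{thm:forcing} yields modified distances $d_Y$ satisfying (SP 1)–(SP 5), Theorem \ref{quasi-tree} gives that $\cP_K(\bY)$ is a quasi-tree for all sufficiently large $K$, and acylindricity is obtained from Theorem \ref{acylindrical} once one controls common stabilizers of pairs in $\bY$.

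Axioms (P3) and (P4) are immediate from the definition of $d^\pi_B$ as a diameter of a union. For (P0), if $X = g_1\gamma$ and $Y = g_2\gamma$ are distinct elements of $\bY$, then $g_2^{-1}g_1 \notin EC(f)$, so by hypothesis $p_\gamma(g_2^{-1}g_1\gamma)$ has uniformly bounded diameter; equivariance gives that $p_Y(X)$ is uniformly bounded, and in particular $d_Y^\pi(X,X) \le \theta$ for $\theta$ at least this uniform bound. The Behrstock inequality (P1) is the substantive point and is a standard consequence of strong contraction: if $d^\pi_B(A,C)$ is large, then any geodesic between a point of $A$ and a point of $C$ must pass close to $B$ and fellow-travel a long sub-arc of $B$. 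Strong contraction applied at $A$ then forces $p_A(B)$ and $p_A(C)$ to coarsely coincide, so $d^\pi_A(B,C)$ is bounded. Finiteness (P2) follows by the same fellow-traveling mechanism: each $B$ with $d^\pi_B(A,C) > \theta$ forces a definite amount of length along any geodesic from $A$ to $C$, so only finitely many can occur. All of these estimates are carried out in detail in \cite[Section 4]{BBF.quasi.tree}.

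With (P0)–(P4) in hand, Theorem \ref{thm:forcing} provides the promised $d_Y$ satisfying (SP 1)–(SP 5) and Theorem \ref{quasi-tree} yields the quasi-tree $\cP_K(\bY)$ for $K$ sufficiently large. For acylindricity I verify the hypothesis of Theorem \ref{acylindrical} with $N=2$. Under the running assumption that $EC(f)$ leaves $\gamma$ invariant, the setwise stabilizer of $h\gamma$ in $G$ is exactly $h\,EC(f)\,h^{-1}$, because any element stabilizing $h\gamma$ conjugates $\gamma$ to a translate whose projection to $\gamma$ is unbounded, forcing it to lie in $EC(f)$ up to the action of $h$. For distinct $h_1\gamma, h_2\gamma$ any element of $h_1\,EC(f)\,h_1^{-1}\cap h_2\,EC(f)\,h_2^{-1}$ quasi-fixes points on both axes, and the WPD property of $f$ (implicit in the strong contraction hypothesis together with the assumption that $f$ is hyperbolic) gives a uniform upper bound on the cardinality of this intersection. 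Theorem \ref{acylindrical} then applies.

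The main obstacle will be the careful derivation of (P1) with a constant $\theta$ depending only on the contraction constant of $\gamma$; this is where strong contraction, rather than a weaker Morse-like property, is genuinely needed. The remaining steps are routine applications of the theorems already proved, together with the uniform WPD-type bound on intersections of conjugates of $EC(f)$.
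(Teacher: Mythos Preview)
Your proposal is correct and matches the paper's approach: the paper gives no proof of this theorem at all, only the setup, and explicitly defers the verification of (P0)--(P4) to \cite[Section 4]{BBF.quasi.tree}, with the rest following from Theorems \ref{thm:forcing}, \ref{quasi-tree}, and \ref{acylindrical} exactly as you indicate. One small remark: your appeal to ``the WPD property of $f$ (implicit in the strong contraction hypothesis)'' for the uniform bound on pairwise stabilizer intersections is slightly roundabout---a cleaner route is to note that an element stabilizing two distinct translates $h_1\gamma, h_2\gamma$ must preserve the uniformly bounded set $p_{h_1\gamma}(h_2\gamma)$, hence has uniformly bounded translation length on $h_1\gamma$, and there are only boundedly many such elements in the virtually cyclic group $h_1\,EC(f)\,h_1^{-1}$.
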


\section{Quasi-trees of metric spaces}
We now return to the setup of Section \ref{sec:forcing} with a
collection of metric spaces $\bY = \{(Y, \rho_Y)\}$, projections
$\pi_Y$ with (P 0') and metrics $d_Y$, obtained from 
$\pi_Y$,  satisfying axioms (P0)-(P4). 

Here, we will
make the extra assumption that 
{\bf metric spaces are graphs with each edge
having length one}. 

\subsection{Construction}
Using Theorem \ref{thm:forcing} and Lemma \ref{lem:stable}
we can modify the
projections (and suitably increase $\theta$) to replace (P1) with

\medskip
\noindent
{\bf (P1')} if $d_Y(X,Z) > \theta$ then $\pi_X(Y) = \pi_X(Z)$.

\medskip
The functions $d_Y$ then satisfy the projection axioms (SP 1)-(SP 5).

As in \cite{BBF} we build the quasi-tree of metric spaces $\cC_K(\bY)$
by taking the union of the metric spaces in $\bY$ with an edge of
length $K>0$ 
connecting every pair of points in $\pi_Y(X)$ and
$\pi_X(Y)$ if $d_{\cP_K(\bY)}(X,Y) = 1$. For convenience, we will
assume that $\theta$ and $K$ are integers.

We define a metric $\rho$ (that is possibly infinite) on the disjoint
union of elements of $\bY$ by setting $\rho(x_0,x_1) =
\rho_X(x_0,x_1)$ if $x_0,x_1\in X$, for some $X\in\bY$, and
$\rho(x_0,x_1) = \infty$ if $x_0$ and $x_1$ are in different spaces in
$\bY$. Assume that the group $G$ acts isometrically on $\bY$ with this
metric and that the projections $\pi_X$ are $G$-invariant,
i.e. $\pi_{gX}(gY) = g(\pi_X(Y))$. Then $G$ acts isometrically on
$\cC_K(\bY)$. We will give conditions for this action to be
acylindrical.

In what follows we will adopt the convention that
{\bf  lower case letters
will refer to vertices in $\cC_K(\bY)$ with the corresponding upper
case letter denoting the metric space in $\bY$ that contains the
vertex}.  

It will be convenient to extend the definition of the projections
$\pi_Y$ to vertices in $\cC_K(\bY)$. For a vertex $x \in \cC_K(\bY)$
we set $\pi_Y(x) = \pi_Y(X)$ if $X\neq Y$. If $X=Y$ then $\pi_Y(x) =
\{x\}$. We then set $d_Y(x,z) = \diam\pi_Y(x)\cup \pi_Y(z)$ for $x, z
\in \cC_K(\bY)$. 
We also define
$$\bY_L(x,z) = \{Y \in \bY | d_Y(x,z) > L\}$$
and observe that it is possible for $X$ or $Z$ to be in $\bY_L(x,z)$.

To save notation, when $x$ and $z$ are vertices of $\cC_K(\bY)$, we denote by $d_\cC(x,z)$ their distance in $\cC_K(\bY)$.

First of all, we prove a coarsely Lipschitz property of projections:

\begin{lemma}\label{complex_lipschitz}
Assume that $K>\theta$. Let $x,z$ be vertices of $\cC_K(\bY)$ and let
$Y\in\bY$.  If $d_\cC(x,z) \ge \theta$ then $d_Y(x,z) \le
d_\cC(x,z)$. If $d_\cC(x,z) \leq \theta$ then $d_Y(x,z)\le\theta$.
\end{lemma}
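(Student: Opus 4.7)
The plan is to analyze a geodesic $x = w_0, w_1, \ldots, w_n = z$ in $\cC_K(\bY)$ realizing $d_\cC(x,z)$ and decompose it into maximal within-space segments in spaces $X_0, X_1, \ldots, X_m$, of lengths $\ell_0, \ldots, \ell_m \ge 0$, separated by cross-edges of length $K$; so $d_\cC(x,z) = \sum_j \ell_j + mK$. Two structural observations drive the argument: within a segment in $X_j \neq Y$, $\pi_Y$ is constant equal to $\pi_Y(X_j)$, so $d_Y(x, \cdot)$ does not change across such a segment; and for a cross-edge between $X_j, X_{j+1}$ (adjacent in $\cP_K(\bY)$, giving $d_Y(X_j, X_{j+1}) \leq K$ when $Y \notin \{X_j, X_{j+1}\}$), the endpoints $v_j, u_{j+1}$ satisfy $v_j \in \pi_{X_j}(X_{j+1})$ and $u_{j+1} \in \pi_{X_{j+1}}(X_j)$.

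The case $m=0$ is direct: the geodesic stays in some $X$, and either $Y = X$ gives $d_Y(x,z) = \rho_X(x,z) = d_\cC(x,z)$, or $Y \neq X$ gives $d_Y(x,z) = \diam \pi_Y(X) \leq \theta$. Since any cross-edge alone contributes $K > \theta$ to $d_\cC$, the regime $d_\cC(x,z) \leq \theta$ forces $m = 0$ and is subsumed. For $m \geq 1$ (hence $d_\cC \geq K > \theta$), the crucial preliminary step is to show the geodesic visits $Y$ at most once: assuming $X_{j_0} = X_{j_1} = Y$ with $j_0 < j_1$, the sub-geodesic from $v_{j_0}$ to $u_{j_1}$ has length $\geq (j_1 - j_0)K$ and is bounded above by $\rho_Y(v_{j_0}, u_{j_1})$ because any within-$Y$ path is a valid path in $\cC_K(\bY)$; on the other hand, from $v_{j_0} \in \pi_Y(X_{j_0+1})$, $u_{j_1} \in \pi_Y(X_{j_1-1})$, and telescoping through $X_{j_0+1}, \ldots, X_{j_1-1}$ (none equal to $Y$, with the case $j_1 - j_0 = 2$ handled by $\diam \pi_Y(X_{j_0+1}) \leq \theta < K$) one obtains $\rho_Y(v_{j_0}, u_{j_1}) \leq (j_1 - j_0 - 2)K$, contradicting $(j_1 - j_0)K \leq (j_1 - j_0 - 2)K$.

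With uniqueness established, if $Y \notin \{X_0, \ldots, X_m\}$ then $\pi_Y(x) = \pi_Y(X_0)$ and $\pi_Y(z) = \pi_Y(X_m)$, so the iterated diameter triangle inequality gives $d_Y(x,z) \leq \sum_{j=0}^{m-1} d_Y(X_j, X_{j+1}) \leq mK \leq d_\cC(x,z)$. If $Y = X_{j_0}$ for the unique index $j_0$, let $a, b$ be the first and last vertices of the geodesic lying in $Y$ (so $a = x$ when $j_0 = 0$, else $a = u_{j_0}$; and analogously for $b$). Then $d_Y(x,z) \leq d_Y(x,a) + \rho_Y(a,b) + d_Y(b,z)$, where the outer terms are bounded by the same telescoping applied to the sub-geodesics avoiding $Y$, giving $d_Y(x,a) \leq j_0 K$ and $d_Y(b,z) \leq (m - j_0)K$, while $\rho_Y(a,b) = \ell_{j_0}$; summing yields $d_Y(x,z) \leq mK + \ell_{j_0} \leq d_\cC(x,z)$. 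The main obstacle is the uniqueness step, which is essentially the assertion that $Y$ embeds isometrically into $\cC_K(\bY)$ and crucially relies on $K > \theta$; without it, a geodesic could re-enter $Y$ and the middle $\rho_Y$-term would require a much more delicate analysis.
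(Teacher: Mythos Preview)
Your argument is correct, and it takes a genuinely different route from the paper's proof. The paper proceeds by a short induction on $d_\cC(x,z)$: peel off one edge at $x$ (either a within-space edge of length~1 or a cross-edge of length~$K$), apply the inductive hypothesis to the remaining geodesic, and use (SP~2) and (SP~4) to control the change in $d_Y$. No structural information about the whole geodesic is needed.

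Your approach instead analyzes the full segment/cross-edge decomposition of a geodesic and proves the intermediate fact that a geodesic visits any given $Y\in\bY$ at most once (equivalently, each $Y$ sits isometrically in $\cC_K(\bY)$). This is the step where $K>\theta$ is used, and your telescoping bound $\rho_Y(v_{j_0},u_{j_1})\le d_Y(X_{j_0+1},X_{j_1-1})\le\max\{\theta,(j_1-j_0-2)K\}$ versus the sub-geodesic length $\ge (j_1-j_0)K$ is clean. Once uniqueness is in hand, your two cases (whether the geodesic meets $Y$ or not) go through exactly as you say, with the telescoping bounds $d_Y(x,a)\le j_0K$ and $d_Y(b,z)\le (m-j_0)K$ valid because none of the intervening $X_j$ equal $Y$. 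The paper's inductive proof is shorter; your proof buys the additional structural statement about geodesics and the isometric embedding of the pieces, which is independently useful (and is implicit in, but not singled out by, the paper).
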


\begin{proof}
If $d_\cC(x,z) \le \theta$ then $X=Z$ so $d_Y(x,z)=d_\cC(x,z)\leq
\theta$ if $Y=X$ and $d_Y(x,z)\leq\theta$ by (SP 4) otherwise.  

In
general, we induct on the the distance because our spaces are graphs. If $d_\cC(x,z) \ge \theta+1 >
\theta$ let $x_0$ be a vertex adjacent to $x$ such that $d_\cC(x,z) =
d_\cC(x,x_0) + d_\cC(x_0,z)$. If $X=X_0$ then $d_\cC(x,x_0) = 1$ so
$d_\cC(x_0,z) \ge d_\cC(x,z) -1 \ge \theta$ since $d_\cC(x,z) \ge
\theta +1$. Furthermore, since $X=X_0$ either $d_Y(x,z) = d_Y(x_0,z)$
if $X\neq Y$ or $d_Y(x,z) \le 1+ d_Y(x_0,z)$ if $X=Y$. In both cases
$d_\cC(x,z) = d_\cC(x_0,z)+1 \ge d_Y(x_0,z)+1 \ge d_Y(x,z)$.

If $X\neq X_0$ then $d_\cC(x,x_0) = K$ and it may be that
$d_\cC(x_0,z) < \theta$. 
However, in this case the vertex adjacent to
$z$ will be in $Z$ and we can apply the previous case,
unless $x_0=z$. But if $x_0=z$, then $d_\cC(x,z)=K$
and $d_Y(x,z) \le \theta$, therefore this is fine too. 

So we can
assume that $d_\cC(x_0,z) \ge \theta$ and $d_Y(x_0,z) \le
d_\cC(x_0,z)$. Since $x$ and $x_0$ are adjacent we have that
$d_Y(x,x_0) \le K$
 and therefore $d_\cC(x,z) = K + d_\cC(x_0,z) \ge
d_Y(x,x_0) + d_Y(x_0,z) \ge d_Y(x,z)$.
\end{proof}

\begin{lemma}\label{complex_divide}
Assume $K> 2\theta$. Given $x,y,z \in \cC_K(\bY)$ with $\bY_K(x,z)
\cup \{X,Z\} = \{X=X_0< X_1<\cdots < X_n =Z\}$ there exists $k \in
\{0,\dots, n\}$ such that if $i< k$ then $\pi_{X_i}(y) = \pi_{X_i}(z)$
and if $i >k+1$ then $\pi_{X_i}(y) = \pi_{X_i}(x)$.
\end{lemma}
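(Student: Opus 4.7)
The plan is to isolate a uniform statement for the middle indices $i\in\{1,\ldots,n-1\}$ and then pick $k$ based on the configuration of $Y$ (the metric space containing $y$) relative to the standard path $X_0<\cdots<X_n$. For such a middle $i$ one has $X_i\neq X,Z$ and $d_{X_i}(X,Z)=d_{X_i}(x,z)>K>\theta$, so (P1') forces $\pi_{X_i}(X)=\pi_{X_i}(Z)$, i.e.\ $\pi_{X_i}(x)=\pi_{X_i}(z)$; the goal becomes to show $\pi_{X_i}(y)$ agrees with this common value, in which case the conditions at middle $i$ hold for any choice of $k$.

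To verify the middle claim I split into two subcases. If $Y=X_j$ for some $j\neq i$, then $X_j$ and $X_i$ are comparable in the extended order of Proposition \ref{order}: either Lemma \ref{conditions}(1) (if $X_j<X_i$) or Lemma \ref{conditions}(4) (if $X_j>X_i$) produces a $d$-inequality exceeding $\theta$ that (P1') converts into $\pi_{X_i}(X_j)\in\{\pi_{X_i}(X),\pi_{X_i}(Z)\}$. If instead $Y\notin\{X_0,\ldots,X_n\}$, the triangle inequality (P4) together with $d_{X_i}(X,Z)>2\theta$ forces at least one of $d_{X_i}(X,Y), d_{X_i}(Y,Z)$ above $\theta$, and (P1') again gives $\pi_{X_i}(Y)\in\{\pi_{X_i}(X),\pi_{X_i}(Z)\}$. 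Either way, $\pi_{X_i}(y)=\pi_{X_i}(x)=\pi_{X_i}(z)$.

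With middle indices handled, I choose $k$ based on the endpoints. When $Y=X_j$, take $k=j$: for $i=0<j$, applying (P1') to $d_{X_j}(X,Z)>\theta$ yields $\pi_X(y)=\pi_X(X_j)=\pi_X(Z)$, and the symmetric computation handles $i=n$ when $j<n$; the boundary cases $j\in\{0,n\}$ degenerate trivially, since then $\pi_X(y)=\pi_X(Z)$ or $\pi_Z(y)=\pi_Z(X)$ holds tautologically. When $Y\notin\{X_0,\ldots,X_n\}$ and $n\geq 2$, I examine $i=1$: if $d_{X_1}(X,Y)>\theta$, then (P1') gives $\pi_X(X_1)=\pi_X(Y)$, which combined with $\pi_X(X_1)=\pi_X(Z)$ yields $\pi_X(y)=\pi_X(z)$ and $k=n$ works; if instead $d_{X_1}(Y,Z)>\theta$, the symmetric chain $\pi_Z(Y)=\pi_Z(X_1)=\pi_Z(X)$ gives $\pi_Z(y)=\pi_Z(x)$ and $k=0$ works. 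The case $n\leq 1$ is trivial with $k=0$.

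The main obstacle is keeping the case analysis organized: every subcase of the chosen $k$ needs to be double-checked against the degenerate configurations $Y\in\{X,Z\}$ or $n$ small, but once the middle-index uniform statement is in hand, all remaining work is a short chain of (P1') applications. Notably, unlike in the proof of Lemma \ref{divide}, I do not need to propagate sub-case B from $X_1$ all the way to $X_{n-1}$: the equality $\pi_Z(X_1)=\pi_Z(X)$ (from $d_{X_1}(X,Z)>\theta$) already lets me chain directly to $\pi_Z(Y)=\pi_Z(X)$ using only the trigger $d_{X_1}(Y,Z)>\theta$.
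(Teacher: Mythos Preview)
Your proof rests on a misreading of (P1'). The axiom says: if $d_Y(X,Z)>\theta$ then $\pi_X(Y)=\pi_X(Z)$ (and by symmetry $\pi_Z(Y)=\pi_Z(X)$). It gives information about projections \emph{onto} $X$ and $Z$, not onto $Y$. You repeatedly use it as though $d_{X_i}(A,B)>\theta$ implied $\pi_{X_i}(A)=\pi_{X_i}(B)$, which is false.

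In particular, your ``middle claim'' that $\pi_{X_i}(x)=\pi_{X_i}(z)$ for $1\le i\le n-1$ cannot hold: by definition $d_{X_i}(x,z)=\diam\bigl(\pi_{X_i}(x)\cup\pi_{X_i}(z)\bigr)>K$, while each of $\pi_{X_i}(x),\pi_{X_i}(z)$ has diameter $\le\theta$ by (SP~4), so these two sets are certainly not equal. The whole content of the lemma is precisely to locate where along the chain the projection of $y$ switches from agreeing with $\pi_{X_i}(z)$ to agreeing with $\pi_{X_i}(x)$; your strategy erases this dichotomy. The same misapplication recurs in your case $Y\notin\{X_0,\dots,X_n\}$: from $d_{X_i}(X,Y)>\theta$ you conclude $\pi_{X_i}(Y)=\pi_{X_i}(X)$, but (P1') only yields $\pi_X(X_i)=\pi_X(Y)$ and $\pi_Y(X_i)=\pi_Y(X)$. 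Consequently your choice of $k\in\{0,n\}$ based solely on the behavior at $i=1$ does not control the remaining middle indices.

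The paper's argument is genuinely different: it defines $k$ as the least index with $\pi_{X_k}(y)\ne\pi_{X_k}(z)$, deduces $d_{X_{k+1}}(X_k,y)\le\theta$ (else (P1') would force equality at $k$), and then uses the triangle inequality together with $d_{X_{k+1}}(X_k,X_i)=d_{X_{k+1}}(X,Z)>2\theta$ for $i>k+1$ to get $d_{X_{k+1}}(y,X_i)>\theta$; now (P1'), applied correctly, gives $\pi_{X_i}(y)=\pi_{X_i}(X_{k+1})=\pi_{X_i}(x)$.
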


\begin{proof}
Let $k$ be the smallest value such that $\pi_{X_k}(y) \ne
\pi_{X_k}(z)$. If there is no such $k$, or $k\in\{n,n-1\}$, then we
are done by setting $k=n-1$, so we now assume that $k$ exists and $k<n-1$. 

First we
observe that $d_{X_{k+1}}(X_k, y) \leq \theta$ for otherwise
$\pi_{X_k}(y) =\pi_{X_k}(X_{k+1}) = \pi_{X_k}(z)$ by (P1'). But then, by the
triangle inequality, we have $d_{X_{k+1}}(y,X_{i})  \ge
d_{X_{k+1}}(X_i,X_k) - \theta \ge \theta$ for all $i > k+1$. 
Therefore, by (P1'),  $\pi_{X_i}(y) =
\pi_{X_i}(X_{k+1}) = \pi_{X_i}(x)$.
\end{proof}

\begin{thm}\label{thm:complex_dist}
If $K \ge 4\theta$ then for all $x,z\in\cC_K(\bY)$ we have
$$\frac14 \sum_{Y\in\bY_K(x,z)} d_Y(x,z) \le d_\cC(x,z) \le 2\sum_{Y\in\bY_K(x,z)} d_Y(x,z) + 3K.$$
\end{thm}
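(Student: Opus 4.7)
The plan is to prove the upper and lower bounds separately.

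\textbf{Upper bound.} I follow the standard-path construction of Lemma \ref{standard paths}. Writing $\bY_K(x,z) \cup \{X, Z\}$ in increasing order as $X = X_0 < X_1 < \cdots < X_n < X_{n+1} = Z$, I build an explicit path in $\cC_K(\bY)$ that travels inside $X$ from $x$ to $\pi_X(X_1)$, crosses the length-$K$ jump edge to $\pi_{X_1}(X)$, traverses $X_1$ from $\pi_{X_1}(X)$ to $\pi_{X_1}(X_2)$, and so on, ending in $Z$ at $z$. By Proposition \ref{order} the traversal inside each $X_i$ (for $1 \le i \le n$) has length $d_{X_i}(X_{i-1},X_{i+1}) = d_{X_i}(x,z)$, and (P1') applied to $d_{X_1}(X,Z) > \theta$ identifies $\pi_X(X_1) = \pi_X(z)$, so the initial traversal has length $d_X(x,z)$; similarly for the final traversal in $Z$. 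Distinguishing whether $X, Z$ themselves lie in $\bY_K(x,z)$ (in which case $d_X(x,z)$ or $d_Z(x,z)$ are already counted in $\sum d_Y$, otherwise they are bounded by $K$), and using $nK \le \sum_{Y \in \bY_K(x,z)} d_Y(x,z)$ since each summand exceeds $K$, yields the upper bound $2\sum_Y d_Y(x,z) + 3K$.

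\textbf{Lower bound.} Let $\gamma = (y_0 = x, y_1, \ldots, y_L = z)$ be a geodesic of length $L = d_\cC(x,z)$. For each $Y \in \bY_K(x,z)$, the function $f_Y(t) = \rho_Y(\pi_Y(x), \pi_Y(y_t))$ runs from $0$ to $d_Y(x,z)$, so its total variation along $\gamma$ is at least $d_Y(x,z)$. Summing over $Y$ and exchanging the order of summation gives
\[
\sum_{Y \in \bY_K(x,z)} d_Y(x,z) \;\le\; \sum_{t=1}^{L} \sum_{Y \in \bY_K(x,z)} d_Y(y_{t-1}, y_t),
\]
and it suffices to bound the inner sum by $4 \cdot d_\cC(y_{t-1}, y_t)$ at each step. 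Internal steps within a space $W$ activate only the $Y = W$ term, contributing at most $1$. For a jump edge $W_1 \to W_2$ (length $K$), I apply Lemma \ref{complex_divide} to $(x, y_{t-1}, z)$ and $(x, y_t, z)$ to obtain thresholds $k_{t-1}, k_t$: outside the window $[\min(k_{t-1},k_t), \max(k_{t-1},k_t) + 1]$, the projections $\pi_{X_i}(y_{t-1})$ and $\pi_{X_i}(y_t)$ coincide. The $\cP_K$-adjacency of $W_1, W_2$, i.e.\ $\bY_K(W_1,W_2) = \emptyset$, forces $|k_t - k_{t-1}| \le 2$: any chain element $X_i$ strictly interior to the window would satisfy $\pi_{X_i}(y_{t-1}) = \pi_{X_i}(x)$ and $\pi_{X_i}(y_t) = \pi_{X_i}(z)$, giving $d_{X_i}(W_1,W_2) = d_{X_i}(y_{t-1}, y_t) = d_{X_i}(x,z) > K$, contradicting emptiness of $\bY_K(W_1,W_2)$. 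Thus the window has at most $4$ indices, each contributing at most $d_{X_i}(W_1, W_2) \le K$, so the jump contribution is at most $4K$. Summing over all steps, separating internal from jump contributions, gives $\sum_t \sum_Y d_Y(y_{t-1}, y_t) \le 4L$, yielding $L \ge \tfrac14 \sum_Y d_Y(x,z)$.

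\textbf{Main obstacle.} The technical heart of the proof is the window bound $|k_t - k_{t-1}| \le 2$ for jump steps, which converts the structural $\cP_K$-adjacency condition $\bY_K(W_1,W_2) = \emptyset$ into the quantitative control needed for the step-wise sum. The contradiction argument must be verified uniformly across all configurations (both $W_1, W_2$ in $\bY_K(x,z)$, only one, or neither), using the fact that in each case the interior chain indices are distinct from $W_1, W_2$; this requires careful bookkeeping with Lemma \ref{complex_divide} and with the position of $W_1, W_2$ relative to the chain. Once this is done, the rest of the argument is routine combination of triangle inequality and the Lipschitz property of projections.
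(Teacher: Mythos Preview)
Your upper bound is essentially the paper's argument. The lower bound, however, has a genuine gap. You write the inequality
\[
\sum_{Y\in\bY_K(x,z)} d_Y(x,z) \;\le\; \sum_{t=1}^{L} \sum_{Y\in\bY_K(x,z)} d_Y(y_{t-1}, y_t)
\]
and then claim that for an internal step inside a space $W$ only the $Y=W$ term is ``activated''. But $d_Y(y_{t-1},y_t) = \diam\big(\pi_Y(y_{t-1})\cup\pi_Y(y_t)\big) = \diam \pi_Y(W)$ when $Y\neq W$, and this is only bounded by $\theta$, not by $0$. Hence the inner sum at an internal step is of order $\theta\,|\bY_K(x,z)|$, not $1$, and the step-wise bound $\sum_Y d_Y(y_{t-1},y_t)\le 4\,d_\cC(y_{t-1},y_t)$ fails. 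The same problem occurs at jump edges for the indices outside your window: their projections coincide as \emph{sets}, but the diameter of that common set is still up to~$\theta$. Your introduction of $f_Y(t)=\rho_Y(\pi_Y(x),\pi_Y(y_t))$ suggests you really intend a variation argument, in which coinciding projection sets do contribute zero; but then the displayed triangle-inequality estimate must be replaced by a variation estimate, $f_Y$ must be made well-defined (it is a distance between sets, not points), and you must absorb the initial offset $f_Y(0)\le\theta$. Doing this produces an extra additive $\theta\,|\bY_K(x,z)|$, and after absorbing it via $d_Y(x,z)>K\ge 4\theta$ you get a constant strictly worse than~$\tfrac14$.

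The paper's lower bound is organized differently and avoids this loss entirely. Instead of summing over all $Y$ at every step, it selects, for each $j$, a marker index $i_j$ along the geodesic (the first time $\pi_{X_{j-1}}(y_i)=\pi_{X_{j-1}}(z)$), and uses Lemma~\ref{complex_divide} to show $\pi_{X_{j+3}}(y_{i_j})=\pi_{X_{j+3}}(x)$. Thus on the subsegment from $y_{i_{j-3}}$ to $y_{i_{j+1}}$ the projection to $X_j$ changes from $\pi_{X_j}(x)$ to $\pi_{X_j}(z)$, and a single application of Lemma~\ref{complex_lipschitz} gives $d_\cC(y_{i_{j-3}},y_{i_{j+1}})\ge d_{X_j}(x,z)$. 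Summing over $j$ in each residue class modulo $4$ and adding the four inequalities yields $4\,d_\cC(x,z)\ge\sum_j d_{X_j}(x,z)$ with no $\theta$--error.
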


\begin{proof}
Let $\bY_K(x,z) \cup \{X,Z\} = \{X=X_0< X_1< \cdots< X_n =Z\}$. We first prove the upper bound by finding a path from $x$ to $z$. Fix points $x_i \in \pi_{X_i}(x) = \pi_{X_i}(X_{i-1})$ and $z_i \in \pi_{X_i}(z) = \pi_{X_i}(X_{i+1})$. Note that $x=x_0$ and $z=z_n$. We then have $d_\cC(x_i, z_i) \le d_{X_i}(x_i,z_i) \le d_{X_i}(x,z)$. Since $d_{\cP_K(\bY)}(X_i,X_{i+1}) = 1$ we also have that $d_\cC(z_i,x_{i+1}) = K$. Therefore
$$d_\cC(x,z) \le  \sum_i d_{X_i}(x,z) + nK.$$
If $X_0$ (or $X_n$) are not in $\bY_K(x,z)$ then $d_{X_0}(x,z) < K$ (or $d_{X_n}(x,z) \le K$). On the other hand for $X_i \in \bY_K(x,z)$ we have $d_{X_i}(x,z) + K \le 2d_{X_i}(x,z)$. The upper bound follows.

The lower bound is more involved. Let $x=y_0,y_1, \dots, y_k=z$ be a geodesic. Fix $i_1, \cdots, i_{n-1}$ such that $\pi_{X_{j-1}}(y_{i_j}) = \pi_{X_{j-1}}(z)$ but $\pi_{X_{j-1}}(y_{i}) \neq \pi_{X_{j-1}}(z)$ if $i< i_j$. Note that if $\pi_{X_j}(y_i) = \pi_{X_j}(z)$ then $d_{X_j}(X_{j-1}, y_i) \ge \theta$ so $\pi_{X_{j-1}}(y_i) = \pi_{X_{j-1}}(X_j) = \pi_{X_{j-1}}(z)$. Therefore $i_j \le i_{j+1}$. (However, it is possible that $i_j = i_{j+1}$.)

Next we show that $\pi_{X_{j+3}}(y_{i_j}) = \pi_{X_{j+3}}(x)$. For this we observe that $\pi_{X_{j-1}}(y_{i_j -1}) \neq \pi_{X_{j-1}}(z)$ so by Lemma \ref{complex_divide} we have that $\pi_{X_{j+1}}(y_{i_j-1}) = \pi_{X_{j+1}}(x)$. This implies that $\pi_{X_{j+1}}(y_{i_j}) \neq \pi_{X_{j+1}}(z)$ for if it did we would have $d_{X_{j+1}}(y_{i_j-1}, y_{i_j}) =d_{X_{j+1}}(x, z) \ge K$ contradicting that $y_{i_j-1}$ and $y_{i_j}$ are consecutive vertices in a geodesic. Another application of Lemma \ref{complex_divide} implies that $\pi_{X_{j+3}}(y_{i_j}) = \pi_{X_{j+3}}(x)$.

By Lemma \ref{complex_lipschitz}
$$d_\cC(y_{i_{j-3}}, y_{i_{j+1}}) \ge d_{X_j}(y_{i_{j-3}}, y_{i_{j+1}}) = d_{X_j}(x,z)$$
and therefore
$$d_\cC(x,z) \ge \sum_id_{X_{j+4i}}(x,z)$$
where $j=0,1,2$ or $3$. Summing over $j$ gives the lower bound.
\end{proof}

\subsection{Acylindricity}
\begin{thm}\label{thm:complex_acylind}
Let $K\geq 4\theta$. Assume that for each $Y\in\bY$ the stabilizer of $Y$ acts acylindrically on $Y$ with uniform constants independent of $Y$. Furthermore assume that for some fixed $N$ and $B$, for any $N$ distinct elements of $\bY$ the common stabilizer is a finite subgroup of size at most $B$. Then the action of $G$ on $\cC_K(\bY)$ is acylindrical.
\end{thm}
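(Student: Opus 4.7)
The plan is to combine a dichotomy on $|\bY_K(X,Z)|$ with the Lipschitz bound of Lemma \ref{complex_lipschitz}, reducing one case to the projection-complex argument of Theorem \ref{acylindrical} and the other to the per-$Y$ acylindricity hypothesis. Fix $D>0$ and let $\mathcal{S}=\{g\in G:\ d_\cC(x,gx)\leq D,\ d_\cC(z,gz)\leq D\}$. Lemma \ref{complex_lipschitz} gives, for every $Y\in\bY$ and every $g\in\mathcal{S}$, the bound $d_Y(x,gx),d_Y(z,gz)\leq D_1:=\max(D,\theta)$; this in turn yields some constant $D_2$ depending only on $D$ and $K$ such that $d(X,gX),d(Z,gZ)\leq D_2$ in $\cP_K(\bY)$ for every $g\in\mathcal{S}$.

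In the first case, when $|\bY_K(X,Z)|\geq N+4D_2+6$, I will mimic the proof of Theorem \ref{acylindrical}: two applications of Lemma \ref{divide}, to the triples $(X,gX,Z)$ and $(gX,Z,gZ)$, produce a middle subsegment $I\subseteq\bY_K(X,Z)\cap\bY_K(gX,gZ)$ of length at least $N$. For each $g\in\mathcal{S}$ the order-preserving bijection $\bY_K(X,Z)\to\bY_K(gX,gZ)$ induced by $g$ restricts on $I$ to a translation by one of at most $2D_2+1$ integers, and pigeon-holing together with the common-stabilizer hypothesis bounds $|\mathcal{S}|$ by $B(2D_2+1)$.

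In the second case, when $|\bY_K(X,Z)|<N+4D_2+6$, I will exploit Theorem \ref{thm:complex_dist}: the lower bound $\sum_{W\in\bY_K(x,z)}d_W(x,z)\geq (L-3K)/2$ is distributed over at most $N+4D_2+8$ terms, so taking $L$ sufficiently large forces some $Y^*\in\bY_K(x,z)$ with $d_{Y^*}(x,z)$ exceeding any preassigned threshold. The Lipschitz bound then gives $d_{gY^*}(X,Z)=d_{gY^*}(x,z)\geq d_{Y^*}(x,z)-2D_1>K$, so $gY^*\in\bY_K(X,Z)\cup\{X,Z\}$, and hence $gY^*$ has a well-defined position $k_g$ in the total order on $\bY_K(X,Z)\cup\{X,Z\}$, taking only $O(|\bY_K(X,Z)|)$ values.

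The decisive observation closing the second case is that $k_{g_1}=k_{g_2}$ forces $g_1Y^*=g_2Y^*$, so $g_2^{-1}g_1\in\mathrm{Stab}(Y^*)$, and it moves any chosen representative of $\pi_{Y^*}(x)$ (resp.\ $\pi_{Y^*}(z)$) by at most $2D_1+\theta$ in $\rho_{Y^*}$, while these two sets are at $\rho_{Y^*}$-distance at least $d_{Y^*}(x,z)-2\theta$, which I have arranged to be as large as needed. The uniform acylindricity of $\mathrm{Stab}(Y^*)$ acting on $Y^*$ then bounds the number of such ratios by a universal constant $B_0$, and summing over the possible values of $k_g$ gives a bound depending only on $D$. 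The main obstacle I anticipate is the bookkeeping to calibrate the numerical thresholds so that the two cases are exhaustive uniformly in $X,Z$, and to handle the degenerate subcases where $Y^*\in\{X,Z\}$ or $gY^*\in\{X,Z\}$.
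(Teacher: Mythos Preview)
Your plan is correct and follows essentially the same two-case strategy as the paper. The only cosmetic differences are that the paper invokes Theorem~\ref{acylindrical} as a black box via the $1$-Lipschitz projection $\cC_K(\bY)\to\cP_K(\bY)$ rather than rerunning its proof, and in the second case it indexes by the position of the fixed $Y$ in the varying set $\bY_K(gx,gz)$ rather than by the position of $gY^*$ in the fixed set $\bY_K(X,Z)\cup\{X,Z\}$; these two bookkeeping schemes are equivalent up to applying $g^{-1}$.
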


\begin{proof}
Fix $D\ge \theta>0$. By assuming that $D\ge\theta$, by Lemma \ref{complex_lipschitz} we have that if $d_\cC(x,x') \le D$ then $d_Y(x,x') \le D$ for all $Y\in \bY$. We will use this repeatedly throughout the proof.

By Theorem \ref{acylindrical}, $G$ acts on $\cP_K(\bY)$ acylindrically so there exists $L_\cP>0$ and $B_\cP>0$ such that if $d_\cP(X,Z)>L_\cP$ then there at most $B_\cP$ elements $g \in G$ such that $d_\cP(X, gX), d_\cP(Z, gZ) < D$. By our assumption, there exist $L_\bY$ and $B_\bY$ such that for every $Y \in \bY$ and $x,z \in Y$ with $d_Y(x,z) \ge L_\bY$ there are at most $B_\bY$ elements $g\in G$ in the stabilizer of $Y$ such that $d_Y(x,gx), d_Y(z,gz) \le 2D$. It will be convenient to assume that $L_\bY \ge K$.

Fix $X,Z \in \bY$ and $x \in X$ and $z \in Z$. Note that it is possible that $X=Z$. Let $\cA = \{g \in G| d_\cC(x, gx) \le D \mbox{ and } d_\cC(z, gz) \le D\}$.
Using the distance formulas, Corollary \ref{distance} and Theorem \ref{thm:complex_dist}, we have that there exists an $L_\cC$ such that if $d_\cP(x,z) \ge L_\cC$ then either:
\begin{enumerate}
\item $d_\cP(X,Z) \ge L_\cP$ or

\item there exists a $Y \in \bY_{L_\bY+2D+4\theta}(x,z)$ and $|\bY_K(x,z)| \le 2L_\cP$.
\end{enumerate}
Since the natural projection $\cC_K(\bY)\to\cP_K(\bY)$ is 1-Lipschitz if $g \in \cA$ then $d_\cP(X, gX) \le D$ and $d_\cP(Z, gZ) \le D$. Therefore if (1) holds there at most $B_\cP$ elements in $\cA$.

Now assume (2) holds. For any $g \in G$ we have that $d_Y(x,y) \le d_Y(x, gx) + d_Y(gx,gz) + d_Y(z, gz)$. Therefore if $g \in \cA$ we have $d_Y(gx,gz)  \ge L_\bY +4\theta$. In particular, $Y \in \bY_{L_\bY + 4\theta}(gx,gz) \subset \bY_K(gx,gz)$. Now let $\cA_i$ be the set of all $g \in \cA$ such that $gY$ is the $i$th element of $\bY_K(gx,gz)$. Since $\bY_K(x,y)$ contains at most $2L_\cP$ elements, if $i > 2L_\cP$ then $\cA_i$ is empty. We will see that each $\cA_i$ has at most $B_\bY$ elements and therefore $\cA$ has at most $2L_\cP B_\bY$ elements.

Fix $g \in \cA_i$ and pick an $x' \in \pi_Y(gx)$ and $z'\in \pi_Y(gz)$. Let $$\cB = \{h \in G| h(Y) = Y, d_Y(x',hx') \le 2D \mbox{ and } d_Y(z', hz')\le 2D\}.$$ 
Then by our assumption, there is a constant $B_\bY$ that does 
not depend on $Y$ such that 
$|\cB| \le B_\bY$.

If $g' \in \cA_i \subset \cA$ then $d_Y(gx, g'x) \le d_Y(x,gx)+d_Y(x,g'x) \le 2D$. We also have $d_Y(x', g'g^{-1}x') \le d_Y(gx, g'x) \le 2D$ since $x' \in \pi_Y(gx)$ and $g'g^{-1}x' \in g'g^{-1}(\pi_Y(gx)) = \pi_{g'g^{-1}Y}(g'x)=\pi_Y(g'Y)$. Similarly $d_Y(z', g'g^{-1}z') \le 2D$ and therefore $g'g^{-1} \in \cB$. This gives the desired bound on the size of $\cA_i$.
\end{proof}

\subsection{Tree-gradedness}

In this section we study the geometry of $\cC_K(\bY)$. In particular, we prove that it is a quasi-tree (resp. hyperbolic) when the elements of $\bY$ are uniform quasi-trees (resp. uniformly hyperbolic), provided that $K$ is large.

\begin{lemma}\label{lem:far_from_proj}
 Assume $K\geq 4\theta$. 
 \\
 (1)
 Let $x,z\in \cC_K(\bY)$ be vertices connected by an edge, and let $Y\in\bY$ be so that $d_{\cC}(x,\pi_Y(x))>3K$. Then $\pi_Y(z)=\pi_Y(x)$.
 \\
 (2)
 Let $x,z\in\cC_K(\bY)$ be vertices with  $d_Y(x,z)> \theta$
 for $Y \in \bY$, then 
 any path from $x$ to $z$ intersects the (closed) neighborhood of radius $3K$
 around $\pi_Y(x)$ (and also $\pi_Y(z)$).
\end{lemma}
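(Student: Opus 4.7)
The plan is to prove (1) first by a careful case analysis on the edge type, then derive (2) by iterating (1) along a hypothetical path avoiding the neighborhood.

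\textbf{Part (1).} First observe that the hypothesis $d_\cC(x,\pi_Y(x))>3K$ forces $X\neq Y$: otherwise $\pi_Y(x)=\{x\}$ and the distance would be $0$. Now split on the edge type. If $X=Z$, then $\pi_Y(x)=\pi_Y(X)=\pi_Y(Z)=\pi_Y(z)$, and we are done. Otherwise $x,z$ are joined by an edge of length $K$, with $x\in\pi_X(Z)\subset X$ and $z\in\pi_Z(X)\subset Z$. I would then rule out $Z=Y$, because in that subcase $z\in Z=Y$ would lie in $\pi_Y(X)=\pi_Y(x)$, contradicting $d_\cC(x,\pi_Y(x))>3K\geq K$. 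So $Z\neq Y$ and $\pi_Y(z)=\pi_Y(Z)$, and what I must prove reduces to $\pi_Y(X)=\pi_Y(Z)$.

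The key estimate is the obvious monotonicity bound $d_\cC(x,\pi_Y(x))\leq d_X(x,\pi_X(Y))+K$ (since one can travel from $x$ to any point of $\pi_X(Y)$ inside $X$, then cross one length-$K$ edge into $\pi_Y(X)$). This gives $d_X(x,\pi_X(Y))>2K\geq\theta$ (using $K\geq 4\theta$). Since $x\in\pi_X(Z)$, the definition of $d_X(Y,Z)$ as a diameter yields $d_X(Y,Z)\geq d_X(x,\pi_X(Y))>\theta$. Applying (P1') with the labels $X$ and $Y$ swapped then gives $\pi_Y(X)=\pi_Y(Z)$, finishing (1).

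\textbf{Part (2).} By symmetry in $x$ and $z$, it suffices to prove the assertion about $\pi_Y(x)$. I will argue by contradiction. Suppose a path $x=x_0,x_1,\dots,x_n=z$ avoids the closed $3K$-neighborhood of $\pi_Y(x)$, so $d_\cC(x_i,\pi_Y(x))>3K$ for every vertex $x_i$. I claim $\pi_Y(x_i)=\pi_Y(x)$ for all $i$, by induction. For $i=0$ this is trivial; assuming it for $i$, the combination $\pi_Y(x_i)=\pi_Y(x)$ and $d_\cC(x_i,\pi_Y(x))>3K$ yields $d_\cC(x_i,\pi_Y(x_i))>3K$, so (1) applied to the adjacent pair $(x_i,x_{i+1})$ gives $\pi_Y(x_{i+1})=\pi_Y(x_i)=\pi_Y(x)$. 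Consequently $\pi_Y(z)=\pi_Y(x)$, which means
\[
d_Y(x,z)=\mathrm{diam}\bigl(\pi_Y(x)\cup\pi_Y(z)\bigr)=\mathrm{diam}(\pi_Y(X))=d_Y(X,X)\leq\theta
\]
by (SP 4), contradicting $d_Y(x,z)>\theta$.

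The main obstacle is getting the exact equality $\pi_Y(X)=\pi_Y(Z)$ in part (1); the trick is that the seemingly weak metric hypothesis $d_\cC(x,\pi_Y(x))>3K$ transfers, via the trivial cross-edge bound, into the sharp projection inequality $d_X(Y,Z)>\theta$ that (P1') requires. Once (1) is established, (2) is essentially automatic by induction.
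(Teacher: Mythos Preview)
Your Part~(2) is correct and matches the paper's approach: once (1) is in hand, one iterates along the path to see that $\pi_Y$ cannot change, contradicting $d_Y(x,z)>\theta$.

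Part~(1), however, has a genuine gap in the cross-edge case. Your ``obvious monotonicity bound''
\[
d_\cC\bigl(x,\pi_Y(x)\bigr)\leq d_X\bigl(x,\pi_X(Y)\bigr)+K
\]
is justified by crossing a single length-$K$ edge from $\pi_X(Y)$ into $\pi_Y(X)$. But such an edge exists in $\cC_K(\bY)$ only when $X$ and $Y$ are adjacent in $\cP_K(\bY)$, i.e.\ when $\bY_K(X,Y)=\emptyset$. Nothing in the hypotheses forces this. Indeed, if some $W\neq X,Y,Z$ has $d_W(X,Y)$ very large, then $X$ and $Y$ are far apart in $\cP_K(\bY)$; one can still have $X,Z$ adjacent with $d_W(X,Z)>\theta$, whence (P1$'$) gives $\pi_X(Z)=\pi_X(W)=\pi_X(Y)$, so $x\in\pi_X(Y)$ and your right-hand side is at most $\theta+K$. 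Meanwhile $d_\cC(x,\pi_Y(X))$ is of order $d_W(X,Y)$, which can be arbitrarily large. Thus the bound is false in general, and with it your deduction of $d_X(Y,Z)>\theta$ collapses.

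The paper takes a different route that sidesteps this issue: from $d_\cC(x,p)>3K$ (with $p\in\pi_Y(x)$) the distance formula of Theorem~\ref{thm:complex_dist} produces some $W\in\bY_K(x,p)$, necessarily $W\neq Y$; then Lemma~\ref{complex_lipschitz} gives $W\in\bY_{3\theta}(z,p)$, and two applications of (P1$'$) (or the trivial case $x\in W$ or $z\in W$) yield $\pi_Y(x)=\pi_Y(W)=\pi_Y(z)$. The point is that the intermediate $W$ serves as a witness regardless of whether $X$ and $Y$ are adjacent. Your more elementary strategy can likely be salvaged by treating the non-adjacent case separately (for instance via the least element of $\bY_K(X,Y)$), but as written the key inequality is unsupported.
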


\begin{proof}
(1) 
 By Theorem \ref{thm:complex_dist},  there exists $W\in Y_K(x,\pi_Y(x))$ (and necessarily $W\neq Y$). By Lemma \ref{complex_lipschitz} we have $W\in Y_{3\theta}(z,\pi_Y(x))$. Hence we have $\pi_Y(x)=\pi_Y(W)=\pi_Y(z)$, where each equality holds either because $x$ and/or $z$ lie in $W$ or because of (P1').  
 
 (2) By (P0'), we have $\pi_Y(x)\not=\pi_Y(z)$ therefore any path from $x$ to $z$ intersects the (closed) neighborhood of radius $3K$ by (1).
\end{proof}

Let $x,z\in\cC_K(\bY)$. Let 
$\{X=X_0 < X_1 < \cdots <X_{n}=Z\}$ 
be the standard path in $\cP_K(\bY)$ from Lemma \ref{standard paths},
as in the proof of Theorem \ref{thm:complex_dist}. We also use
the notation $x_i, z_i$ from there. 
A {\it standard path} from $x$ to $z$ is a path 
joining $x=x_0, z_0, x_1, z_0, \cdots, x_n, z_n=z$ in this order
such that between $x_i,z_i$ it is a geodesic in $\cC(Y_i)$
and that between $z_i, x_{i+1}$ it is an edge. 

The following theorem also follows from Theorem \ref{thm:tree_graded}
below, but we give a separate proof since Theorem
\ref{thm:tree_graded} uses results from the literature. Recall that we
discussed (a variation of) the bottleneck property right before
Theorem \ref{quasi-tree}.

\begin{thm}\label{thm.CY.quasi.tree}
Let $K \ge 4\theta$.
Suppose every $Y$ satisfies the bottleneck property 
with uniform constant $D$. Then $\cC_K(\bY)$ is a quasi-tree. 
\end{thm}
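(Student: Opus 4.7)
The plan is to verify Manning's bottleneck criterion (as recalled before Theorem \ref{quasi-tree}) by showing that, for any two vertices $x,z\in\cC_K(\bY)$, a standard path $p$ from $x$ to $z$ with vertices $x=x_0,z_0,x_1,z_1,\ldots,x_n,z_n=z$ lies in a uniform neighborhood of every other path $\gamma$ from $x$ to $z$. I will handle separately the ``junction vertices'' $x_i,z_i$ (and the length-$K$ edges between them), and the interior points of the $X_i$-geodesic segments $[x_i,z_i]$.

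For the junction vertices, for each $X_i\in\bY_K(x,z)$ one has $d_{X_i}(x,z)>K>\theta$, so Lemma \ref{lem:far_from_proj}(2) forces $\gamma$ to enter the $3K$-neighborhoods of $\pi_{X_i}(x)$ and $\pi_{X_i}(z)$. Since these projection sets have $\rho_{X_i}$-diameter at most $\theta$ by (P0') and contain $x_i$ and $z_i$ respectively (using (P1') and the order on $\bY_K(x,z)$ to identify $\pi_{X_i}(X_{i-1})=\pi_{X_i}(x)$ and $\pi_{X_i}(X_{i+1})=\pi_{X_i}(z)$), $\gamma$ comes within $3K+\theta$ of each $x_i$ and $z_i$. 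The length-$K$ edges of $p$ are then within a uniformly bounded neighborhood of $\gamma$, while the short initial and terminal segments when $X$ or $Z$ fails to lie in $\bY_K(x,z)$ are handled by their bounded length.

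For a point $w$ on an interior geodesic segment $[x_i,z_i]\subset X_i$, the strategy is to extract from $\gamma$ a ``shadow path'' in $X_i$ from $x_i$ to $z_i$ and then invoke the bottleneck property of $X_i$. Picking times $t<s$ at which $\gamma$ approaches $x_i$ and $z_i$ within $3K+\theta$, I prepend and append geodesic arcs of length $\le 3K+\theta$ to $\gamma|_{[t,s]}$, producing a path $\tilde\gamma$ in $\cC_K(\bY)$ from $x_i$ to $z_i$. By Lemma \ref{complex_lipschitz}, any two adjacent vertices $v,v'$ on $\tilde\gamma$ satisfy $d_{X_i}(v,v')\le K$, so choosing a representative $\phi(v)\in\pi_{X_i}(v)$ (depending only on the projection set) and joining successive choices by $X_i$-geodesics yields a genuine path $q$ in $X_i$ from $x_i$ to $z_i$. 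The bottleneck property of $X_i$ then locates a vertex $v$ on $\tilde\gamma$ with $d_{X_i}(\phi(v),w)\le D+K$.

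The main obstacle is converting this $d_{X_i}$-bound into a $d_\cC$-bound, since if $v\notin X_i$ the shadow $\phi(v)$ can be far from $v$ in the complex. The key tool is Lemma \ref{lem:far_from_proj}(1), which, iterated along $\tilde\gamma$, implies that along any maximal subinterval of $\tilde\gamma$ whose vertices all satisfy $d_\cC(\cdot,\pi_{X_i}(\cdot))>3K$, the map $\pi_{X_i}(\cdot)$ is constant. Replacing $v$ by a boundary vertex $v^*$ of the maximal constant-projection subinterval containing it, Lemma \ref{lem:far_from_proj}(1) forces $d_\cC(v^*,\pi_{X_i}(v^*))\le 3K$, hence $d_\cC(v^*,\phi(v^*))\le 3K+\theta$ (using that the projection set has diameter $\le\theta$). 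Combining with $d_\cC(\phi(v^*),w)\le d_{X_i}(\phi(v^*),w)\le D+K$ and accounting for the length-$\le 3K+\theta$ extensions of $\tilde\gamma$ at each end, some point of $\gamma$ lies within $D+7K+2\theta$ of $w$, verifying Manning's bottleneck condition with constant $D'=D+7K+2\theta$ and finishing the proof.
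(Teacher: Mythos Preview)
Your proof is correct and follows essentially the same strategy as the paper: verify the bottleneck criterion for standard paths by projecting an arbitrary path $\gamma$ to each $X_i$, invoke the uniform bottleneck property inside $X_i$, and then use Lemma~\ref{lem:far_from_proj} to pass from closeness in $X_i$ back to closeness in $\cC_K(\bY)$. The only presentational differences are that the paper does not prepend/append geodesics to form $\tilde\gamma$ (it works directly with a subpath of $\gamma$ between the $4K$-balls around $x_i$ and $z_i$), and in the ``shadow-to-original'' step the paper applies Lemma~\ref{lem:far_from_proj}(2) to the pair $w_1,w_j$ (or $w_j,w_m$) rather than your equivalent argument via Lemma~\ref{lem:far_from_proj}(1) and maximal constant-projection subintervals; these are minor reorganizations of the same idea.
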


\begin{proof}
First, we note that one may assume that each $Y$ satisfies
the (variant of the) bottleneck property for a geodesic between any given two points
with respect to a uniform constant which is maybe larger than $D$, 
but we keep using $D$.

Let $x,z \in\cC_K(\bY)$ be any points. We will check the variant of the bottleneck property
 for a standard path between $x,z$. 
 Let $[x_i,z_i]$ be the part (a geodesic) of the standard path in $X_i$.
Let $\gamma$ be any path from $x$ to $z$.
We want to show $[x_i,z_i]$ is contained in a Hausdorff neighborhood of $\gamma$.

For simplicity assume $X_i \not= X_0, X_n$.
By Lemma \ref{lem:far_from_proj} (2) applied to $x,z,X_i$, the path $\gamma$ intersects each 
$4K$-ball around $x_i,z_i$ (we added a $K$ to account for the diameter of $\pi_{X_i}(x)$, $\pi_{X_i}(z)$). Let $w_1, \cdots, w_m$ be the vertices in a 
subpath of $\gamma$ starting in the $4K$-ball at $x_i$
and ending in the $4K$-ball at $z_i$.
Set $y_j=\pi_{X_i}(w_j)$ then $y_1, \cdots, y_m$
is a (coarse) path in $X_i$, starting in the $4K$-ball around
$x_i$ and ending in the $4K$-ball around $z_i$
by Lemma \ref{complex_lipschitz}.
(Here a coarse path means that $|y_j - y_{j+1}| \le \theta$.)
By the bottleneck property of $X_i$, 
$[x_i,z_i]$ is contained in the $L$-Hausdorff neighborhood of 
the path $\{y_i\}$, where $L$ depends only on $K, D$.

But we now show that the path $\{y_j \}$ is contained in the $4K$-Hausdorff
neighborhood of the path $\{ w_i\}$.
To see that fix a vertex $y_j$. If $d_{X_i}(w_1,w_j) > \theta$, 
apply Lemma \ref{lem:far_from_proj} (2)  
to $w_1, w_j, X_i$,  then $y_j$ is contained in the $4K$-neighborhood
of the path between $w_1,w_j$.
Otherwise $d_{X_i}(w_j, w_m) > \theta$, then 
we apply the lemma to $w_j,w_m$ and we are done too. 

In conclusion, $[x_i,z_i]$ is contained in the $(4K+L)$-neighborhood
of $\gamma$.
We are left with the case that $X_i=X_0$ (or $X_n$).
But if $|x_0-z_0| \le  2 \theta$ then $[x_0,z_0]$ is 
contained in the $2\theta$-neighborhood of $\gamma$, or 
the argument is same as above.
\end{proof}

We now observe that $\cC_K(\bY)$ is a \emph{tree-graded space}. This notion was introduced in \cite{DrutuSapir} where tree-graded spaces arise as asymptotic cones of relatively hyperbolic groups, but a simpler example (which is more relevant for us) are Cayley graphs of free products $A*B$ with respect to a generating set contained in $A\cup B$, which are tree-graded with respect to the copies of the Cayley graphs of $A$ and $B$ that they contain.

A geodesic metric space $X$ is said to be tree-graded with respect to the collection of geodesic subspaces $\mathcal P$, called \emph{pieces}, if distinct elements of $\mathcal P$ intersect in at most one point, and every simple loop in $X$ is contained in some $P\in\mathcal P$.

\begin{thm}\label{thm:tree_graded}
 Let $K\geq 4\theta$. Then there exists $C$ so that $\cC_K(\bY)$ is $(C,C)$--quasi-isometric to a tree-graded space each of whose pieces is $(C,C)$--quasi-isometric to some $Y\in\bY$.
\end{thm}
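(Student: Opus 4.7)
The plan is to invoke the generalized bottleneck criterion for tree-graded spaces from \cite{Hume:tree_graded}, which characterizes spaces quasi-isometric to tree-graded spaces in terms of a bottleneck-type condition relative to a prescribed family of subspaces that become the pieces. The pieces will be the elements of $\bY$, which are isometrically embedded pairwise disjoint subsets of $\cC_K(\bY)$ by construction (the metric $\rho$ is $\infty$ between different spaces), and the candidate canonical quasi-geodesics between two vertices will be the standard paths introduced just before Theorem \ref{thm.CY.quasi.tree}.

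First I would record that standard paths are uniform quasi-geodesics. Each standard path from $x$ to $z$ alternates between geodesic arcs $[x_i,z_i]$ inside the $X_i\in \bY_K(x,z)\cup\{X,Z\}$ and length-$K$ connecting edges, and its length $\sum_i |x_i-z_i|+nK$ matches the upper bound established in Theorem \ref{thm:complex_dist} and is comparable to $d_\cC(x,z)$ via the lower bound in the same theorem and Corollary \ref{distance}.

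Next I would establish the piece-by-piece penetration property that Hume's criterion requires: for every $X_i$ appearing in the standard decomposition, any path $\gamma$ from $x$ to $z$ must enter a $(3K{+}K)$-neighborhood of $\pi_{X_i}(x)$ and of $\pi_{X_i}(z)$ (the extra $K$ accounts for the diameter of these sets), and the coarse projection of $\gamma$ to $X_i$ between these entry/exit times yields a coarse path in $X_i$ from $\pi_{X_i}(x)$ to $\pi_{X_i}(z)$. The penetration is Lemma \ref{lem:far_from_proj}(2), and the coarse path statement is Lemma \ref{complex_lipschitz}. This is precisely the behaviour that distinguishes tree-graded geometry: each piece is visited in a well-defined, essentially unique "traversal", and otherwise the path must exit the piece to proceed.

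The main obstacle I anticipate is lining up these ingredients with the exact hypotheses of Hume's criterion, most notably the uniform coarse intersection condition for pieces. This should be easy here: distinct $Y,Y'\in\bY$ are disjoint in $\cC_K(\bY)$, and the only way between them is through the length-$K$ edges corresponding to their mutual projections, whose footprints in $Y$ and $Y'$ have $\rho_Y$- and $\rho_{Y'}$-diameter bounded by $\theta$ by (SP 4). With the bottleneck property relative to the pieces in hand, and noting that the combinatorics of which pieces a standard path traverses is encoded by the standard paths of $\cP_K(\bY)$ (which is itself a quasi-tree by Theorem \ref{quasi-tree}), Hume's theorem produces a $(C,C)$-quasi-isometry from $\cC_K(\bY)$ onto a tree-graded space whose pieces are uniformly $(C,C)$-quasi-isometric to the corresponding $Y\in\bY$, which is exactly the conclusion.
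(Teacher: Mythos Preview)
Your proposal is correct and follows essentially the same route as the paper: verify Hume's relative bottleneck criterion using Lemma~\ref{lem:far_from_proj} (together with Lemma~\ref{complex_lipschitz}) and then invoke \cite[Theorem~1]{Hume:tree_graded}. The paper's proof is terser---it simply points to Lemma~\ref{lem:far_from_proj} and says the verification goes as in \cite[Proposition~2.8]{Hume:tree_graded}---but your more explicit unpacking of the quasi-geodesic property of standard paths and the piece-penetration argument is exactly what that verification amounts to.
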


\begin{proof}
 Using Lemma \ref{lem:far_from_proj}, 
 one can prove the relative bottleneck property from \cite{Hume:tree_graded} just as in \cite[Proposition 2.8]{Hume:tree_graded}, so that the conclusion follows from \cite[Theorem 1]{Hume:tree_graded}.
\end{proof}

We now collect some immediate consequences of the theorem and elementary properties of tree-graded spaces.

\begin{cor}\label{CY.quasi.tree}
 Let $K\geq 4\theta$. Then:
 \begin{enumerate}
  \item If, for some $C$, each $Y\in\bY$ is $(C,C)$--quasi-isometric to a tree, then $\cC_K(\bY)$ is a quasi-tree.
  \item If, for some $\delta$, each $Y\in\bY$ is $\delta$--hyperbolic, then $\cC_K(\bY)$ is hyperbolic.
  \item $\cC_K(\bY)$ is hyperbolic relative to $\bY$ (more precisely, the family of copies of the $Y\in\bY$ that it contains).\footnote{In the terminology of \cite{DrutuSapir}, $\cC_K(\bY)$ is asymptotically tree-graded with respect to $\bY$.}
 \end{enumerate}
\end{cor}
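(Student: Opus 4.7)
The plan is to derive all three items as essentially immediate consequences of Theorem \ref{thm:tree_graded}, combined with standard facts about tree-graded spaces. Let $X$ be the tree-graded space $(C,C)$-quasi-isometric to $\cC_K(\bY)$ given by that theorem, with pieces each $(C,C)$-quasi-isometric to some $Y\in\bY$. All three conclusions will be deduced from corresponding properties of $X$ and then transported across the quasi-isometry.

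For (3), I would invoke the Drutu--Sapir correspondence between (asymptotic) tree-gradedness and relative hyperbolicity: since $\cC_K(\bY)$ is quasi-isometric to the tree-graded space $X$ whose pieces correspond to the copies of elements of $\bY$, the relative hyperbolicity of $\cC_K(\bY)$ with respect to the family $\bY$ is a direct citation, as indicated in the footnote. Nothing beyond Theorem \ref{thm:tree_graded} is needed.

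For (2), the key general lemma is that a tree-graded space whose pieces are uniformly $\delta$-hyperbolic is itself hyperbolic. The idea is that any geodesic in a tree-graded space decomposes canonically as a concatenation of geodesic segments, each contained in a single piece, glued together through cut-points in a tree-like manner. Combined with uniform thinness of triangles inside each piece, this yields uniform thinness of geodesic triangles in $X$. Since hyperbolicity is a quasi-isometry invariant for geodesic spaces, $\cC_K(\bY)$ is hyperbolic.

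For (1), the cleanest approach is to upgrade $X$ to an actual $\R$-tree by uniform quasi-isometric replacement of pieces. Since each piece is $(C,C)$-quasi-isometric to an $\R$-tree, I would choose such quasi-isometries (consistently on the at most one intersection point of two distinct pieces, which is trivial since single points map to single points) to build a new tree-graded space $X'$ whose pieces are $\R$-trees and which is uniformly quasi-isometric to $X$. But any simple loop in a tree-graded space must lie in a single piece, and an $\R$-tree contains no non-trivial simple loops, so $X'$ itself is an $\R$-tree. Hence $\cC_K(\bY)$ is quasi-isometric to an $\R$-tree, i.e., a quasi-tree. The main (mild) obstacle across these items is the bookkeeping in (1) for the piecewise replacement, but this is routine given that pieces meet in at most one point; everything else is formal once Theorem \ref{thm:tree_graded} is in hand.
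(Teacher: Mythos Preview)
Your proposal is correct, and for items (2) and (3) it is essentially the same as the paper's: both invoke Theorem \ref{thm:tree_graded} and then appeal to standard facts about tree-graded spaces, with (3) being a direct citation of Dru\c{t}u--Sapir and (2) reducing to thinness of triangles piece by piece.

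For item (1) you take a genuinely different route. The paper verifies the bottleneck property directly for the tree-graded model $X$: given a geodesic $\gamma$ and an arbitrary path $\alpha$ with the same endpoints, one reduces to $\alpha$ injective and then uses that every simple loop formed from a subpath of $\gamma$ and a subpath of $\alpha$ lies in a single piece, where the uniform bottleneck hypothesis on pieces applies. This argument has the advantage of being uniform with the proof of (2), which is literally the same with $\alpha$ a concatenation of two geodesics. Your approach instead replaces each piece by an actual $\R$-tree and reassembles along the cut-point structure to get an honest tree $X'$; since a tree-graded space with $\R$-tree pieces has no nontrivial simple loops, $X'$ is an $\R$-tree. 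This is a clean and conceptually pleasant alternative, though it does not adapt to (2) the way the paper's argument does, and the ``routine bookkeeping'' you mention requires a small check: when a geodesic traverses many pieces, the additive constants from the piecewise quasi-isometries must not accumulate uncontrollably, which is handled by observing that each intermediate piece contributes a definite amount to the length.
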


\begin{proof}
All properties follow from the analogous statements about tree-graded spaces, as outlined below.

 It is readily checked that if the pieces of a tree-graded space $X$ satisfy the bottleneck property uniformly, then the same holds for the whole space. In fact, consider a geodesic $\gamma$ in $X$ and a path $\alpha$ with the same endpoint. Up to replacing $\alpha$ with a path whose image is contained in $\alpha$, we can assume that $\alpha$ is injective. The conclusion that $\gamma$ is contained in a uniform neighborhood of $\alpha$ now easily follows from the fact that any simple loop consisting of a subpath of $\gamma$ and a subpath of $\alpha$ is contained in a piece.
 
 The proof that if the pieces of a tree-graded space $X$ are hyperbolic then $X$ is hyperbolic follows from a similar argument, where $\alpha$ is now a concatenation of two geodesics.
 
 Finally, tree-graded spaces are hyperbolic relative to their pieces by \cite[Theorem 3.30]{DrutuSapir}.
\end{proof}

\subsection{Examples}
Using Theorem\ref{thm:complex_acylind} 
we briefly discuss the acylindricity of the action 
on quasi-trees of metric spaces for  examples in 
\ref{sec.acyl}.

First, we use the same setting as in the section \ref{sec.mcg.acyl}.
Notice that action of $MCG(\Sigma)$ on $\cC_K(\bY)$ is not acylindrical because the the action of the stabilizer of any $\cC(Y)\in\bY$ has infinite kernel (generated by a Dehn twist around a boundary component of $Y$).

Next, in the same setup as in Theorem \ref{qtree.acyl},
again without assuming that an axis exists for $f$ that is 
preserved by $EC(f)$,
we also obtain $\cC_K(\bY)$, 
on which $G$ acts.
By  Corollary \ref{CY.quasi.tree}, $\cC_K(\bY)$ is a quasi-tree.
 
This action is also acylindrical.
Although $f$ fixes a point $\gamma$ in $\cP_K(\bY)$, 
$f$ is a hyperbolic isometry on $\cC_K(\bY)$
with an axis $\gamma$ ($\gamma$ is a subset in 
$\cC_K(\bY)$ that is invariant by $f$.
Moreover $\gamma$ is a geodesic in $\cC_K(\bY)$, which 
easily follows from Lemma \ref{complex_lipschitz}).
 
 We record it as a theorem. A similar statement 
 appears as \cite[Theorem H]{BBF}, which is weaker since 
 it is only stated  that $f$ is WPD
 in $\cC_K(\bY)$, but not 
 the acylindricity of the action.
 
 \begin{thm}
 Make the same assumptions as in Theorem \ref{qtree.acyl}.
 Then, for a sufficiently large $K$,
 $\cC_K(\bY)$ is a quasi-tree on which $G$ acts acylindrically such that 
 the given hyperbolic, WPD element $f$ with an axis $\gamma$,
 is hyperbolic with an axis $\gamma$ in $\cC_K(\bY)$.
 \end{thm}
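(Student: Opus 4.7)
The plan is to reduce the statement to Corollary \ref{CY.quasi.tree}(1) and Theorem \ref{thm:complex_acylind}, applied in the setting of Theorem \ref{qtree.acyl}. By that theorem the projection distances $\{d_Y^\pi\}$ already satisfy (P0)--(P4); invoking Theorem \ref{thm:forcing} and Lemma \ref{lem:stable} we replace the $\pi_Y$ by the forcing-sequence projections $\pi_Y'$ and, enlarging $\theta$ if necessary, obtain both (P1') and (SP 1)--(SP 5). Fix $K\ge 4\theta$. Each $Y\in\bY$ is a $G$-translate of $\gamma$, hence isometric to $\R$ and a fortiori a tree, so Corollary \ref{CY.quasi.tree}(1) yields that $\cC_K(\bY)$ is a quasi-tree.

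To apply Theorem \ref{thm:complex_acylind} I verify its two hypotheses. The convenience assumption that $EC(f)$ leaves $\gamma$ set-wise invariant, together with the definition of $EC(f)$, forces $EC(f)$ to be exactly the set-wise stabilizer of $\gamma$ in $G$ (if $g\gamma=\gamma$ then $g\gamma$ has Hausdorff distance $0$ to $\gamma$, so $g\in EC(f)$). Hence $\mathrm{Stab}(Y)=h\,EC(f)\,h^{-1}$ whenever $Y=h\gamma$, and its action on $Y$ is conjugate to the proper (and so acylindrical) action of the virtually cyclic group $EC(f)$ on $\R$; the resulting constants are uniform by equivariance. For the bounded common-stabilizer condition I take $N=2$: if $g$ fixes two distinct $Y_1=h_1\gamma$ and $Y_2=h_2\gamma$ and has infinite order, then each $h_i^{-1}gh_i\in EC(f)$ has infinite order and therefore acts on $\gamma$ as a nontrivial translation, so $Y_i$ lies at finite Hausdorff distance from the axis of $g$ in $X$; but then $Y_1$ and $Y_2$ are at finite Hausdorff distance from each other, forcing $h_1^{-1}h_2\in EC(f)$ and $Y_1=Y_2$, a contradiction. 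Any such common stabilizer is therefore a finite subgroup of a virtually cyclic group, whose order is bounded by the index of a cyclic subgroup of finite index in $EC(f)$; this index is conjugation-invariant, giving a uniform bound $B$.

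Finally, $\gamma\in\bY$ embeds in $\cC_K(\bY)$ as an $f$-invariant set. For $x,z\in\gamma$ with $d_\gamma(x,z)\ge\theta$, Lemma \ref{complex_lipschitz} gives $d_\gamma(x,z)\le d_\cC(x,z)$, while $d_\cC(x,z)\le d_\gamma(x,z)$ is immediate from $\gamma\subseteq\cC_K(\bY)$; for $d_\gamma(x,z)<\theta$ any detour leaving $\gamma$ must traverse at least two external edges of length $K$, and since $2K>d_\gamma(x,z)$ no such detour can be shorter. Hence $\gamma$ is isometrically embedded and is a geodesic in $\cC_K(\bY)$, on which $f$ acts by translation of positive length, so $f$ is a hyperbolic isometry with axis $\gamma$. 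I expect the most delicate step to be the common-stabilizer reduction: it relies on identifying $EC(f)$ with the set-wise stabilizer of $\gamma$ and on the strongly contracting property (which forces distinct translates of $\gamma$ to be at infinite Hausdorff distance), after which the uniform cardinality bound comes automatically from virtual cyclicity.
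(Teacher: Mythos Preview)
Your approach matches the paper's: quasi-tree via Corollary \ref{CY.quasi.tree}, acylindricity via Theorem \ref{thm:complex_acylind}, and the axis statement via Lemma \ref{complex_lipschitz}. The paper's own argument is only a brief paragraph preceding the theorem and does not spell out the verification of the hypotheses of Theorem \ref{thm:complex_acylind}, so your added detail is welcome rather than divergent.

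Two small points deserve tightening. First, in the common-stabilizer step you invoke ``the axis of $g$ in $X$'', but in a general metric space a hyperbolic isometry need not have a unique axis, so the inference that $Y_1$ and $Y_2$ are at finite Hausdorff distance does not follow as stated. The clean argument uses strong contraction directly: since $g$ preserves both $Y_1$ and $Y_2$, the set $p_{Y_1}(Y_2)$ is $g$-invariant; it is also bounded (because $Y_1\neq Y_2$), yet $g$ acts on $Y_1\cong\R$ by a nontrivial translation, which is impossible. You allude to this mechanism in your final sentence, so this is a matter of rewriting rather than a missing idea. Second, in the axis paragraph you apply Lemma \ref{complex_lipschitz} under the hypothesis $d_\gamma(x,z)\ge\theta$, but the lemma requires $d_\cC(x,z)\ge\theta$. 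The fix is immediate: if $d_\cC(x,z)<\theta<K$ then no $\cC$-geodesic can use an external edge, so it stays in $\gamma$ and $d_\cC=d_\gamma$; otherwise Lemma \ref{complex_lipschitz} applies. This is exactly the ``easily follows from Lemma \ref{complex_lipschitz}'' that the paper asserts.
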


\bibliographystyle{alpha}
\bibliography{./ref}

\end{document}